\documentclass[10pt,reqno]{amsart}
 \RequirePackage{amsthm,amsmath}
 \RequirePackage[numbers]{natbib}
 \RequirePackage[colorlinks,linkcolor=blue,citecolor=blue,urlcolor=blue]{hyperref}
 \usepackage{amssymb}
 \usepackage{anysize}
 \usepackage{hyperref}
 \usepackage{extarrows}
 \usepackage{multirow}
 \usepackage{stmaryrd}
\usepackage{newlfont}
 \usepackage{color}
 \usepackage{amsmath}
 \usepackage[french]{babel}
 \usepackage{enumitem}
 \usepackage{mathtools} 
\usepackage{dsfont} 

\allowdisplaybreaks
\usepackage{soul}

\usepackage{authblk}
\usepackage{changes}
\usepackage{lipsum}
 
\numberwithin{equation}{section}

\numberwithin{equation}{section}
\newtheorem{theorem}{Theorem}[section]
\newtheorem{lemma}[theorem]{Lemma}
\newtheorem{proposition}[theorem]{Proposition}
\newtheorem{definition}[theorem]{Definition}

\newtheorem{assumption}[theorem]{Assumption}

\renewcommand{\Re}{\mathrm{Re}\,}
\renewcommand{\Im}{\mathrm{Im}\,}

\newcommand{\im}{\mathrm{Im}\,}

\newcommand{\E}{{\mathbb E }}
\newcommand{\R}{{\mathbb R }}
\newcommand{\N}{{\mathbb N}}

\renewcommand{\P}{{\mathbb P}}
\newcommand{\C}{{\mathbb C}}
\newcommand{\mr}{\mathfrak m}
\newcommand{\ii}{\mathrm{i}}
\newcommand{\deq}{\mathrel{\mathop:}=}

\newcommand{\dd}{\mathrm{d}}
\newcommand{\ie}{\emph{i.e., }}
\newcommand{\eg}{\emph{e.g., }}

\newcommand{\g}{\mathfrak{g}}
\newcommand{\bs}{\boldsymbol}

\def\ea{e_{0}(\lambda)}
\def\Tr{\mathrm{Tr}}
\def\i{\mathrm{i}}

\def\pzab{\frac{\partial^2}{\partial z_1 \partial z_2}}
\def\pza{\frac{\partial}{\partial z_1}}
\def\pzb{\frac{\partial}{\partial z_2}}
\def\pzaa{\frac{\partial}{\partial \overline{z_1}}}
\def\pzbb{\frac{\partial}{\partial \overline{z_2}}}
\def\pz{\frac{\partial}{\partial z}}
\def\pzp{\frac{\partial}{\partial z'}}
\def\pzz{\frac{\partial}{\partial \overline{z}}}
\def\pzzp{\frac{\partial}{\partial \overline{z'}}}
\def\px{\frac{\partial}{\partial x}}
\def\py{\frac{\partial}{\partial y}}

\def\tf{\tilde{f}}
\def\tg{\tilde{g}}

\renewcommand{\mathbf}[1]{\bs{#1}}
 
\marginsize{1in}{1in}{1in}{1in}

\begin{document}

\begin{center}

 \begin{minipage}{0.85\textwidth}
 \vspace{2.5cm}
 
\begin{center}
\large\bf
Central limit theorem for mesoscopic eigenvalue statistics of deformed Wigner matrices and sample covariance matrices
\end{center}
\end{minipage}
\end{center}

\renewcommand{\thefootnote}{\fnsymbol{footnote}}	
\vspace{1cm}
\begin{center} \begin{minipage}{1.1\textwidth}

 \begin{minipage}{0.33\textwidth}
\begin{center}
Yiting Li  \\
		\footnotesize 
		{KTH Royal Institute of Technology}\\
{\it yitingl@kth.se}
\end{center}
\end{minipage}
\begin{minipage}{0.33\textwidth}
	\begin{center}
		Kevin Schnelli\footnotemark[3]\\
		\footnotesize 
		{KTH Royal Institute of Technology}\\
		{\it schnelli@kth.se}
	\end{center}
\end{minipage}
\begin{minipage}{0.33\textwidth}
\begin{center}
Yuanyuan Xu\footnotemark[2]\\
\footnotesize 
{KTH Royal Institute of Technology}\\
{\it yuax@kth.se}
\end{center}
\end{minipage}
\end{minipage}
\end{center}

\bigskip
\footnotetext[2]{Supported by  the G\"oran Gustafsson Foundation and the Swedish Research Council Grant VR-2017-05195.}
\footnotetext[3]{Supported in parts by the Swedish Research Council Grant VR-2017-05195.}

\renewcommand{\thefootnote}{\fnsymbol{footnote}}	

\vspace{1cm}

\begin{center}
 \begin{minipage}{0.83\textwidth}\footnotesize{
 {\bf Abstract.}
 We consider $N$ by $N$ deformed Wigner random matrices of the form $X_N=H_N+A_N$, where $H_N$ is a real symmetric or complex Hermitian Wigner matrix and $A_N$ is a deterministic real bounded diagonal matrix. We prove a universal Central Limit Theorem for the linear eigenvalue statistics of~$X_N$ for all mesoscopic scales both in the spectral bulk and at regular edges where the global eigenvalue density vanishes as a square root. The method relies on  studying the characteristic function of the linear statistics~\cite{character} by using the cumulant expansion method, along with local laws for the Green function of $X_N$~\cite{isotropic3, isotropic, locallaw} and analytic subordination properties of the free additive convolution~\cite{global2,global}. We also prove the analogous results for high-dimensional sample covariance matrices.
 }
\end{minipage}
\end{center}

\vspace{1em}
 
\begin{center}
 \begin{minipage}{0.83\textwidth}\footnotesize{
 {\bf R\'esum\'e.}
 Nous  consid\'erons des matrices al\'eatoires de Wigner d\'eform\'ees de taille $N$ de la forme $X_N = H_N + A_N$,
o\`u $H_N$ est une matrice hermitienne de Wigner sym\'etrique ou complexe r\'eelle, et $A_N$ est une 
matrice diagonale d\'eterministe avec des entr\'ees r\'eelles et born\'ees. Nous prouvons un th\'eor\`eme de limite centrale universel pour les statistiques lin\'eaires des valeurs propres de $X_N$ pour toutes les \'echelles m\'esoscopiques \`a la fois dans le centre de spectre et aux bords r\'eguliers o\`u
la densit\'e globale des valeurs propres disparait sous forme de racine carr\'ee. La m\'ethode repose sur l'\'etude de la fonction caract\'eristique des statistiques lin\'eaires~\cite{character} en utilisant la m\'ethode des cumulants, ainsi que les lois locales
pour la fonction de Green de $X_N$~\cite{isotropic3, isotropic, locallaw} et les propri\'et\'es de subordination analytique de la convolution libre additive~\cite{global2,global}. Nous prouvons \'egalement les r\'esultats analogues pour des matrices de corr\'elation empirique de haute dimension.
 }
\end{minipage}
\end{center}

 \vspace{5mm}
 
 {\small
\footnotesize{\noindent\textit{Date}: \today}\\
\footnotesize{\noindent\textit{Keywords}: linear eigenvalue statistics; deformed Wigner matrices; sample covariance matrices.}\\
 
 \vspace{2mm}

 }

\thispagestyle{headings}

\section{Introduction}

\subsection{Linear eigenvalue statistics of Wigner matrices}

A Wigner matrix $H_N$ is an $N \times N$ real symmetric or complex Hermitian random matrix with independent entries up to the constraint $H_N=H_N^*$. In the case the entries are Gaussian random variables, these matrices belong to the Gaussian Orthogonal Ensemble (GOE), Gaussian Unitary Ensemble (GUE), respectively. Wigner \cite{wigner_annals} proved the semicircle law stating that the empirical eigenvalue distribution of $H_N$ converges to the semicircle distribution with density $\rho_{sc}(x)=\frac{1}{2 \pi} \sqrt{4-x^2} \mathbf{1}_{[-2,2]}$. That is, for any test function $f \in C_c(\R)$,
\begin{align*}
\frac{1}{N}\sum\limits_{i=1}^Nf(\lambda_i) \to \int_{\R}f(x) \rho_{sc}(x) \dd x\quad\mbox{as }N\to \infty, 
\end{align*}
in probability, which can be understood as a Law of Large Numbers.

Johansson \cite{Johansson} obtained the corresponding Central Limit Theorem (CLT) for such linear eigenvalue statistics of the GUE, i.e.,
\begin{align}\label{linear_statistics_for_Wigner}
\sum\limits_{i=1}^Nf(\lambda_i)- N \int_{\R} f(x)\rho_{sc}(x)\dd x
\end{align}
converges in distribution to a centered Gaussian random variable. Strikingly different from the classical CLT, the linear statistics need not be normalized by $N^{-\frac{1}{2}}$, which is a manifestation of the strong eigenvalue correlations.  Bai and Yao \cite{baiwigner} used a martingale method to prove such CLTs for Wigner matrices and analytic test functions. Lytova and Pastur \cite{lytova+pastur}, and Shcherbina \cite{M.Shcherbina} improved these results by weakening the regularity conditions on the test functions. More recently, Sosoe and Wong~\cite{SosoeWong} obtained CLTs for Wigner matrices with $H^{1+\epsilon}$ test functions using Littlewood--Paley decompositions.

Boutet de Monvel and Khorunzhy initiated the study of mesoscopic linear eigenvalue statistics, i.e., the derivation of Gaussian fluctuations for the random variable 
\begin{align}\label{eq1}
\sum\limits_{i=1}^Nf\Big(\frac{\lambda_i-E_0}{\eta_0}\Big)-\mathbb E\Big[\sum\limits_{i=1}^Nf\Big(\frac{\lambda_i-E_0}{\eta_0}\Big)\Big],
\end{align}
with fixed $E_0 \in (-2,2)$ on mesoscopic scales $N^{-1} \ll \eta_0 \ll 1$. In \cite{meso1,meso2}, they obtained CLTs for the test function $(x-\ii)^{-1}$ on all mesoscopic scales for the GOE, and  $N^{-\frac{1}{8}} \ll \eta_0 \ll 1$ for symmetric Wigner matrices, respectively. Lodhia and Simm \cite{mesowigner} extended the CLT for complex Wigner matrices and general test functions on scales $N^{-1/3}\ll\eta_0\ll1$. He and Knowles \cite{moment} used moment estimates for Green functions to prove the CLTs for all arbitrary Wigner matrices on the optimal scales $N^{-1}\ll\eta_0\ll1$. More recently, Landon and Sosoe \cite{character} obtained similar CLT by means of the characteristic function.

Mesoscopic central limit theorems are important tools in the theory of homogenization of Dyson Brownian motion (DBM) introduced by Bourgade, Erd\H{o}s, Yau and Yin~\cite{BEYY} to prove fixed energy universality of local eigenvalue statistics of Wigner matrices.  Landon, Sosoe and Yau \cite{character2} subsequently derived a mesoscopic CLT to show fixed energy universality of the DBM. The dynamical approach using Dyson Brownian motion to prove the universality of the eigenvalue statistics on microscopic scale for all symmetry classes was initiated by Erd\H{o}s, Schlein and Yau in~\cite{ESY}; we refer to the surveys~\cite{bull_american_math_society,book} for further details.  Mesoscopic central limit theorems combined with DBM were used by Landon and Sosoe~\cite{character} and by Bourgade and Mody~\cite{logarithm} to derive Gaussian fluctuations of single eigenvalues, and in~\cite{logarithm, Bourgadegap} to show Gaussian fluctuations of the determinant of Wigner matrices.

Mesoscopic CLTs can also be studied at the spectral edges, where the mesoscopic scales are $N^{-\frac{2}{3}} \ll \eta_0 \ll 1$. For the GUE, Basor and Widom \cite{basor+widom} used asymptotics of the Airy kernel to prove mesoscopic CLTs at the edges. Min and Chen \cite{Min+Chen} subsequently considered edge CLTs for the GOE. Recently, Adhikari and Huang \cite{huang} obtained the mesoscopic CLT at the edges down to the optimal scale $\eta_0 \gg N^{-\frac{2}{3}}$ for the DBM.

\subsection{Deformed Wigner matrices}

In the present paper we are interested in deformed Wigner matrices. A deformed Wigner matrix is an $N \times N$ random matrix of the form
\begin{align}\label{deformed_Wigner_matrix}
X_N=H_N+A_N,
\end{align}
where $H_N$ is a real symmetric or complex Hermitian Wigner matrix and $A_N$ is a real deterministic diagonal matrix. It is also known as the Rosenzweig-Porter model in the physics literature. Suppose the empirical eigenvalue distribution of $A_N$ has a deterministic limiting measure, denoted by $\mu_{\alpha}$. It was shown by Pastur~\cite{solution} that the empirical eigenvalue distribution of $X_N$ converges weakly in probability to the free additive convolution of $\mu_{sc}$ and $\mu_\alpha$, denoted by $\mu_{fc}=\mu_{sc} \boxplus \mu_{\alpha}$; see also~\cite{Voi86}.

A CLT for the linear eigenvalue statistics with test functions in $C^2_c(\R)$ was obtained by Ji and Lee~\cite{global} under a one-cut assumption on $\mu_{fc}$. They also computed the expectation and variance in terms of~$\mu_{\alpha}$. Dallaporta and Fevrier \cite{global2} obtained the CLT for general~$\mu_{fc}$. Their results are summarized in Theorem~\ref{global} below.

In the present paper, we study the fluctuations of the linear eigenvalue statistics~\eqref{eq1} in the mesoscopic regime. We choose $\mu_{\alpha}$ properly such that the free additive convolution $\mu_{fc}$ is supported on a single interval and vanishes as a square root at the end-points. This edge behavior of the limiting eigenvalue distribution is quite common in random matrix theory, and sometimes referred to as regular edge. Denoting $\kappa_0=\kappa_0(E_{0})$ the distance from $E_0$ to the closest edge of the free convolution measure, we derive a CLT at energy~$E_0$ on scales $\eta_0$ with $N^{-1} \ll\eta_0\sqrt{\eta_0+\kappa_0}\leq 1$; see Theorem~\ref{thm:weak_convergence}. This range of $\eta_0$ covers the global scale as well as all mesoscopic scales up to the spectral edges. For energies $E_0$ in the bulk and at the edges respectively, we compute the variances and biases explicitly on the mesoscopic scales, where we recover the formulas for the Gaussian ensembles. This shows the expected universality of the linear eigenvalue fluctuations on mesoscopic scales. The universality of the eigenvalue statistics on the microscopic scale was derived for the deformed GUE in the bulk~\cite{edgesolution} and at the edge~\cite{edgesolution2}. For deformed Wigner matrices, the local bulk universality was obtained in~\cite{O;rourke_Vu} for a special class of $A_N$ using moment matching, under a one cut assumption in~\cite{bulk} and in~\cite{SchnelliErdoes,LandonYau} for the general case using the DBM methods. The edge universality was derived in~\cite{edge} using a Green function comparison method, and in~\cite{bulk, LandonYau_edge} using DBM. More recently, quantum unique ergodicity for deformed Wigner matrices was derived in \cite{quantum}.

In the proof of the main results, we follow the idea of~\cite{character} to compute the characteristic function of~\eqref{eq1} in combination with the Helffer-Sj\"ostrand formula and cumulant expansions; see \eqref{cumulant} below. Cumulant expansions were used in e.g.~\cite{meso2, moment, character} to study the linear eigenvalue statistics of random matrices. We also rely on local laws for Green functions \cite{isotropic3, isotropic, locallaw} and analytic subordination for the free convolution measure, as used in~\cite{global2,global,locallaw}.

On the global scale, the method used to derive the CLT for deformed Wigner matrices ~\cite{global2}  is insensitive to the behavior of the free convolution measure $\mu_{fc}$. An interesting aspect of the free additive convolution and deformed Wigner matrices is that the densities may show other edge behaviors than square roots. For such setups, one expects mesoscopic CLTs at the edges with different scalings, variances and biases. This is a main motivation for us to study linear eigenvalue statistics at spectral edges. The local eigenvalue statistics at such critical edges are only partly understood, see~e.g.~ \cite{gumbel, eigenvector} for some results. At cusp points in the interior of the bulk spectrum the universality of the local eigenvalue fluctuations was recently proved in~\cite{Cusp2, Cusp1}.

\subsection{Sample covariance matrix}
Sample covariance matrices form another class of archetypal random matrix models, with applications in multivariate statistical analysis. We consider the separable sample covariance matrices of the form $H=\Sigma^{1/2} X X^* \Sigma^{1/2}$, where $X$ is a $M \times N$ matrix with independent random variables, and $\Sigma^{1/2}$ is the square root of the $M \times M$ diagonal and positive definite matrix $\Sigma$. The dimensionality $M$ is chosen to be proportional to the sample size $N$. Assuming that the eigenvalue distribution of $\Sigma$ has a deterministic limit $\mu_\sigma$, it was proved by Marchenko and Pastur \cite{M+Pastur} that the spectral measure of $H$ approaches a deterministic probability measure. In the null case $\Sigma=I$, the limiting measure is called the Marchenko-Pastur distribution, $\mu_{MP}$. For the non-null case $\Sigma \neq I$, the limiting measure is given by the free multiplicative convolution of $\mu_{MP}$ and $\mu_\sigma$, denoted by $\mu_{MP} \boxtimes \mu_\sigma$, see \cite{freetimes2, freetimes, baibook}. A CLT for the fluctuations of the linear eigenvalue statistics was first studied by Jonsson \cite{Jonsson} for Wishart matrices where $X$ has Gaussian entries. CLTs for linear eigenvalue statistics with analytic test functions for general sample covariance matrices were then studied by Bai and Silverstein in \cite{baisample}. The regularity condition on the test functions was weakened by \cite{baisample2, lytova+pastur, M.Shcherbina} for the null case and \cite{Najim+Yao} for the non-null case. In the second part of this paper, we extend the techniques to derive corresponding CLTs for the mesoscopic eigenvalue statistics of sample covariance matrices; see Theorem \ref{meso_sample}.

\subsection{Related models}
Deformed Wigner matrices are closely related to Dyson Brownian motion, for which mesoscopic CLTs were obtained inside the bulk \cite{Duits+Johansson,character2,Huang+Landon} and at the regular edges \cite{huang}. The mesoscopic linear statistics were also studied for random band matrices \cite{erdos+knowles,erdos+knowles2}, sparse Wigner matrices~\cite{sparse}, mesoscopic eigenvalue density correlations for Wigner matrices~\cite{HeKnowlesDensityCorrelation}, invariant $\beta$-ensembles \cite{Bekerman+lodhia} and orthogonal polynomial ensembles \cite{Breuer+Duits}. The global fluctuations of the deformed GOE/GUE can also be studied using the framework of second order freeness \cite{secondfreeness}.

\subsection{Structure of this paper}

Section \ref{sec:model_and_results} contains the precise definitions, assumptions and the main results. The proof the main theorem is carried out in Section \ref{sec:strategy}-\ref{sec:proof_of_main_lemma}. In Section \ref{sec:proof_of_main_thm} and \ref{sec:expectation}, we compute the variances and the biases in the bulk and at the edges. In Section \ref{sec:sample_covariance}, we consider sample covariance matrices and obtain the corresponding results. Some auxiliary results are proved in the Appendices.

\subsection{Notation}  We denote the upper half-plane by $\C^+\deq\{z\in\C\,:\,\im z>0\}$ and the positive real line by $\R^+\deq\{x\in\R\,:\,x>0\}$. For any vector $v \in \C^{N}$, we use $\|v\|_2$ to denote the Euclidean norm. For a matrix $A \in \C^{N \times N}$, we denote by $\|A\|_{\mathrm{op}}$ its operator norm induced by the Euclidean vector norm. We use~$c,k$ and~$C,K$ to denote strictly positive constants that are independent of $N$. Their values may change from line to line. We use standard big O and small o notations. For $X, Y \in \R$, we write $X \sim Y$  if there exist constants $c, C>0$ such that $c |Y| \leq |X| \leq C |Y|$. We write $X \ll Y$ if there exists a small $\tau>0$ such that $|X| \leq N^{-\tau} |Y|$ for large $N$. We will use the following definition on high-probability estimates from~\cite{EKY}. 
\begin{definition}\label{definition of stochastic domination}
Let $\mathcal{X}\equiv \mathcal{X}^{(N)}$ and $\mathcal{Y}\equiv \mathcal{Y}^{(N)}$ be two sequences of nonnegative random variables. We say~$\mathcal{Y}$ stochastically dominates~$\mathcal{X}$ if, for all (small) $\epsilon>0$ and (large)~$D>0$,
\begin{align}\label{le proba est}
\P\big(\mathcal{X}^{(N)}>N^{\epsilon} \mathcal{Y}^{(N)}\big)\le N^{-D},
\end{align}
for sufficiently large $N\ge N_0(\epsilon,D)$, and we write $\mathcal{X} \prec \mathcal{Y}$ or $\mathcal{X}=O_\prec(\mathcal{Y})$.
\end{definition}
We often use the notation $\prec$ also for deterministic quantities, then~\eqref{le proba est} holds with probability one. Stochastic domination has the following properties.
\begin{lemma}\label{dominant}(Proposition 6.5 in \cite{book})
	\begin{enumerate}
		\item $X \prec Y$ and $Y \prec Z$ imply $X \prec Z$;
		\item If $X_1 \prec Y_1$ and $X_2 \prec Y_2$, then $X_1+X_2 \prec Y_1+Y_2$ and $X_1X_2 \prec Y_1Y_2;$
		\item  If $X \prec Y$, $\E Y \geq N^{-c_1}$ and $|X| \leq N^{c_2}$ almost surely with fixed constants $c_1$ and $c_2$, then we have $\E X \prec \E Y$.
	\end{enumerate}
\end{lemma}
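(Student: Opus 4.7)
The plan is to verify each of the three properties directly from Definition~\ref{definition of stochastic domination} by combining union bounds with the flexibility inherent in the definition: the bad-event bound holds for \emph{every} $\epsilon,D>0$, so the excess factor $N^{\epsilon}$ may be freely replaced by $N^{\epsilon/k}$ and the exponent $D$ by $D+1$ at no cost. Throughout, I would fix arbitrary $\epsilon>0$ and $D>0$ and exhibit a bound of the required form for all sufficiently large $N$.

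For part~(1), the key observation is the inclusion
\begin{equation*}
\{\mathcal{X}>N^{\epsilon}\mathcal{Z}\} \subseteq \{\mathcal{X}>N^{\epsilon/2}\mathcal{Y}\}\cup\{\mathcal{Y}>N^{\epsilon/2}\mathcal{Z}\},
\end{equation*}
so applying $\mathcal{X}\prec\mathcal{Y}$ and $\mathcal{Y}\prec\mathcal{Z}$ each with exponent $\epsilon/2$ and target $D+1$, followed by a union bound, yields $\P(\mathcal{X}>N^{\epsilon}\mathcal{Z})\leq 2N^{-D-1}\leq N^{-D}$. Part~(2) is almost identical: for the sum one uses
\begin{equation*}
\{X_1+X_2>N^{\epsilon}(Y_1+Y_2)\}\subseteq\{X_1>N^{\epsilon}Y_1\}\cup\{X_2>N^{\epsilon}Y_2\},
\end{equation*}
and for the product
\begin{equation*}
\{X_1X_2>N^{\epsilon}Y_1Y_2\}\subseteq\{X_1>N^{\epsilon/2}Y_1\}\cup\{X_2>N^{\epsilon/2}Y_2\},
\end{equation*}
which uses the nonnegativity of $X_i,Y_i$. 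A union bound finishes both cases.

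For part~(3), I would split the expectation over the good and bad events:
\begin{equation*}
\E X = \E\bigl[X\,\mathbf{1}_{\{X\leq N^{\epsilon}Y\}}\bigr] + \E\bigl[X\,\mathbf{1}_{\{X>N^{\epsilon}Y\}}\bigr].
\end{equation*}
The first term is at most $N^{\epsilon}\E Y$. For the second, the deterministic bound $|X|\leq N^{c_2}$ together with $\P(X>N^{\epsilon}Y)\leq N^{-D}$ gives at most $N^{c_2-D}$. Choosing $D=D(c_1,c_2,\epsilon)$ large enough so that $N^{c_2-D}\leq N^{\epsilon-c_1}\leq N^{\epsilon}\E Y$, we conclude $\E X\leq 2N^{\epsilon}\E Y$ for all sufficiently large $N$. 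Since $\epsilon>0$ was arbitrary, this is exactly $\E X\prec\E Y$.

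I do not anticipate a serious obstacle: this is a textbook verification of a robust high-probability notion. The only point that requires a little care is in part~(3), where the deterministic bound $|X|\leq N^{c_2}$ must be absorbed by taking $D$ in Definition~\ref{definition of stochastic domination} sufficiently large compared to $c_2$; this step is precisely what forces the hypothesis $\E Y\geq N^{-c_1}$, which guarantees that the polynomial tail $N^{c_2-D}$ can be made negligible relative to $\E Y$.
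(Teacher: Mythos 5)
Your proof is correct; the paper itself does not prove this lemma but simply cites Proposition~6.5 of the referenced book, and your argument is the standard textbook verification (union bound after halving the $N^{\epsilon}$ factor for parts (1) and (2), and splitting the expectation over the good and bad events for part (3)), which is exactly what that reference does. The one point worth being explicit about — which you do handle — is that in part (3) the hypothesis $\E Y\ge N^{-c_1}$ is precisely what lets the tail contribution $N^{c_2-D}$ be absorbed into $N^{\epsilon}\E Y$ after taking $D$ large relative to $c_1+c_2$.
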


\section{Model and main results}\label{sec:model_and_results}

\subsection{Model and assumptions}

Let $H \equiv H_N$ be an $N\times N$ real or complex Wigner matrix satisfying the following assumption.
\begin{assumption}\label{assumption_1_deformed}
	For a real $(\beta=1)$ symmetric Wigner matrix $H$ we assume that:
	\begin{enumerate}
		\item $\{H_{ij}|i\le j\}$ are independent real-valued centered random variables with $H_{ij}=H_{ji}$.
		\item  For $i \neq j$, $\E [(\sqrt{N} H_{ij})^2]=1$; $\E [(\sqrt{N} H_{ii})^2]=m_2$ for some constant $m_2>0$. In addition, $\E [(\sqrt{N} H_{ij})^4]=W_4$ for some constant $W_4>0$.
		\item All entries have uniform sub-exponential decay, that is, there exist $C_0>0$ and $\theta>1$ such that
		\begin{align}\label{moment condition}\P\Big( |\sqrt{N} H_{ij}| \geq x \Big) \leq C_0 e^{-x^{\frac{1}{\theta}}},\quad\forall i,j\,.\end{align}
		In particular, we have 
		\begin{align}\label{moment condition finite}
		\E[|\sqrt{N} H_{ij}|^p] \leq C (\theta p)^{\theta p}~ (p \geq 3)\,.\end{align}
	\end{enumerate}
	For complex $(\beta=2)$ Hermitian Wigner matrix we assume that:
	\begin{enumerate}
		\item $\{{\Re}H_{ij},{\Im}H_{ij}|i\le j\}$ are independent centered real-valued random variables with $H_{ij}=\overline{H_{ji}}$.
		\item For $i \neq j$, $\E [H_{ij}^2]=0$ and $\E [(\sqrt{N} |H_{ij}|)^2]=1$; $\E [(\sqrt{N} |H_{ii}|)^2]=m_2$ for some constant $m_2>0$. In addition, $\E [(\sqrt{N} |H_{ij}|)^4]=W_4$ for some constant $W_4>0$.
		\item The sub-exponential tail assumption in~\eqref{moment condition} holds.
	\end{enumerate}
\end{assumption}

Let $\{A_N\}=\mathrm{Diag}(a_i)$ be a sequence of real deterministic diagonal $N \times N$ matrices with $\|A\|_{\mathrm{op}}$ uniformly bounded in $N$. The empirical spectral measure of $A_N$ is defined by $\mu_A\deq\frac{1}{N} \sum_{i=1}^{N} \delta_{a_i}$.

For a probability measure $\nu$ on $\R$ denote by $m_\nu$ its Stieltjes transform, i.e.
\begin{align}
 m_\nu(z)\deq\int_\R\frac{\dd\nu(x)}{x-z}\,,\qquad z\in\C^+\,.
\end{align}
Note that $m_{\nu}\,:\C^+\rightarrow\C^+$ is analytic and can be analytically continued to the real line outside the support of $\nu$. Moreover, $m_{\nu}$ satisfies $\lim_{\eta\nearrow\infty}\ii\eta {m_{\mu}}(\ii \eta)=-1$. Conversely, if $m\,:\,\C^+\rightarrow\C^+$ is an analytic function with $\lim_{\eta\nearrow\infty}\ii\eta m(\ii\eta)=-1$, then $m$ is the Stieltjes transform of a probability
measure $\nu$, \ie $m(z) = m_{\nu}(z)$, for all $z\in\C^+$; see \eg~\cite{Ak}.

The following assumption ensures the existence of the weak limiting measure of $\mu_A$.

\begin{assumption}\label{assumption_2_deformed}
	There exists a deterministic and compactly supported probability measure denoted as~$\mu_{\alpha}$, such that $\mu_A$ converges weakly to $\mu_{\alpha}$. In addition, there exists $\alpha_0>0$ such that for any fixed compact set $D_0 \subset \C^+\cup\R$ with $D_0 \cap \mathrm{supp} (\mu_{\alpha}) =\emptyset$, 
	\begin{equation}\label{levy}
	\max_{z \in D_0} \Big| \int_\R \frac{\dd \mu_{A}(x)}{x-z} -\int_\R \frac{\dd \mu_{\alpha}(x)}{x-z} \Big| =O(N^{-\alpha_0}),
	\end{equation}
	for sufficiently large $N$.
	\end{assumption}

Define the deformed Wigner matrix as
$$X_N\deq A_N+H_N.$$
The eigenvalues of $X_N$ are denoted as $\lambda_i \in \R$. The empirical spectral measure of $X_N$ is defined by~$\mu_N(x)=\frac{1}{N} \sum_{i=1}^N \delta_{\lambda_i}$. For $z\in\C^+$, we introduce the Green function, $G(z)$, and its normalized trace as
$$G(z)\deq (X_N-zI)^{-1}\qquad m_N(z)\deq\frac{1}{N} \Tr\, G(z)=\int_{\R} \frac{\dd \mu_N(\lambda)}{\lambda-z}\,,$$
i.e., $m(z)\equiv m_N(z)$ is the Stieltjes transform of $\mu_N$.

The empirical spectral distribution $\mu_N$ converges as $N$ tends to infinity to the free additive convolution of $\mu_\alpha$ and the standard semicircle law, denoted by $\tilde \mu_{fc}:=\mu_\alpha \boxplus \mu_{sc}$. The free convolution measure can be described by analytic subordination~\cite{Bia98, Voi93}: Its Stieltjes transform, $\tilde{m}_{fc}$, is the unique solution to the Pastur equation
	\begin{equation}\label{self6}
	\tilde{m}_{fc}(z)=\int_\R \frac{1}{a-z-\tilde{m}_{fc}(z)} \dd \mu_{\alpha}(a)\,,
	\end{equation}
subject to the constraint $\im \tilde m_{fc}(z)>0$, $z\in\C^+$. 

Since the convergence speed in~\eqref{levy} can be very slow, we work with a finite $N$ version of the free convolution measure. Let $\mu_{fc}:=\mu_{A} \boxplus \mu_{sc}$. The Stieltjes transform of $\mu_{fc}$, denoted by $m_{fc}$, is hence the unique solution to
 \begin{equation}\label{self}
	m_{fc}(z)=\frac{1}{N} \sum_{i=1}^N \frac{1}{a_i-z-m_{fc}(z)}\,,
	\end{equation}
such that $\Im m_{fc}(z)>0$, $z\in\C^+$. Note that $\mu_{fc}$ depends on $N$, but is deterministic.

 Biane~\cite{free2} proved that $\widetilde\mu_{fc}$ and $\mu_{fc}$ are absolutely continuous probability measures whose densities, are analytic wherever positive. We denote the density functions by $\widetilde\rho_{fc}$ and $\rho_{fc}$. In general the measures $\rho_{fc}$ and $\widetilde\rho_{fc}$ are supported on several disjoint intervals and may have irregular edges where the densities do not vanish as a square root or have cusp points inside the support. The following assumption will rule out such scenarios.

\begin{assumption}\label{assumption2.3}
	Let $\mathcal{I}$ be the smallest interval that contains the support of $\mu_{\alpha}$, and assume that
	\begin{equation*}
	\inf_{x \in\mathcal{I}} \int_{\R} \frac{\dd \mu_{\alpha}(a)}{(a-x)^2} \geq 1+ w,
	\end{equation*}
	for some constant $w>0$ (the left side may be infinite). Similarly, let $\hat{\mathcal{I}}$ be the smallest interval that contains the support of $\mu_{A}$, and assume that
	$$\inf_{x \in \hat{\mathcal{I}}} \int_{\R} \frac{\dd \mu_{A}(a)}{(a-x)^2} \geq 1+ w,$$
	for sufficiently large $N$.
\end{assumption}
The above assumption ensures that the density functions $\rho_{fc}$ and $\tilde{\rho}_{fc}$  are supported on a single interval (for $N$ sufficiently large) and vanish as square roots at the endpoints of the support.

\begin{lemma}(Lemma 2.4, 3.2 and 3.5 in \cite{bulk})
	Under Assumption~\ref{assumption2.3}, there exists $\tilde{L}_-$ and $\tilde L_+ \in \R$, such that $\mathrm{supp}\,\tilde \rho_{fc} =[\tilde L_-,\tilde L_+]$, and $\tilde \rho_{fc}$ is strictly positive in $(\tilde L_-,\tilde L_+)$. Moreover, there exists $C>1$ such that
	$$C^{-1} \sqrt{\tilde \kappa} \leq \tilde{\rho}_{fc}(E) \leq C \sqrt{\tilde \kappa}\,,\qquad\qquad E\in[\tilde L_-,\tilde L_+]\,,$$
	where $\tilde \kappa\deq\min \{ |E-\tilde{L}_-|, |E-\tilde L_+|\}.$ The endpoints $\tilde{L}_\pm$ are the two real solutions to the equation 
	\begin{align}\label{L}
\int_\R \frac{\dd\mu_\alpha(a)}{(a-\widetilde L_\pm-m_{fc}(\widetilde L_\pm))^2} =1\,. 
	\end{align}
	The  same holds true, for sufficiently large $N$, if we replace $\mu_\alpha$, $\tilde\rho_{fc}$, $\tilde L_{\pm}$ and $\tilde\kappa$ by $\mu_A$, $\rho_{fc}$, $  L_{\pm}$ and~$\kappa$, respectively. Here $[L_-,L_+]$ is the support of $\rho_{fc}$ and $\kappa\deq\min \{ |E-{L}_-|, |E- L_+|  \}.$ 
	\end{lemma}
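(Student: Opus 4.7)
The plan is to exploit the subordination equation~\eqref{self6} via an inverse-function argument. Define the subordination function $\omega(z) \deq z + \widetilde{m}_{fc}(z)$; rewriting~\eqref{self6} gives $\widetilde{m}_{fc}(z) = m_{\mu_\alpha}(\omega(z))$ in the sign convention of the paper, so that $z = F(\omega(z))$ with $F(\omega) \deq \omega - m_{\mu_\alpha}(\omega)$ analytic on $\C \setminus \mathcal{I}$. First I would verify, using $\im \omega(z) > 0$ on $\C^+$, that $\omega$ extends continuously to the closed upper half-plane and inverts $F$ on a suitable domain. The density on the real line is $\widetilde{\rho}_{fc}(E) = \pi^{-1}\im m_{\mu_\alpha}(\omega(E+\ii 0))$, which is strictly positive precisely when $\omega(E+\ii 0) \in \C^+$. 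Consequently, $E$ lies on the boundary of the support exactly when $\omega(E+\ii 0)$ touches the real line; this real limit point must lie outside $\mathcal{I}$, since $\im m_{\mu_\alpha}(\cdot + \ii 0)$ is strictly positive on the interior of the support of $\mu_\alpha$. At such a point $\omega_\pm$, the open mapping theorem forces $F'(\omega_\pm) = 0$, i.e.\ $\int (a-\omega_\pm)^{-2}\dd\mu_\alpha(a) = 1$; combined with $\widetilde L_\pm = F(\omega_\pm) = \omega_\pm - \widetilde m_{fc}(\widetilde L_\pm)$, this produces the edge equation~\eqref{L}.

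Next I would count the real solutions. On each of the two unbounded components of $\R \setminus \mathcal{I}$, the function $g(\omega) \deq \int (a-\omega)^{-2}\dd\mu_\alpha(a)$ is smooth and strictly monotone, blowing up as $\omega$ approaches $\partial \mathcal{I}$ from outside and decaying to $0$ at $\pm \infty$. Since $g \geq 1+w$ throughout $\mathcal{I}$ by Assumption~\ref{assumption2.3}, the same lower bound survives at $\partial\mathcal{I}$ by taking limits from outside, so the intermediate value theorem yields exactly one root $\omega_- < \inf \mathcal{I}$ and one root $\omega_+ > \sup \mathcal{I}$ of $g(\omega) = 1$. Setting $\widetilde L_\pm \deq F(\omega_\pm)$ and using continuity of $\omega(E+\ii 0)$ together with a connectedness argument yields $\mathrm{supp}\,\widetilde\rho_{fc} = [\widetilde L_-, \widetilde L_+]$ with $\widetilde\rho_{fc}(E) > 0$ on the open interval.

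For the square-root edge behavior, I would Taylor-expand $F$ around $\omega_\pm$: since $F'(\omega_\pm) = 0$, we have $z - \widetilde L_\pm = \tfrac{1}{2} F''(\omega_\pm)(\omega - \omega_\pm)^2 + O(|\omega-\omega_\pm|^3)$ with $F''(\omega_\pm) = -2\int (a-\omega_\pm)^{-3}\dd\mu_\alpha(a)$. Inverting and selecting the branch that keeps $\im \omega(z) \geq 0$ gives $\omega(z) - \omega_\pm \sim \sqrt{\widetilde\kappa}$. Because the edge equation forces $m_{\mu_\alpha}'(\omega_\pm) = 1$, taking imaginary parts yields $\widetilde\rho_{fc}(E) \sim \sqrt{\widetilde\kappa}$, with constants depending only on $|F''(\omega_\pm)|$. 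The identical argument, with $\mu_\alpha$ replaced by $\mu_A$, produces the corresponding statement for $\rho_{fc}$ and $L_\pm$ at large $N$. The principal technical hurdle is obtaining two-sided control on $|F''(\omega_\pm)|$, uniformly in $N$ in the $\mu_A$-version: this reduces to establishing a uniform lower bound $\mathrm{dist}(\omega_\pm, \mathcal{I}) \gtrsim 1$, which follows from the strict inequality in Assumption~\ref{assumption2.3} via a compactness/contradiction argument exploiting the tightness of $\{\mu_A\}$.
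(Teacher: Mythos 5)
The paper does not prove this lemma itself; it cites Lemma 2.4, 3.2, and 3.5 of~\cite{bulk}, and the proof burden is carried by that reference. Your outline nonetheless follows the approach that~\cite{bulk} (building on Biane~\cite{free2}) takes: invert the subordination relation $z = F(\omega)$ with $F(\omega) = \omega - m_{\mu_\alpha}(\omega)$, characterize the edges as critical points $F'(\omega_\pm)=0$ lying outside $\mathcal{I}$, count the solutions of $\int (a-\omega)^{-2}\dd\mu_\alpha(a)=1$ by monotonicity, and Taylor-expand $F$ at $\omega_\pm$ to extract the square-root profile. The skeleton is right, and the endpoint equation~\eqref{L} drops out correctly.

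There is, however, a genuine gap in the step where you conclude that $\omega(E+\ii 0)$ avoids $\mathcal{I}$. You justify this by saying $\im m_{\mu_\alpha}(\cdot+\ii 0)$ is strictly positive on the interior of the support, but that reasoning does not hold: $\mathcal{I}$ is only the convex hull of $\mathrm{supp}\,\mu_\alpha$ and may contain gaps where the density vanishes identically, and for $\mu_A$ (a finite sum of point masses) there is no absolutely continuous density at all. What actually keeps $\omega$ away from $\mathcal{I}$ is Assumption~\ref{assumption2.3} itself, via the identity $\im\bigl(\omega - m_{\mu_\alpha}(\omega)\bigr)=\im\omega\cdot\bigl(1-\int|a-\omega|^{-2}\dd\mu_\alpha(a)\bigr)$: since $\im z>0$ and $\im\omega(z)>0$, this forces $\int|a-\omega(z)|^{-2}\dd\mu_\alpha(a)<1$ on all of $\C^+$, so any boundary limit $\omega_*=\omega(E+\ii 0)\in\R$ satisfies $\int|a-\omega_*|^{-2}\dd\mu_\alpha(a)\le 1$, which by the assumption excludes $\omega_*\in\mathcal{I}$. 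Your sketch never actually deploys Assumption~\ref{assumption2.3} in the step where it is indispensable for the single-interval conclusion.

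The second weak point, which you do flag, is the uniform-in-$N$ lower bound on $\mathrm{dist}(\omega_\pm,\hat{\mathcal{I}})$. The proposed ``compactness/contradiction argument exploiting tightness'' is not convincing as stated: weak convergence of $\{\mu_{A^{(N)}}\}$ gives no control over $x\mapsto\int(a-x)^{-2}\dd\mu_{A^{(N)}}(a)$ near $\partial\hat{\mathcal{I}}^{(N)}$ (where atoms make it blow up), and $\hat{\mathcal{I}}^{(N)}$ itself is not stable under weak convergence, since a single stray $a_i$ can move its boundary. The uniform separation, and with it the two-sided bound on $F''(\omega_\pm)$, has to be extracted quantitatively from Assumption~\ref{assumption2.3} applied to $\mu_A$ together with the constraint $\int(a-\omega_\pm)^{-2}\dd\mu_A(a)=1$; the soft limiting argument you invoke does not substitute for that.
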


\subsection{Local law for the deformed Wigner matrices}

We introduce the spectral domain, 
\begin{equation}\label{ddd}
D':=\Big\{z=E+\ii \eta:  |E| \leq M, N^{-1+c} \leq \eta \leq  3 \Big\},
\end{equation}
where $M>1+\max\{ |\tilde{L}_-|, |\tilde L_+|\}$ and $c>0$ is small. Define the deterministic control parameters

\begin{equation}\label{control}
\Psi(z):=\sqrt{ \frac{\im m_{fc}(z)}{N |\eta|}} +\frac{1}{N |\eta|}\,,\qquad\Theta(z):=\frac{1}{N |\eta|}\,, \qquad z=E+\ii \eta \in \C \setminus \R.
\end{equation}
Using (\ref{1}), (\ref{12}) in Lemma \ref{previous} below, we have 
$$C N^{-\frac{1}{2}} \leq \Psi(z) \ll 1\,,\qquad\quad  z \in D'\,.$$
The following local law for the Green function was proved in \cite{locallaw}.
\begin{theorem}\label{locallaw}(Local law for the deformed Wigner matrix, Theorem 2.10 in \cite{locallaw})
	Under the Assumptions 2.1-2.3, the following holds
\begin{align}\label{G}
	\max_{ij}\Big| G_{ij}(z)- \delta_{ij}\frac{1}{a_i-z-m_{fc}(z)} \Big| \prec \Psi(z), \quad \Big| N^{-1} \Tr G(z)-m_{fc}(z) \Big| \prec \Theta(z),
\end{align}
uniformly for $z\in D'$.
\end{theorem}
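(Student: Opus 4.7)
The plan is to establish both estimates in \eqref{G} simultaneously via a self-consistent resolvent equation for the diagonal Green function entries, combined with a stability analysis of the Pastur equation~\eqref{self} and a fluctuation averaging step. I would follow the template developed for (generalized) Wigner matrices in~\cite{isotropic3, isotropic} and adapted to the deformed setting. Using the Schur complement formula together with a resolvent identity relating $G$ to its $i$th minor $G^{(i)}$, one writes
$$\frac{1}{G_{ii}(z)} = a_i - z - H_{ii} - \sum_{k,l\neq i} H_{ik}\, G^{(i)}_{kl}(z)\, H_{li}.$$
Decomposing the quadratic form into its partial expectation $\frac{1}{N}\Tr G^{(i)}(z)$, which differs from $m_N(z)$ by $O((N\eta)^{-1})$ due to the rank-one interlacing bound, plus a centered fluctuation $Z_i(z)$, one obtains
$$G_{ii}(z) \;=\; \frac{1}{a_i - z - m_N(z) + \Upsilon_i(z)},\qquad \Upsilon_i \;=\; -Z_i + H_{ii} + O\!\bigl((N\eta)^{-1}\bigr).$$

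The centered part $Z_i$ is controlled by standard large deviation bounds for quadratic forms of independent sub-exponential variables (using Assumption~\ref{assumption_1_deformed}) together with Ward's identity $\sum_k|G^{(i)}_{kj}(z)|^2=\eta^{-1}\Im G^{(i)}_{jj}(z)$, yielding $|Z_i|\prec\Psi(z)$ and hence $|\Upsilon_i|\prec \Psi(z)$. Summing over $i$ produces a $\Psi$-perturbation of the Pastur equation~\eqref{self} for $m_N$. Under Assumption~\ref{assumption2.3} the linearization of the Pastur operator is invertible uniformly in $z\in D'$, with norm deteriorating only like $(\sqrt{\kappa+\eta})^{-1}$ near the regular edges; this stability transfers the $\Psi$-bound on $\Upsilon_i$ into the entrywise estimate $|G_{ii}(z)-(a_i-z-m_{fc}(z))^{-1}|\prec \Psi(z)$ and a preliminary averaged bound $|m_N-m_{fc}|\prec \Psi(z)$. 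Off-diagonal entries $G_{ij}$ with $i\neq j$ are treated via the identity $G_{ij}=-G_{ii}\sum_{k\neq i}H_{ik}G^{(i)}_{kj}$ combined again with large deviation bounds. The whole scheme is first established at $\eta\sim 1$ (essentially a global perturbation statement) and then propagated to $\eta\ge N^{-1+c}$ by a standard continuity argument on a polynomially fine grid, using $|\partial_z G_{ij}(z)|\le\eta^{-2}$.

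To upgrade the averaged bound from $\Psi$ to the optimal $\Theta(z)=(N\eta)^{-1}$, I would invoke a fluctuation averaging lemma: the centered errors $Z_i$ are approximately independent across $i$, so the average $N^{-1}\sum_i Z_i$ gains an additional factor of $\Psi$ over any single $Z_i$. Reinserting this improved input into the perturbed Pastur equation, together with the stability bound, yields $|m_N-m_{fc}|\prec \Theta$. The main obstacle is the stability analysis near the edges $L_\pm$: under Assumption~\ref{assumption2.3} the density $\rho_{fc}$ vanishes like a square root and the smallest singular value of the stability operator scales like $\sqrt{\kappa+\eta}$, so the inverse nearly explodes. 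Showing that this degeneracy is exactly compensated by the square-root factor $\sqrt{\Im m_{fc}(z)/(N\eta)}$ built into $\Psi(z)$, and that the bounds remain sharp uniformly down to $\eta\gg N^{-1}$ all the way up to the edges, is the most delicate part of the argument; it is what distinguishes this deformed local law from its purely bulk counterpart.
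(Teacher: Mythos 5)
The paper does not prove this theorem itself; it is quoted verbatim as Theorem~2.10 from~\cite{locallaw} (Lee--Schnelli). Your sketch is a correct outline of the strategy used in that reference: Schur-complement representation of $G_{ii}$, control of the fluctuation $Z_i$ by large-deviation bounds and the Ward identity, stability of the perturbed Pastur equation~\eqref{self} under Assumption~\ref{assumption2.3} (with the $(\sqrt{\kappa+\eta})^{-1}$ degeneration near the regular edges compensated by the $\sqrt{\Im m_{fc}/(N\eta)}$ factor in $\Psi$), a continuity/bootstrap argument in $\eta$, and finally a fluctuation-averaging lemma to upgrade the averaged bound from $\Psi$ to $\Theta=(N\eta)^{-1}$; apart from an inconsequential sign on the $H_{ii}$ term in the Schur complement, the outline is faithful.
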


 The local law gives  strong rigidity estimates for the eigenvalues of $X_N$. It also gives an upper bound, up to factors of $N^\epsilon$, on the size of the fluctuations $\Tr G(z) -\E \Tr G(z) $. It is hence natural to study the fluctuations of $\Tr G(z) -\E \Tr G(z)$. The CLT for the linear eigenvalue statistics for general test functions is proved in \cite{global2} and \cite{global} on global scale when $\im z$ is order one. Via the Helffer-Sj\"ostrand functional calculus, a CLT for the resolvent can be translated to a CLT for the linear statistics.

\begin{theorem}[Theorem 2.15 of \cite{global}]\label{global}	
Under the Assumptions \ref{assumption_1_deformed}-\ref{assumption2.3}, for any $\varphi \in C_c(\R)$ which is analytic on a neighborhood of $[\tilde{L}_-, \tilde{L}_+]$, the random variable $\sum_{i=1}^N \varphi(\lambda_i)-N \int_{\R} \varphi(x) \rho_{fc}(x) \dd x$ converges in distribution to the Gaussian random variable with mean $M(\varphi)=-\frac{1}{2 \pi \ii} \int_{\Gamma} \varphi(z) \tilde b(z) \dd z$, and variance
 $V(\varphi)=\frac{1}{(2 \pi \ii)^2} \int_{\Gamma} \int_{\Gamma} \varphi(z_1) \varphi(z_2) \tilde K(z_1,z_2) \dd z_1 \dd z_2$, where 
 $$\tilde b(z):=\frac{\tilde{m}''_{fc}(z)}{2(1+\tilde{m}'_{fc}(z))^2} \Big( (m_2-1)+\tilde{m}'_{fc}(z)+(W_4-3)\frac{\tilde{m}'_{fc}(z)}{1+\tilde{m}'_{fc}(z)}  \Big),$$ 
 and $\tilde K(z_1,z_2):=(m_2-2) \frac{\partial^2 \tilde{I} }{\partial z_1 \partial z_2}+(W_4-3) \Big( \tilde{I} \frac{\partial^2 \tilde{I} }{\partial z_1 \partial z_2} +\frac{\partial \tilde{I} }{\partial z_1} \frac{\partial \tilde{I} }{\partial z_2} \Big)+\frac{2}{(1-\tilde{I})^2} \Big( \frac{\partial \tilde{I} }{\partial z_1} \frac{\partial \tilde{I} }{\partial z_2} +(1-\tilde{I}) \frac{\partial^2 \tilde{I} }{\partial z_1 \partial z_2}  \Big), $ with 
 $$\tilde{I}(z_1,z_2)\deq\int_\R \frac{1}{(x-z_1-\tilde{m}_{fc}(z_1))(x-z_2-\tilde{m}_{fc}(z_2))} \dd \mu_{\alpha}(x).$$
	Here $\Gamma$ is a rectangular contour with vertices $(a_{\pm}\pm\ii v_0)$ so that $\pm(a_{\pm}-\tilde{L}_{\pm})>0$ and $\Gamma$ lies within the analytic domain of $\varphi$.
\end{theorem}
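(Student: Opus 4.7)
The plan is to reduce the CLT for the linear statistic to a CLT for the resolvent process $z \mapsto \Tr G(z) - N m_{fc}(z)$ on the contour $\Gamma$, and then establish the latter via the cumulant expansion method together with the local law of Theorem \ref{locallaw}. Since $\varphi$ is analytic inside the region enclosed by $\Gamma$ and all eigenvalues $\lambda_i$ lie strictly inside $\Gamma$ with probability $1-o(1)$ by rigidity (a consequence of Theorem \ref{locallaw}), Cauchy's formula gives
\begin{equation}
\sum_{i=1}^N \varphi(\lambda_i) - N \int \varphi(x) \rho_{fc}(x) \dd x = -\frac{1}{2\pi \ii} \oint_\Gamma \varphi(z) \bigl( \Tr G(z) - N m_{fc}(z) \bigr) \dd z.
\end{equation}
It therefore suffices to show that $\Tr G(z) - N m_{fc}(z)$ converges in distribution, as a random analytic process on $\Gamma$, to a Gaussian process whose mean equals $\wt b(z)$ and whose covariance equals $\wt K(z_1, z_2)$; Assumption \ref{assumption_2_deformed} then allows replacement of $m_{fc}$ by $\wt m_{fc}$ with $o(1)$ error, producing the claimed $M(\varphi)$ and $V(\varphi)$.

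The main tool is the cumulant expansion $\E[H_{ij} f(H)] = \sum_{p=0}^{P} \frac{\kappa_{p+1}(H_{ij})}{p!} \E[\partial_{H_{ij}}^p f(H)] + R_P$, applied to the resolvent identity $\sum_j H_{ij} G_{jk} = \delta_{ik} + (z - a_i) G_{ik}$. Differentiation in $H_{ij}$ turns Green entries into products of two Green entries; after summation over $j$ and setting $i=k$, the first-cumulant term reproduces the Pastur equation \eqref{self} for $\E m_N$ at leading order, while the second- and fourth-cumulant terms contribute deterministic corrections of order $1/N$ proportional to $m_2$ and $W_4 - 3$ respectively. Linearizing \eqref{self} about $m_{fc}$ with stability factor $(1 + \wt m_{fc}')^{-1}$ and integrating against $\varphi$ reconstructs the formula for $\wt b$. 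The same expansion applied to $\E[\Tr G(z_1) ( \Tr G(z_2) - \E \Tr G(z_2))]$ produces, on the local-law level, integrals against $\mu_\alpha$ of the form $\wt I(z_1, z_2)$ and its partial derivatives; solving the resulting linear equation for the covariance yields the three blocks visible in $\wt K$: the $(m_2-2)$ diagonal piece, the $(W_4-3)$ fourth-cumulant piece, and the $(1 - \wt I)^{-2}$ self-consistent resummation.

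Gaussianity of the limit is obtained by bounding all cumulants of $\Tr G(z)$ of order $p \geq 3$ through iterated cumulant expansions: each such cumulant is of order $O(N^{-(p-2)/2})$ and hence vanishes, since $|G_{ij}(z)| = O(1)$ uniformly on $\Gamma$ by Theorem \ref{locallaw}. The main obstacle is the explicit computation of the linear response: one must carefully track how every cumulant of the matrix entries propagates through the nested resolvent identities, then solve the linearized subordination equations in closed form to extract the stated expressions for $\wt b$ and $\wt K$. The subordination identity $\wt m_{fc}(z) = \int (a - z - \wt m_{fc}(z))^{-1} \dd \mu_\alpha(a)$ and its implicit-function derivatives supply the change of variables that identifies the Green-function integrals with $\wt I$ and its derivatives.
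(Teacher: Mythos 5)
This statement is quoted from the literature --- the paper labels it ``(Theorem 2.15 of \cite{global})'' and supplies no proof of it; it is an input, not a result established here. So there is no proof in the paper to compare your proposal against. What the paper does do, in Sections~\ref{sec:strategy}--\ref{sec:expectation}, is prove an $N$-dependent version of the same statement (Proposition~\ref{prop}, Theorem~\ref{thm:weak_convergence}, Proposition~\ref{prop2}) that is valid on all scales $N^{-1}\ll\eta_0\sqrt{\kappa_0+\eta_0}\le 1$ and, on the global scale, recovers the kernel and bias of Theorem~\ref{global} (the authors remark after Proposition~\ref{prop2} that their $b(z)$ ``coincides with the expectation that obtained in the global CLT''). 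That proof goes through the Landon--Sosoe characteristic-function method: one shows $\phi'(\lambda)=-\lambda\phi(\lambda)V(f)+\text{error}$ by applying a single cumulant expansion inside $\E[e(\lambda)(\Tr G-\E\Tr G)]$ and using the Helffer--Sj\"ostrand representation, which yields Gaussianity by solving the resulting ODE for $\phi$. Your proposal takes the alternative ``direct cumulant'' route (in the spirit of He--Knowles): express the linear statistic as a Cauchy integral against $\Tr G$ on a fixed contour, compute the first two cumulants of $\Tr G$ via cumulant expansions, and bound all cumulants of order $p\ge 3$ by $O(N^{-(p-2)/2})$. Both routes use the same raw material (cumulant expansion of $(z-a_i)G_{ii}=(HG)_{ii}-1$, the self-consistent equation~\eqref{self}, the stability factor $(1-I_s(z))^{-1}$, and the local law); the characteristic-function ODE simply packages the ``vanishing of higher cumulants'' into a single algebraic step, which is why this paper and Landon--Sosoe prefer it.

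Two concrete soft spots in your outline. First, the bound $|G_{ij}(z)|=O(1)$ ``uniformly on $\Gamma$ by Theorem~\ref{locallaw}'' is not what the local law gives: Theorem~\ref{locallaw} is stated on the domain $D'$ where $\eta\ge N^{-1+c}$, and cannot be cited on the vertical segments of $\Gamma$ (where $\eta\downarrow 0$ at $\re z=a_\pm$). There the resolvent is bounded not by the local law but by rigidity together with the deterministic estimate $\|G(z)\|\le\operatorname{dist}(z,\operatorname{spec}(X_N))^{-1}$ and the fact that $a_\pm$ sit at macroscopic distance from $[\tilde L_-,\tilde L_+]$; this requires a short separate argument. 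Second, the passage from ``$\Tr G(\cdot)-Nm_{fc}(\cdot)$ converges as a process on $\Gamma$ to a Gaussian process'' to the CLT for the linear statistic requires tightness on $\Gamma$ in addition to convergence of finite-dimensional marginals --- the Cauchy integral is a continuous functional only once a topology on path space is fixed. Neither issue is fatal (both are handled in the cited literature), but as written they are gaps: the heavy lifting --- the explicit cumulant computations that produce $\tilde b$, $\tilde K$ and the $O(N^{-(p-2)/2})$ cumulant bounds --- is acknowledged but not carried out, so this is a program, not a proof.
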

Using ideas of M. Shcherbina \cite{M.Shcherbina}, the above result can be extended to $C^2_c(\R)$ test functions.  In \cite{global2}, the corresponding result was obtained for the multi-cut regime. 

\subsection{Main results}

Choose $E_0 \in [-1+\tilde L_-, 1+\tilde L_+]$ and $ N^{-1} \ll \eta_0 \ll 1$. Consider a test function $g \in C_c^2(\R)$ and set
\begin{equation}\label{fn}
f_{N}(x):=g\Big( \frac{x-E_0}{\eta_0}\Big).
\end{equation}
We will write $f_N$ as $f$ for notational simplicity. Define
\begin{equation}\label{kappa}
\kappa_0:=\mathrm{dist}(\mbox{supp}(f),\{L_+,L_-\}).
\end{equation}

Following \cite{lytova+pastur, character}, we study the characteristic function 
\begin{equation}\label{mme}
\phi(\lambda):=\E[e(\lambda)], \quad \mbox{where }e(\lambda):=\exp \Big\{ \i \lambda(\Tr f(X_N)-\E \Tr f(X_N)) \Big\}, \qquad \lambda \in \R.
\end{equation}
Let $\tau>0$ be an arbitrary small constant and define
\begin{equation}\label{domain}
\Omega_{0} := \Big\{ x + \i y \in \C\,:\, |y| \geq N^{-\tau} \eta_0  \Big\}\,.
\end{equation}
A key observation in \cite{character} is that working on $\Omega_0$ instead of all $\C$, effectively removes the ultra-local scales without affecting the mesoscopic linear statistics.

\begin{proposition}\label{prop}
	Let $X_N$ be a deformed Wigner matrix satisfying Assumptions \ref{assumption_1_deformed}-\ref{assumption2.3}. Let $\eta_0 \sqrt{\kappa_0+\eta_0} \geq N^{-1+c_0}$ for some $c_0>0$. Then there exists a small $0 < \tau< \frac{c_0}{16}$, such that the characteristic function~(\ref{mme}) satisfies
	\begin{align}\label{vf}
	\phi'(\lambda)=-\lambda \phi(\lambda) V(f)+\tilde{\mathcal{E}}, \qquad V(f):=\frac{1}{\pi^2} \int_{\Omega_{0}} \int_{\Omega_{0}} \pzaa \tf(z_1) \pzbb \tf(z_2) K(z_1,z_2) \dd^2z_1 \dd^2z_2,
	\end{align}
	where $\tf$ is an almost analytic extension of $f$ given in (\ref{tilde_f}) below and $\beta=1,2$ is the symmetry parameter. 
	The kernel $K$ is given by
	\begin{align}\label{kernel}
	K(z_1,z_2):= \pzab  \Big( (m_2-\frac{2}{\beta}) {I} +\frac{(W_4-1-\frac{2}{\beta})}{2} {I}^2\Big)+\frac{2}{\beta} \frac{\partial}{\partial z_1}\Big(\frac{1}{1- I}\frac{\partial  I}{\partial z_2}\Big),
	\end{align}
	with
	\begin{equation}\label{iii}
	{I}(z_1,z_2):=\int_\R \frac{1}{(x-z_1-{m}_{fc}(z_1))(x-z_2-{m}_{fc}(z_2))} \dd \mu_{A}(x),
	\end{equation}
	and the error $\tilde{\mathcal{E}}$ is bounded by
	$$|\tilde{\mathcal{E}}|=O_{\prec}\Big( |\lambda| \log N N^{- \tau} \Big)+O_{\prec}\Big( \frac{(1+|\lambda|^4)N^{3 \tau}}{N \eta_0 \sqrt{\kappa_0+ \eta_0}}\Big)+O_{\prec}\Big( \frac{(1+|\lambda|^4)N^{2 \tau}}{\sqrt{N \eta_0 \sqrt{\kappa_0+\eta_0}}}\Big),$$
	provided that $ V(f)=O(1)$. 
	\end{proposition}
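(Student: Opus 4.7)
The plan is to follow the characteristic function approach of \cite{character}: differentiate $\phi$, convert the trace of $f(X_N)$ into a resolvent integral via the Helffer-Sj\"ostrand functional calculus, and then extract the leading behavior through a cumulant expansion. Differentiating \eqref{mme} yields $\phi'(\lambda)=\i\,\E[(\Tr f(X_N)-\E\Tr f(X_N))\,e(\lambda)]$, and inserting the representation $\Tr f(X_N)=\frac{1}{\pi}\int_{\C}\pzz\tf(z)\,\Tr G(z)\,\dd^2z$ gives
\begin{equation*}
\phi'(\lambda)=\frac{\i}{\pi}\int_{\C}\pzz\tf(z)\,\E\!\left[(\Tr G(z)-\E\Tr G(z))\,e(\lambda)\right]\dd^2 z.
\end{equation*}

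The first step is to restrict the integration to $\Omega_0$. On the ultra-local strip $\C\setminus\Omega_0=\{|y|<N^{-\tau}\eta_0\}$, the almost analytic extension satisfies $|\pzz\tf(z)|=O(|y|)$ while the rigidity estimates following from Theorem~\ref{locallaw} control $\Tr G(z)-\E\Tr G(z)$ uniformly; together with the trivial bound $|e(\lambda)|\le 1$, the resulting contribution is $O_\prec(|\lambda|\log N\cdot N^{-\tau})$, yielding the first term of $\tilde{\mathcal{E}}$. For $z\in\Omega_0$, the resolvent identity $HG=I+zG-AG$ reads componentwise as $\sum_j H_{ij}G_{ji}=1+(z-a_i)G_{ii}$. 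Multiplying by $e(\lambda)$, taking expectations, and applying the cumulant expansion
\begin{equation*}
\E[H_{ij}\,F(H)]=\sum_{k=1}^{4}\frac{c_{k+1}(H_{ij})}{k!}\E\!\left[\partial^k_{H_{ij}}F(H)\right]+R_4(F),
\end{equation*}
where $c_{k+1}$ denotes the $(k+1)$-th cumulant and $R_4$ is controlled by \eqref{moment condition finite}, produces a self-consistent equation for $\E[(G_{ii}-\E G_{ii})e(\lambda)]$. The derivatives act either on Green function entries via $\partial_{H_{ij}}G_{kl}=-G_{ki}G_{jl}-G_{kj}G_{il}$ (with $\beta$-dependent combinatorics), or on $e(\lambda)$ through $\partial_{H_{ij}}e(\lambda)=\i\lambda\bigl(\partial_{H_{ij}}\Tr f(X_N)\bigr)e(\lambda)$, where a second application of Helffer-Sj\"ostrand rewrites $\partial_{H_{ij}}\Tr f(X_N)$ as an integral of $\pzz\tf(w)\,(G^2(w))_{ij}$ against $\dd^2 w$.

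Substituting the local law $G_{ij}(z)\approx\delta_{ij}/(a_i-z-m_{fc}(z))$ identifies the leading contributions. The $k=1$ diagonal piece produces the $(m_2-2/\beta)$ factor; the off-diagonal $\beta$-dependent terms, combined with derivatives landing on $e(\lambda)$, generate the $(2/\beta)(1-I)^{-1}$ part of the kernel after a geometric resummation of the self-consistent equation in $I(z,z)$; the fourth cumulant contributes the $(W_4-1-2/\beta)I^2/2$ term; and the third-cumulant contributions are absorbed into the subtraction $\E\Tr f(X_N)$ inside $e(\lambda)$, mirroring the bias $\tilde b$ of Theorem~\ref{global}. Integrating the resulting expression against $\pzz\tf(z_1)\pzz\tf(z_2)$ over $\Omega_0^2$ yields $-\lambda\phi(\lambda)V(f)$ with the kernel \eqref{kernel}. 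The main technical obstacle is the uniform control of errors near the spectral edges, where $\im m_{fc}(z)\sim\sqrt{\kappa+\eta}$ degenerates and $\Psi(z)$ deteriorates; one must track the powers $|\lambda|^k$, $k\le 4$, arising from repeated differentiation of $e(\lambda)$ in the higher cumulants, together with the integrals of $\Psi(z_j)$ against $|\pzz\tf(z_j)|$, which upon summation produce precisely the $\eta_0\sqrt{\kappa_0+\eta_0}$ factors in the denominators of $\tilde{\mathcal{E}}$.
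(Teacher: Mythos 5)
Your proposal follows the same route as the paper: Helffer--Sj\"ostrand to convert $\Tr f(X_N)$ to a resolvent integral, restriction to $\Omega_0$, cumulant expansion of $\E[H_{ij}G_{ji}e(\lambda)]$ via the identity $(z-a_i)G_{ii}=(HG)_{ii}-1$, substitution of the local law, and resummation in the subordination quantities $I$, $I_s$ to produce the kernel $K$. The attribution of the kernel pieces to the second, third, and fourth cumulants is consistent with the paper's Lemma~\ref{lemma4}, and the tracking of the $|\lambda|^k$ powers from differentiating $e(\lambda)$ mirrors Lemma~\ref{lemma2}.

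One imprecision worth fixing: you attribute the error $O_\prec(|\lambda|\log N\, N^{-\tau})$ to dropping the strip $\C\setminus\Omega_0$ in the outer integral, but since $|e(\lambda)|=1$, that step alone contributes only $O_\prec(N^{-\tau})$, with no $\lambda$-dependence. The $|\lambda|$ enters through the replacement of $e(\lambda)$ by the truncated $e_0(\lambda)$ (defined by restricting the inner Helffer--Sj\"ostrand integral to $\Omega_0$), which gives $|e(\lambda)-e_0(\lambda)|=O_\prec(|\lambda|N^{-\tau})$; the $\log N$ then comes from Lemma~\ref{lemma3} with $s=1$ applied to $|\Tr G-\E\Tr G|\prec 1/\eta$. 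This replacement is not cosmetic: it is what ensures $\partial_{H_{ij}}e_0(\lambda)$ involves only $z'\in\Omega_0$ with $|\Im z'|\ge N^{-\tau}\eta_0$, which is what makes the bounds in Lemma~\ref{lemma2} (and thus all the cumulant error estimates) hold. You should make this $e\to e_0$ step explicit rather than only truncating the outer integral.
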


Proposition \ref{prop} implies the following result.
\begin{theorem}\label{thm:weak_convergence}
	Under the same assumptions as in Proposition \ref{prop}, if we further assume that there exist $c,C>0$ such that $c \leq V(f) \leq C$ for sufficiently large $N$, then $\frac{\Tr f(X_N)-\E \Tr f(X_N)}{\sqrt{V(f)}}$ converges in distribution to a standard Gaussian random variable.
\end{theorem}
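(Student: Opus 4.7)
The plan is to deduce the CLT directly from Proposition \ref{prop} by turning the stated relation into an ordinary differential equation for the characteristic function of the normalized statistic, then comparing it to the Gaussian one via an integrating factor, and finally invoking L\'evy's continuity theorem.

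First I would set $Z_N\deq (\Tr f(X_N)-\E\Tr f(X_N))/\sqrt{V(f)}$ and consider its characteristic function $\Phi_N(\lambda)\deq\E[\e{\ii\lambda Z_N}]=\phi(\lambda/\sqrt{V(f)})$. Differentiating in $\lambda$ and plugging in the identity \eqref{vf} from Proposition \ref{prop}, one obtains
\begin{equation*}
\Phi_N'(\lambda)=\frac{1}{\sqrt{V(f)}}\phi'\!\Big(\tfrac{\lambda}{\sqrt{V(f)}}\Big)=-\lambda\,\Phi_N(\lambda)+\mathcal{E}_N(\lambda),\qquad \mathcal{E}_N(\lambda)\deq\frac{1}{\sqrt{V(f)}}\,\tilde{\mathcal{E}}\!\Big(\tfrac{\lambda}{\sqrt{V(f)}}\Big),
\end{equation*}
and the initial condition $\Phi_N(0)=1$. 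Note that the hypothesis $c\le V(f)\le C$ in particular ensures $V(f)=O(1)$, so that Proposition \ref{prop} is applicable.

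Next I compare $\Phi_N$ with the standard Gaussian characteristic function $g(\lambda)\deq\e{-\lambda^2/2}$, which satisfies $g'(\lambda)=-\lambda g(\lambda)$ and $g(0)=1$. Setting $R_N(\lambda)\deq\Phi_N(\lambda)-g(\lambda)$, we have $R_N(0)=0$ and $R_N'(\lambda)=-\lambda R_N(\lambda)+\mathcal{E}_N(\lambda)$. Multiplying by the integrating factor $\e{\lambda^2/2}$ and integrating from $0$ to $\lambda$ gives
\begin{equation*}
R_N(\lambda)=\e{-\lambda^2/2}\int_0^{\lambda}\e{s^2/2}\,\mathcal{E}_N(s)\,\dd s.
\end{equation*}
Fixing any $T>0$ and restricting to $|\lambda|\le T$, the uniform bound $|\lambda/\sqrt{V(f)}|\le T/\sqrt{c}$ combined with the error estimate of Proposition \ref{prop} yields
\begin{equation*}
\sup_{|s|\le T}|\mathcal{E}_N(s)|\le N^{\epsilon}\Big(N^{-\tau}\log N+\frac{N^{3\tau}}{N\eta_0\sqrt{\kappa_0+\eta_0}}+\frac{N^{2\tau}}{\sqrt{N\eta_0\sqrt{\kappa_0+\eta_0}}}\Big)\cdot C_T
\end{equation*}
for any small $\epsilon>0$ and a constant $C_T$ depending only on $T$ and $c$. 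Since $\eta_0\sqrt{\kappa_0+\eta_0}\ge N^{-1+c_0}$ and $\tau<c_0/16$, each of the three terms is $o(1)$ as $N\to\infty$, so that $\sup_{|\lambda|\le T}|R_N(\lambda)|\to 0$.

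Consequently $\Phi_N(\lambda)\to\e{-\lambda^2/2}$ pointwise (indeed uniformly on compact sets) in $\lambda\in\R$. By L\'evy's continuity theorem, $Z_N$ converges in distribution to a standard Gaussian random variable, which is exactly the claim. The only place a genuine obstacle could arise is the uniform control of the error $\tilde{\mathcal{E}}$ for $\lambda$ in a compact interval, but this is already built into the statement of Proposition \ref{prop} through its polynomial dependence on $\lambda$; thus once that proposition is granted, the present theorem follows from the elementary ODE argument above.
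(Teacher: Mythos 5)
Your proof is correct and follows essentially the same route the paper sketches: differentiate the characteristic function, use the relation from Proposition \ref{prop}, show the error vanishes on compacts, and invoke L\'evy continuity. The one (minor, welcome) difference is that where the paper appeals to the Arzel\`a--Ascoli theorem to extract a uniformly convergent subsequence and then identifies the limit as the unique solution of $\psi'=-\lambda\psi$, $\psi(0)=1$, you instead use an integrating factor to write $R_N(\lambda)=\Phi_N(\lambda)-\e{-\lambda^2/2}=\e{-\lambda^2/2}\int_0^\lambda \e{s^2/2}\mathcal{E}_N(s)\,\dd s$ and bound it directly; this is a cleaner and more quantitative way to reach the same conclusion and avoids any compactness argument. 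Your verification that $\sup_{|s|\le T}|\mathcal{E}_N(s)|\to 0$ correctly uses both $V(f)\ge c$ (to control $|s/\sqrt{V(f)}|$) and $\tau<c_0/16$ together with $\eta_0\sqrt{\kappa_0+\eta_0}\ge N^{-1+c_0}$ (so that the $N^{2\tau}$ and $N^{3\tau}$ factors are beaten), so the argument is complete.
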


We remark that the Theorem \ref{thm:weak_convergence} applies to the global scale as well as the mesoscopic scales. The expectation of $\Tr f(X_N)$ has the following asymptotic expansion, which matches the result in \cite{global2, global} on the global scale.
\begin{proposition}\label{prop2} Under the same assumptions as in Proposition \ref{prop}, the so-called bias is given by
	\begin{equation}\label{bias_formula}
	\E \Tr f(X_N)-N \int_{\R} f(x) \rho_{fc}(x) \dd x=\frac{1}{2 \pi} \int_{\Omega_0} \pzz \tf(z) b(z)  \dd^2z +O(N^{-\tau})+O_{\prec}\Big( \frac{N^{2 \tau}}{\sqrt{N \eta_0 \sqrt{\kappa_0+\eta_0}}}\Big),
	\end{equation}
	where $\tf$ is given in (\ref{tilde_f}) below, and
	\begin{equation}\label{bz}
	b(z):=\Big(\frac{2}{\beta}-1\Big)\frac{1}{1-I_s(z)} \frac{\dd I_s(z)}{\dd z} +\Big(m_2-\frac{2}{\beta}\Big) \frac{\dd I_s(z)}{\dd z}+\Big(W_4-1-\frac{2}{\beta}\Big) I_s(z)\frac{\dd I_s(z)}{\dd z},
	\end{equation}
	with
	\begin{equation}\label{Is}
	{I_s}(z):=\int_{\R} \frac{1}{(x-z-{m}_{fc}(z))^2} \dd \mu_{A}(x).
	\end{equation}

\end{proposition}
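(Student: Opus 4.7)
The plan is to mirror the derivation of Proposition \ref{prop}, replacing the characteristic function by the mean Stieltjes transform. Starting from the Helffer-Sj\"ostrand functional calculus and $N m_{fc}(z)=N\int(x-z)^{-1}\rho_{fc}(x)\dd x$, I would first write
\begin{equation*}
\E\Tr f(X_N)-N\int_\R f(x)\rho_{fc}(x)\dd x=\frac{1}{\pi}\int_\C \pzz\tf(z)\bigl(\E\Tr G(z)-N m_{fc}(z)\bigr)\dd^2 z.
\end{equation*}
Splitting the domain of integration at $|\im z|=N^{-\tau}\eta_0$, and using that the standard almost analytic extension of $g((x-E_0)/\eta_0)$ satisfies $|\pzz\tf(z)|\lesssim \eta_0^{-2}|y|$ on the support of $g'((\re z-E_0)/\eta_0)$, together with the crude bound $|\E\Tr G-N m_{fc}|\le N(\im m_N+\im m_{fc})$ following from Theorem \ref{locallaw}, one controls the contribution of $\C\setminus\Omega_0$ by the first error term $O(N^{-\tau})$ in \eqref{bias_formula}.

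The core of the argument is the pointwise expansion
\begin{equation*}
\E\Tr G(z)-N m_{fc}(z)=b(z)+O_\prec(\mathrm{error}),\qquad z\in\Omega_0,
\end{equation*}
which I would derive by the same cumulant expansion strategy \eqref{cumulant} used for Proposition \ref{prop}, starting from the resolvent identity $(a_i-z)\E G_{ii}+\sum_k\E[H_{ik}G_{ki}]=1$ and expanding each $\E[H_{ik}G_{ki}]$ in cumulants of $H_{ik}$. The second cumulant reproduces a linearized version of the Pastur equation \eqref{self} whose inversion introduces the subordination factor $(1-I_s(z))^{-1}$ and, through the diagonal term $\E|H_{ii}|^2=m_2/N$ and the symmetric--Hermitian distinction between the partial derivatives of $G$ with respect to $H_{ik}$ and $H_{ki}$, produces the $(m_2-2/\beta)\,\dd I_s/\dd z$ contribution and the $(2/\beta-1)$ prefactor in front of $(1-I_s)^{-1}\dd I_s/\dd z$. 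The third cumulant vanishes by centering, and the fourth cumulant contributes the $(W_4-1-2/\beta)\,I_s\,\dd I_s/\dd z$ term. Higher cumulants are absorbed into the error by the sub-exponential moment bound \eqref{moment condition finite} combined with the local law \eqref{G}. Differentiating the Pastur equation \eqref{self} gives $m_{fc}'(z)=I_s(z)/(1-I_s(z))$, which is exactly what is needed to recombine the cumulant output into the compact form of $b(z)$ displayed in \eqref{bz}. Substituting this expansion back into the Helffer-Sj\"ostrand representation, after a $z$-integration by parts used to bring the various derivatives of $I_s$ into the shape of \eqref{bz}, produces the main term $\tfrac{1}{2\pi}\int_{\Omega_0}\pzz\tf(z)\,b(z)\,\dd^2 z$ together with the propagated error $O_\prec(N^{2\tau}/\sqrt{N\eta_0\sqrt{\kappa_0+\eta_0}})$, since $\pzz\tf$ has effective support of area comparable to $\eta_0$.

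The principal obstacle will be establishing the pointwise bias expansion \emph{uniformly} in $z\in\Omega_0$ up to the regular edge. Indeed, \eqref{L} shows that $1-I_s(z)$ vanishes like $\sqrt{\kappa_0+\eta_0}$ near the edge, so every inversion used to solve the linearized Pastur equation amplifies errors by the corresponding factor; this is the ultimate origin of the $\sqrt{\kappa_0+\eta_0}$ in the denominator of the stated error and is the reason the condition $\eta_0\sqrt{\kappa_0+\eta_0}\gg N^{-1}$ cannot be weakened by this method. Controlling these amplifications requires the quantitative stability of the Pastur equation guaranteed by Assumption \ref{assumption2.3}, together with averaged versions of the local law \eqref{G} needed to bound the sums over $(i,k)$ of products of resolvent entries that appear after differentiating $G_{ki}$ in the cumulant expansion.
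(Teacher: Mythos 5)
Your overall route is exactly the one the paper takes: restrict the Helffer-Sj\"ostrand integral to $\Omega_0$, expand $\E[(HG)_{ii}]$ by cumulants, recognise the linearised Pastur equation in the second-order term and divide by $1-I_s(z)$ (which costs a factor $(\kappa+\eta)^{-1/2}$ and is the source of both the $(1-I_s)^{-1}$ prefactor in $b(z)$ and the worsening of the error near the edge), then use $m'_{fc}=I_s/(1-I_s)$ and $\tfrac{\dd I_s}{\dd z}=\tfrac{2}{N}\sum_i (1+m'_{fc})(a_i-z-m_{fc})^{-3}$ to rewrite the output in the compact form \eqref{bz}. One clarification: the paper obtains $b(z)$ directly from this rewriting, without the ``$z$-integration by parts'' you allude to; Stokes' formula only appears afterwards, when the bias formula \eqref{bias_formula} is evaluated in the bulk or at the edge.

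There is, however, one genuine error in the proposed argument. You assert that ``the third cumulant vanishes by centering.'' It does not: Assumption \ref{assumption_1_deformed} only requires the entries to be centred with prescribed second and fourth moments, and the third cumulant $c^{(3)}_{ij}=\E[(\sqrt{N}H_{ij})^3]$ is in general nonzero (similarly for the $(p,q)$ cumulants with $p+q=3$ in the Hermitian case). In the paper's computation this term survives and produces, after using the local law on the diagonal entries, a sum of the form
\begin{equation*}
\frac{3}{N^{3/2}}\sum_{j=1}^N \frac{c^{(3)}_{ij}}{(a_i-z-m_{fc})(a_j-z-m_{fc})}\,G_{ij}(z),
\end{equation*}
whose smallness is \emph{not} automatic: it requires the anisotropic local law of Theorem \ref{isotropic} together with a continuity (lattice) argument as in \eqref{estimate} to show the $j$-sum is $O_\prec(\sqrt{N}\Psi(z))$, hence the contribution is $O_\prec(N^{-1}\Psi(z))$ and negligible after the $(1-I_s)^{-1}$ amplification. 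If you literally tried to discard the third-cumulant block on the basis of centring alone, the step would fail; you need to carry it through and estimate it with the anisotropic law, exactly as for the second- and fourth-cumulant error terms you do mention at the end of your proposal.
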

 Note that the variance $V(f)$ in (\ref{vf}) and the bias in (\ref{bias_formula}) are N-dependent and their formulas depend explicitly on the free convolution measures. We will compute their limits on the mesoscopic scales as $N$ goes to infinity in order to obtain the following universal CLTs in the bulk and at the regular edges respectively.
\begin{theorem}\label{meso}(Mesoscopic CLT in the bulk)
	Let $X_N$ be a deformed Wigner matrix satisfying Assumptions \ref{assumption_1_deformed}-\ref{assumption2.3}. Let $ N^{-1+c} \leq \eta_0 \leq N^{-c}$ with some small $c>0 $, fix $E_0 \in (\tilde{L}_-,\tilde{L}_+)$ such that $\kappa_0 > c_0$, for some $c_0>0$ and large $N$. Then, for any test function $g \in C^2_c(\R)$, the linear statistics 
	\begin{equation}\label{linear_stat}
	\sum_{i=1}^N g \Big( \frac{\lambda_i-E_0}{\eta_0} \Big)-N \int_{\R} g \Big(\frac{x-E_0}{\eta_0} \Big) \rho_{fc}(x) \dd x
	\end{equation}
	converges in distribution to a Gaussian random variable with mean zero and variance
	\begin{align}\label{bulk_variance}
	\frac{1}{2 \beta \pi^2} \int_{\R}  \int_{\R}  \frac{(g(x_1)-g(x_2))^2}{(x_1-x_2)^2} \dd x_1 \dd x_2=\frac{1}{\beta \pi} \int_{\R} |\xi| |\hat{g}(\xi)|^2 \dd \xi,
	\end{align}
	where $\hat{g}(\xi):=(2 \pi)^{-1/2} \int_{\R} g(x) e^{-\i \xi x} \dd x$. In particular, the bias vanishes in the bulk regime.
\end{theorem}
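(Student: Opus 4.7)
The plan is to invoke Theorem~\ref{thm:weak_convergence} and Proposition~\ref{prop2}: once the variance $V(f)$ in~\eqref{vf} is shown to converge to the claimed expression and the bias in~\eqref{bias_formula} is $o(1)$, the CLT follows. The bulk assumption $\kappa_0>c_0$ ensures that $m_{fc}$ extends analytically from~$\C^+$ to a complex neighborhood of $E_0$, with boundary values $m_\pm:=m_{fc}(E_0\pm\ii 0)$ satisfying $m_-=\overline{m_+}$ and $\Im m_+=\pi\rho_{fc}(E_0)>0$. Setting $g(z):=z+m_{fc}(z)$, a partial-fraction computation gives the algebraic identities
\begin{equation*}
I(z_1,z_2)=\frac{m_{fc}(z_1)-m_{fc}(z_2)}{g(z_1)-g(z_2)},\qquad 1-I(z_1,z_2)=\frac{z_1-z_2}{g(z_1)-g(z_2)},
\end{equation*}
hence
\begin{equation*}
\frac{2}{\beta}\,\partial_{z_1}\!\Bigl(\frac{1}{1-I}\,\partial_{z_2}I\Bigr)=\frac{2}{\beta}\Bigl[\frac{g'(z_1)g'(z_2)}{(g(z_1)-g(z_2))^2}-\frac{1}{(z_1-z_2)^2}\Bigr].
\end{equation*}

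Next, I would rescale $\zeta_j=(z_j-E_0)/\eta_0$, which turns the variance into
\begin{equation*}
V(f)=\frac{1}{\pi^2}\iint \partial_{\bar\zeta_1}\tilde g(\zeta_1)\,\partial_{\bar\zeta_2}\tilde g(\zeta_2)\,\bigl[\eta_0^2\,K(E_0+\eta_0\zeta_1,E_0+\eta_0\zeta_2)\bigr]\,\dd^2\zeta_1\dd^2\zeta_2,
\end{equation*}
and then perform an asymptotic analysis of $\eta_0^2 K$ as $\eta_0\to 0$. When $\Im\zeta_1,\Im\zeta_2$ share the same sign, a Taylor expansion of $g$ around $E_0$ gives $g(z_1)-g(z_2)=\eta_0\,g'_\pm(\zeta_1-\zeta_2)(1+O(\eta_0))$, so the two singular terms above cancel to leading order and $\eta_0^2 K\to 0$. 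When $\Im\zeta_1\Im\zeta_2<0$, $g(z_1)-g(z_2)\to m_+-m_-\neq 0$ stays bounded while $-1/(z_1-z_2)^2\sim -1/(\eta_0^2(\zeta_1-\zeta_2)^2)$ survives. The moment-dependent pieces $\partial^2_{z_1z_2}[(m_2-2/\beta)I+\tfrac{W_4-1-2/\beta}{2}I^2]$ are bounded analytic functions of $(z_1,z_2)$ in a complex neighborhood of $(E_0,E_0)$ (the apparent singularity of $I$ at $z_1=z_2$ is removable), so they contribute only $O(\eta_0^2)$ after rescaling. Combining,
\begin{equation*}
\eta_0^2\,K(E_0+\eta_0\zeta_1,E_0+\eta_0\zeta_2)\;\longrightarrow\;-\frac{2}{\beta}\,\frac{1}{(\zeta_1-\zeta_2)^2}\,\mathbf{1}\{\Im\zeta_1\,\Im\zeta_2<0\},
\end{equation*}
which makes the $\beta$-universality manifest. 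Substituting this limit into the rescaled double integral and applying a Cauchy--Pompeiu argument in the spirit of~\cite{character} (integrate by parts against the almost-analytic extension $\tilde g$, use $g(x)=-\pi^{-1}\int_{\C}\partial_{\bar z}\tilde g(z)/(z-x)\,\dd^2z$ to recover boundary integrals on~$\R$, and then apply Parseval with $\int_\R h^{-2}|1-e^{-\ii\xi h}|^2\,\dd h=2\pi|\xi|$) identifies the limit as $\frac{1}{2\beta\pi^2}\iint_{\R^2}(g(x_1)-g(x_2))^2/(x_1-x_2)^2\,\dd x_1\dd x_2=\frac{1}{\beta\pi}\int|\xi||\hat g(\xi)|^2\,\dd\xi$.

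The bias in~\eqref{bias_formula} is handled more directly: $b(z)$ depends only on $I_s(z)$ and $(1-I_s(z))^{-1}$, and in the bulk $1-I_s(E_0+\ii 0)=1/(1+m'_+)\ne 0$, so $b$ is bounded and analytic in a complex neighborhood of $E_0$ in~$\C^+$; since $\partial_{\bar z}\tilde f$ carries $L^1$-mass of order $\eta_0$, the contour integral evaluates to $O(\eta_0)=o(1)$. The main technical obstacle is to upgrade the pointwise kernel convergence to convergence of the double integral: uniformity near the diagonal $\zeta_1=\zeta_2$ is provided by the cut-off $|\Im\zeta_j|\geq N^{-\tau}$ built into $\Omega_0$ combined with the almost-analyticity of $\tilde g$, while uniformity at infinity comes from the compact support of $g$. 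These are handled by quantitative Taylor remainder estimates on $m_{fc}$ in a complex bulk neighborhood, analogous to those developed in~\cite{character} for the pure Wigner case.
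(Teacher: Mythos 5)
Your proposal is essentially the same as the paper's own proof of Theorem~\ref{meso}, which goes through Lemma~\ref{vfbulk} and the bias computation in Section~\ref{sec:expectation}. The key ingredients match: (i) apply Proposition~\ref{prop}/Theorem~\ref{thm:weak_convergence} and Proposition~\ref{prop2} to reduce to computing $\lim V(f)$ and the bias; (ii) use the algebraic identity $K_3=\frac{2}{\beta}\bigl(\frac{(1+m'_1)(1+m'_2)}{(z_1+m_1-z_2-m_2)^2}-\frac{1}{(z_1-z_2)^2}\bigr)$ (your $g'(z)=1+m'_{fc}(z)$, same formula the paper uses in Lemma~\ref{bfedge}); (iii) observe that $K_1+K_2$ and the same–half-plane part of $K_3$ are negligible, while the opposite–half-plane part of $K_3$ concentrates on $-\frac{2}{\beta}(z_1-z_2)^{-2}$; (iv) evaluate via a Stokes/Cauchy–Pompeiu argument on $\Omega_0$, a rescaling $\zeta=(z-E_0)/\eta_0$, and dominated convergence to reach the $H^{1/2}$-type variance; (v) note $b(z)=O(1)$ in the bulk so the bias is $O(\eta_0)$. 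The only cosmetic differences from the paper's Lemma~\ref{vfbulk} are the order of operations (you rescale first and then invoke the boundary-integral argument, the paper applies Stokes first to reduce to explicit contours $\Gamma_1,\Gamma_2$ and rescales afterward), and the justification of the same–half-plane cancellation (you use a Taylor expansion of $z\mapsto z+m_{fc}(z)$; the paper bounds $|K_3|=O(1)$ directly via the bulk estimates (\ref{same}) and (\ref{derivative1})--(\ref{derivative3}) — note that your first-order expansion must actually be carried to second order to see that the two $1/(z_1-z_2)^2$-singularities cancel to give an $O(1)$ kernel, which is what the $N^{-\tau}$-cutoff is then compared against). One minor notational remark: you reuse the symbol $g$ for both the test function and the map $z\mapsto z+m_{fc}(z)$, which would need to be disambiguated in a final write-up.
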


\begin{theorem}\label{mesoedge}(Mesoscopic CLT at the edge)
	Let $X_N$ be a deformed Wigner matrix satisfying Assumptions \ref{assumption_1_deformed}-\ref{assumption2.3}. Let $N^{-\frac{2}{3}+c} \leq \eta_0 \leq N^{-c}$ with some small $c>0$.  For any function $g \in C^2_c(\R)$,
	the linear statistics (\ref{linear_stat}) with $E_0=L_+$
	converges in distribution to a Gaussian random variable with mean $\Big(\frac{2}{\beta}-1\Big) \frac{g(0)}{4}$ and variance
	\begin{align}\label{edge_variance}
	\frac{1}{4 \beta \pi^2}  \int_{\R}  \int_{\R}  \Big( \frac{g(-x^2)-g(-y^2)}{x-y} \Big)^2 \dd x \dd y=\frac{1}{2 \beta \pi} \int_{\R} |\xi| |\hat{h}(\xi)|^2 \dd \xi,
	\end{align}
	where $h(x)=g(-x^2)$ and $\hat{h}(\xi):=(2 \pi)^{-1/2} \int_{\R} h(x) e^{-\i \xi x} \dd x$. At the left edge $E_0=L_-$, we obtain a similar CLT with $h(x)=g(x^2)$.
\end{theorem}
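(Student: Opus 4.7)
The plan is to combine Proposition~\ref{prop} with its consequence Theorem~\ref{thm:weak_convergence} and the bias formula in Proposition~\ref{prop2}. Once we show that the variance $V(f)$ defined in~\eqref{vf} converges to the expression in~\eqref{edge_variance} (which is bounded above and away from zero for non-trivial $g$), and that the bias integral in~\eqref{bias_formula} converges to $(2/\beta - 1) g(0)/4$, the theorem follows. Both limits reduce to a careful asymptotic analysis of $m_{fc}$, $I$, and $I_s$ for spectral parameters in a disk of radius $\lesssim \eta_0$ around $L_+$, where $\partial_{\bar z}\tilde f$ is effectively supported.

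The starting point is the square-root edge behavior of $\rho_{fc}$ together with the edge condition~\eqref{L}. Differentiating the Pastur equation~\eqref{self} yields $m_{fc}'(z) = I_s(z)/(1 - I_s(z))$, which together with $I_s(L_+) = 1$ produces the expansion $m_{fc}(L_+ + \zeta) = m_{fc}(L_+) + \gamma\sqrt{-\zeta} + O(\zeta)$ for some explicit $\gamma > 0$, with the branch of $\sqrt{\cdot}$ chosen so that $\Im\sqrt{-\zeta} > 0$ on $\C^+$. Substituting into~\eqref{iii} and~\eqref{Is} and Taylor-expanding around $\xi_+ := L_+ + m_{fc}(L_+)$ gives, uniformly for $\zeta_j := z_j - L_+$ with $|\zeta_j| \lesssim \eta_0$,
\begin{equation*}
1 - I_s(z) = -\tfrac{2}{\gamma}\sqrt{-\zeta}\,(1 + o(1)), \qquad 1 - I(z_1, z_2) = -\tfrac{1}{\gamma}\bigl(\sqrt{-\zeta_1} + \sqrt{-\zeta_2}\bigr)(1 + o(1)),
\end{equation*}
together with $\partial_{z_j} I(z_1, z_2) = \tfrac{1}{2\gamma\sqrt{-\zeta_j}}(1 + o(1))$. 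Substituted into~\eqref{bz}, the smooth $dI_s/dz$ and $I_s\, dI_s/dz$ terms are only of size $|\zeta|^{-1/2}$ and integrate to $O(\sqrt{\eta_0}) \to 0$ against $\partial_{\bar z}\tilde f\, d^2z$. The surviving term $(2/\beta - 1)(1 - I_s)^{-1} dI_s/dz = -(2/\beta - 1)\,(d/dz)\log(1 - I_s)$ develops a simple pole at $L_+$ with residue $-(2/\beta - 1)/2$, and the Cauchy--Pompeiu formula for the almost-analytic extension $\tilde f$ (the Helffer--Sj\"ostrand identity read backwards) evaluates the bias integral to $(2/\beta - 1) f(L_+)/4 = (2/\beta - 1) g(0)/4$, matching the claimed mean.

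For the variance, only the piece $\frac{2}{\beta}\partial_{z_1}\bigl(\frac{1}{1-I}\partial_{z_2}I\bigr)$ of the kernel~\eqref{kernel} contributes in the limit, since the coefficients multiplying $I$ and $I^2$ are regular and yield $O(\eta_0)$ after integration against two factors of $\partial_{\bar z_j}\tilde f\, d^2z_j$. After the rescaling $z_j = L_+ + \eta_0 w_j$ and $u_j = \sqrt{-w_j}$, the rescaled kernel $\eta_0^2 K$ converges to a universal function of $(u_1,u_2)$; deforming contours (justified by the support and decay of the almost-analytic extension of $g$) collapses $V(f)$ to the double integral in~\eqref{edge_variance}, and the Fourier expression is then a Plancherel computation with $h(x) = g(-x^2)$. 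The main obstacle is justifying these leading-order replacements uniformly over the effective support of $\partial_{\bar z_j}\tilde f$, which spans both the immediate $\sqrt{\zeta}$ regime near $L_+$ and the farther region where the tail decay of the almost-analytic extension is the source of control; this requires the stability estimates for the subordination equation near the edge developed in~\cite{bulk,global,global2} and the uniform local law of Theorem~\ref{locallaw}. Once these are in place, the reduction of the rescaled integrals to the stated closed-form expressions is a direct calculation.
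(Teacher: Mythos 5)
Your proposal is correct and follows essentially the same strategy as the paper's proof (Lemma~\ref{bfedge} and Section~\ref{sec:expectation}): for the bias you identify the simple pole of $b(z)$ at $L_+$ with residue $-(2/\beta-1)/2$ and recover $f(L_+)=g(0)$ via the inverse Helffer--Sj\"ostrand/Sokhotski--Plemelj identity, and for the variance you isolate the $K_3$ piece of the kernel, rescale by the square-root expansion of $m_{fc}$ near $L_+$, and pass to the universal edge integral by a change of variables $w=\sqrt{\cdot}$ and dominated convergence. The one cosmetic slip is a sign: with the branch $\Im\sqrt{z-L_+}>0$ on $\C^+$, the expansion is $1-I_s(z)=\tfrac{2}{c_+}\sqrt{z-L_+}\,(1+o(1))$ (real and positive for $z>L_+$), not $-\tfrac{2}{\gamma}\sqrt{-\zeta}$, but this does not alter the residue, the rescaled kernel, or the rest of the argument.
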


{\it Remark:} The bulk variance (\ref{bulk_variance}) agrees with the GOE/GUE. For the edges, the bias and variance in~\eqref{edge_variance} coincide with those of the GUE/GOE obtained in~\cite{basor+widom, Min+Chen} and the Dyson Brownian motion in~\cite{huang}.

{\it Remark:} We remark that our assumption that the fourth moments of the off-diagonal entries are identical can easily  be relaxed in the above theorems. The regularity condition we impose on the test function $g$ is clearly not optimal, and we expect results can be extended to $C^{1,r,s}(\R)$ functions; see~\cite{moment}. The CLTs also hold true if we consider the resolvent test function $g(x)=\frac{1}{x-\ii}$.

Finally, for test functions in $C^2_c(\R)$, we can relax the single support condition for $\mu_{fc}$ by assuming instead that the cuts of the support of $\mu_{fc}$ are separated by order one and the density $\rho_{fc}$ has square-root decay near the edges.

\section{Proof of Proposition \ref{prop}}\label{sec:strategy}
In this section, we prove Proposition \ref{prop} by reducing it to the main technical result Lemma~\ref{lemma4}. Recall the scaled test function $f$ on scale $\eta_0$ from (\ref{fn}). There are constants such that
\begin{equation}\label{assumpf}
\|f\|_1 \leq C \eta_0; \qquad \|f'\|_1 \leq C'; \qquad \|f''\|_1 \leq \frac{C''}{\eta_0}.
\end{equation}

We use the Helffer-Sj\"ostrand formula to link $f(X_N)$ to the Green function of $X_N$.
\begin{lemma}\label{helffler}(Helffer-Sj\"ostrand formula) 
	Let $f \in C_c^2(\R)$ and $\chi(y)$ be a smooth cutoff function with support in $[-2,2]$, with $\chi(y)=1$ for $|y| \leq 1$. Define its almost-analytic extension 
	\begin{equation}\label{tilde_f}
	\tilde{f}(x+\ii y):=(f(x)+\ii y f'(x)) \chi(y).
	\end{equation}
	Then we have
	\begin{equation}\label{use_x}
	f(\lambda)=\frac{1}{\pi} \int_{\C} \frac{\pzz \tilde{f}(z)}{\lambda-z} \dd^2z=\frac{1}{2 \pi} \int_{\R^2} \frac{\ii y f''(x) \chi(y)+\ii \Big( f(x)+\ii y f'(x) \Big) \chi'(y)}{\lambda-x-\ii y} \dd x \dd y,
	\end{equation}
	where $z=x+\ii y$, $\pzz=\frac{1}{2}(\px+\ii \py)$, and $\dd ^2z$ is the Lebesgue measure on $\C$.
\end{lemma}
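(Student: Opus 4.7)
The plan is to establish the two equalities in (\ref{use_x}) separately. The first identity is a version of Cauchy's integral formula for compactly supported $C^1$ functions on $\C$, which one proves via the complex form of Stokes' theorem after excising the singularity at $z = \lambda$. The second identity then follows from a direct computation of $\pzz \tilde f$ and substitution.

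For the first identity, I would fix $\lambda \in \R$ and apply the complex Stokes formula
\[
\int_{\Omega} \pzz F(z)\,\dd^2 z = \frac{1}{2\ii} \oint_{\partial \Omega} F(z)\,\dd z
\]
to $F(z) = \tilde f(z)/(\lambda - z)$ on the domain $\Omega_\varepsilon := \C \setminus \overline{D_\varepsilon(\lambda)}$. Since $1/(\lambda - z)$ is holomorphic on $\Omega_\varepsilon$, one has $\pzz F(z) = \pzz \tilde f(z)/(\lambda - z)$ there. Because $\chi$ is compactly supported, so is $\tilde f$, and the boundary contribution at infinity vanishes. What remains is the integral over $\partial D_\varepsilon(\lambda)$, traversed clockwise (since $\Omega_\varepsilon$ lies outside). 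Parametrizing this contour as $z = \lambda + \varepsilon e^{-\ii \theta}$, $\theta \in [0, 2\pi]$, a short computation reduces the boundary integral to $\tfrac{1}{2}\int_0^{2\pi} \tilde f(\lambda + \varepsilon e^{-\ii \theta})\,\dd \theta$, which converges to $\pi \tilde f(\lambda) = \pi f(\lambda)$ as $\varepsilon \to 0$ by continuity of $\tilde f$ and the fact that $\chi(0) = 1$. Dividing by $\pi$ yields the first equality.

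For the second identity, I would compute $\pzz \tilde f = \tfrac{1}{2}(\partial_x + \ii \partial_y)\tilde f$ directly from $\tilde f(x + \ii y) = (f(x) + \ii y f'(x))\chi(y)$. The $f'(x)\chi(y)$ term produced by $\partial_x$ cancels exactly against the $-f'(x)\chi(y)$ term produced by $\ii \partial_y$, which is the defining almost-analyticity property of $\tilde f$ and explains why the $\eta \to 0$ singularity is integrable. The surviving terms give
\[
2\pzz \tilde f(z) = \ii y f''(x)\chi(y) + \ii\bigl(f(x) + \ii y f'(x)\bigr) \chi'(y),
\]
and substituting this into the first equality produces the explicit double-integral form in (\ref{use_x}).

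The only technical point, and a mild one, is the careful treatment of the singularity at $z = \lambda$: one must justify the $\varepsilon \to 0$ limit and track the orientation of $\partial D_\varepsilon(\lambda)$ inside Stokes' theorem. Once this is handled, the remainder of the argument is routine differentiation and substitution. It is worth recording the cancellation in the Cauchy--Riemann computation explicitly, since this property of the almost-analytic extension $\tilde f$ is precisely what will be used in later sections to bound resolvent expansions on mesoscopic scales.
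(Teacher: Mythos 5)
Your proof is correct, and it is the standard derivation of the Helffer--Sj\"ostrand formula. The paper states this lemma without proof, treating it as a classical fact, so there is nothing in the source to compare against; your route (complex Stokes on $\C \setminus \overline{D_\varepsilon(\lambda)}$, clockwise orientation on the inner circle, passage to the limit $\varepsilon \to 0$ using continuity of $\tilde f$ at $\lambda$ and $\chi(0)=1$, followed by the direct computation of $\partial_{\bar z}\tilde f$) is exactly the textbook argument. Two small points you handle correctly but would be worth stating explicitly in a written-up version: the convergence of $\int_{\Omega_\varepsilon}$ to $\int_{\C}$ requires the absolute integrability of $\partial_{\bar z}\tilde f(z)/(\lambda - z)$ near $z=\lambda$, which follows because $\chi'\equiv 0$ for $|y|\le 1$ forces $\partial_{\bar z}\tilde f(z)=\tfrac{\ii}{2}y f''(x)=O(|y|)$ there, while for $|y|>1$ one has $|\lambda-z|\ge 1$; and Stokes requires $F\in C^1$, which holds because $f\in C_c^2$ makes $\tilde f\in C^1$. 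The cancellation $f'(x)\chi(y)-f'(x)\chi(y)$ that you flag is indeed the essential almost-analyticity property and is computed correctly.
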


Therefore, we write
\begin{align}\label{fw}
\Tr f(X_N) -\E \Tr f(X_N) =\frac{1}{\pi} \int_{\C} \pzz \tf(z) (\Tr(G(z))-\E \Tr G(z))\dd^2z.
\end{align}
Plugging the above equation in $e(\lambda)$ given by (\ref{mme}), we have
\begin{equation}\label{e}
e(\lambda)=\exp\Big\{ \frac{\i \lambda}{\pi}\int_{\C}  \pzz \tf(z) (\Tr(G(z))-\E \Tr G(z)) \dd^2 z\Big\}.
\end{equation}
Taking the derivative of the characteristic function given in (\ref{mme}), and applying (\ref{e}), we get
\begin{equation}\label{phi}
\phi'(\lambda)=\frac{\i}{\pi}\int_{\C} \pzz \tf(z)   \E \Big[ e(\lambda)(\Tr(G(z))-\E \Tr G(z)) \Big]  \dd^2z.
\end{equation}

Following \cite{character}, we restrict the domain of the spectral parameter to $\Omega_{0}$, as the very local scales do not contribute to $\phi(\lambda)$.  We write
\begin{equation}\label{tr_split}
\Tr f(X_N) -\E \Tr f(X_N)=\frac{1}{\pi}  \Big( \int_{\Omega_{0}} +\int_{\Omega^c_{0}} \Big) \frac{\partial\tilde{f}(z)}{\partial\bar{z}} \big(\Tr(G(z))-\E \Tr G(z)\big) \dd^2z.
\end{equation}
Recall $\tilde f$ in (\ref{tilde_f}) and the definition of $\Omega_0$ in (\ref{domain}). Since $\chi(y)=1$ for $|y| \leq 1$, we can write the second integral in (\ref{tr_split}) with $z=x+\ii y$ as
\begin{align}
\frac{N}{  \pi} \int_{\R} \int_{0}^{\frac{\eta_0}{N^{\tau} }} \ii y f''(x) (m_N(z) -\E m_N(z)  ) \dd x \dd y&=-\frac{N}{ \pi}  \int_{\R} \int_{0}^{\frac{\eta_0}{N^{\tau} }}   y f''(x) \Im (m_N(z)-\E m_N(z) ) \dd x \dd y,
\end{align}
where we used the fact that $m_N(\overline{z})=\overline{m_N(z)}$. We now choose a small $\tau>0$ such that $ N^{-1} \ll y_0:=\sqrt{\frac{\eta_0}{N^{1+\tau}}} \leq N^{-\tau} \eta_0$. In the regime $y \in [y_0, N^{-\tau} \eta_0]$, the integral can be estimated using the local law (\ref{G}), (\ref{assumpf}) and Lemma~\ref{dominant}, i.e.,
\begin{align}
\Big|\frac{N}{ \pi}  \int_\R \int_{y_0}^{N^{-\tau} \eta_0}   y f''(x) \Im \Big(m_N(z)-\E m_N(z) \Big) \dd x \dd y\Big| \prec  \Big| \int_\R f''(x)\dd x \int_{y_0}^{N^{-\tau} \eta_0}  \dd y\Big|=O_{\prec}(N^{-\tau}).
\end{align}
In the regime $y \in [0, y_0]$, the local law is not sharp but instead we use the fact that $y \rightarrow \Im m_N(x+\ii y)y$ is increasing. That is,
\begin{align}
\Big|\frac{N}{\pi}  \int_\R \int_{0}^{y_0} y f''(x) \Im (m_N(z)-\E m_N(z) ) \dd x \dd y\Big| =O_{\prec}\Big(\frac{N y_0^2}{\eta_0}\Big)=O_{\prec}(N^{-\tau}).
\end{align}
Therefore, we have from (\ref{tr_split}) that
\begin{equation}\label{fw2}
\Tr f(X_N) -\E \Tr f(X_N)=\frac{1}{\pi}  \int_{\Omega_{0}} \pzz \tilde{f}(z)  (\Tr(G(z))-\E \Tr G(z)) \dd^2z+O_{\prec}(N^{-\tau}).
\end{equation}

Using the same argument, since $|e(\lambda)|=1$, we have
\begin{equation}\label{newphi}
\phi'(\lambda)=\frac{\i}{\pi}\int_{\Omega_0} \pzz \tf(z)   \E \Big[ e(\lambda)(\Tr(G(z))-\E \Tr G(z)) \Big]  \dd^2z+O_{\prec}(N^{-\tau}).
\end{equation}
Similarly, we restrict the integration domain of $e(\lambda)$ in (\ref{e}) to $\Omega_0$. Let
\begin{equation}\label{e2}
\ea:=\exp\Big\{ \frac{\i \lambda}{ \pi}   \int_{\Omega_{0}}  \pzz \tf(z) (\Tr(G(z))-\E \Tr G(z)) \dd^2 z  \Big\}.
\end{equation}
In addition, (\ref{fw2}) implies that $|e(\lambda)-\ea| = O_{\prec}\Big( |\lambda| N^{ -\tau}  \Big).$ We also have $|\ea|=1$, using $|e(\lambda)|=1$ and $\Tr G(\overline{z})=\overline{\Tr G(z)}$. If we further replace $e(\lambda)$ by $\ea$ in (\ref{newphi}), then we get
\begin{equation}\label{phi2}
\phi'(\lambda)=\frac{\i}{ \pi}\int_{\Omega_{0}} \pzz \tf(z)  \E \Big[ (\ea (\Tr(G(z))-\E \Tr G(z)) \Big] \dd^2 z+O_{\prec}\Big( |\lambda| \log N N^{ -\tau} \Big).
\end{equation}

The last error term on the right side, and many error terms below, are estimated using the following lemma, which is a variant of Lemma 4.4 in \cite{character}. The proof is provided in Appendix B.
\begin{lemma}\label{lemma3}
		Suppose $h(z)$ is a holomorphic function on $\Omega_0$ and $|h(z)| \leq \frac{K}{|\Im z|^s}$ for some constants $s, K \geq 0$, then there exists some constant C such that
	$$\Big|\int_{\Omega_{0}}  \pzz \tf(z) h(z) \dd^2z \Big| \leq CK N^{\tau s} \eta_0^{1-s}.$$
	For $1 \leq s \leq 2$, the bound is sharpened to  $CK \log (N) \eta_0^{1-s}$.
\end{lemma}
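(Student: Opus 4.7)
The plan is to compute $\partial_{\bar z}\tilde f$ explicitly, convert the volume integral into a boundary integral via Stokes's theorem (exploiting that $h$ is holomorphic on $\Omega_0$, so $\partial_{\bar z}(\tilde f\, h)=(\partial_{\bar z}\tilde f)\,h$), and then, for the sharpened case $1\le s\le 2$, split $\Omega_0$ at an intermediate height to use Stokes above and a direct estimate below.

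A direct computation from $\tilde f = (f(x)+i y f'(x))\chi(y)$ gives
\begin{equation*}
\partial_{\bar z}\tilde f(z)=\tfrac{i}{2}\bigl[y f''(x)\chi(y)+\bigl(f(x)+i y f'(x)\bigr)\chi'(y)\bigr],
\end{equation*}
so the $\chi$-term carries a factor of $|y|$, while the $\chi'$-term is supported on $1\le|y|\le 2$. For the first bound I would apply Stokes's theorem on $\Omega_0$, whose boundary consists of the two lines $\{y=\pm N^{-\tau}\eta_0\}$ (the contributions at infinity vanish by compact support of $\tilde f$):
\begin{equation*}
\int_{\Omega_0}\partial_{\bar z}\tilde f(z)\,h(z)\,\dd^2z=\tfrac{1}{2i}\oint_{\partial\Omega_0}\tilde f(z)\,h(z)\,\dd z.
\end{equation*}
Since $\chi(N^{-\tau}\eta_0)=1$, on these lines $|\tilde f|\le |f(x)|+N^{-\tau}\eta_0|f'(x)|$ and $|h|\le K(N^{-\tau}\eta_0)^{-s}$. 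Combined with $\|f\|_1\le C\eta_0$ and $\|f'\|_1\le C$ this produces the claimed estimate $CKN^{\tau s}\eta_0^{1-s}$.

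For the sharpened bound with $1\le s\le 2$, I would split $\Omega_0$ at the height $y_1=\eta_0$ into $\{|y|>\eta_0\}$ and the slab $\{N^{-\tau}\eta_0\le|y|\le\eta_0\}$. On the outer region Stokes's theorem is applied with the new boundary $\{|y|=\eta_0\}$; since $\chi(\eta_0)=1$ (as $\eta_0\ll 1$) and $\tilde f$ has compact support, the resulting boundary integral is bounded by $CK\eta_0^{-s}(\|f\|_1+\eta_0\|f'\|_1)\le CK\eta_0^{1-s}$. On the slab, $\chi\equiv 1$ and $\chi'\equiv 0$, hence $|\partial_{\bar z}\tilde f|\le \tfrac{1}{2}|y|\,|f''(x)|$, and a direct estimate gives
\begin{equation*}
\Bigl|\int_{\{N^{-\tau}\eta_0\le|y|\le\eta_0\}}\partial_{\bar z}\tilde f\,h\,\dd^2z\Bigr|\le CK\|f''\|_1\int_{N^{-\tau}\eta_0}^{\eta_0}y^{1-s}\,\dd y.
\end{equation*}

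The main technical step is a uniform analysis of this last integral for $s\in[1,2]$. At $s=2$ it equals exactly $\tau\log N$, while for $s\in[1,2)$ the antiderivative yields $\tfrac{\eta_0^{2-s}\bigl(1-N^{-\tau(2-s)}\bigr)}{2-s}$. Using the elementary inequality $1-e^{-t}\le \min(1,t)$ with $t=\tau(2-s)\log N$ bounds this quantity by $C\eta_0^{2-s}\log N$ uniformly in $s\in[1,2]$, smoothly interpolating between the $s=1$ and $s=2$ regimes without producing a blowup at $s=2$. Combined with $\|f''\|_1\le C/\eta_0$, the slab contribution is $CK\log N\cdot \eta_0^{1-s}$, which dominates the outer Stokes contribution $CK\eta_0^{1-s}$ and yields the sharpened estimate. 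The delicate point, and what I would expect to be the chief obstacle, is ensuring that the $1/(2-s)$ factor arising from integrating $y^{1-s}$ is absorbed by the smallness factor $1-N^{-\tau(2-s)}$ uniformly across the whole range $s\in[1,2]$.
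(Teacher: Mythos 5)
Your proof is correct and is essentially the same as the paper's: for the un-sharpened bound the paper applies Stokes' theorem exactly as you do and estimates the boundary integral along $|y|=N^{-\tau}\eta_0$ using $\|f\|_1\lesssim\eta_0$, $\|f'\|_1\lesssim 1$; for the sharpened bound with $1\le s\le 2$ the paper simply cites Lemma~4.4 of \cite{character}, and your split of $\Omega_0$ at height $\eta_0$ (Stokes on the outer region, a direct estimate with $|\partial_{\bar z}\tilde f|\le\tfrac12|y|\,|f''(x)|$ on the slab) reproduces that cited argument. Your uniform-in-$s$ bound $\int_{N^{-\tau}\eta_0}^{\eta_0}y^{1-s}\,\dd y\le C\eta_0^{2-s}\log N$ is a correct and clean way to control the apparent $1/(2-s)$ singularity; combined with $\|f''\|_1\lesssim\eta_0^{-1}$ it yields the stated $CK\log N\cdot\eta_0^{1-s}$.
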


Thus, in order to study $\phi'(\lambda)$, it is sufficient to estimate $\E \Big[ \ea (\Tr(G(z))-\E \Tr G(z)) \Big]$. The key input is the following cumulant expansion formula.
\begin{lemma}\label{cumulant}(Cumulant expansion formula)
	Let $h$ be a real-valued random variable with finite moments, and $f$ is a complex-valued smooth function on $\R$ with bounded derivatives. Let $c_k$ be the $k$-th cumulant of $h$, given by $c_{k}(h):=(-\ii)^{k} \frac{\dd}{\dd t} \log \E e^{\ii t h} \vert_{t=0}$. Then for any fixed $l \in \N$, we have
	$$\E [h f(h)]=\sum_{k=0}^l \frac{1}{k!} c_{k+1}(h)\E[ f^{(k)}(h) ]+R_{l+1},$$
	where the error term satisfies
	$$|R_{l+1}| \leq C_l \E |h|^{l+2} \sup_{|x| \leq M} |f^{(l+1)}(x)| +C_l \E \Big[ |h|^{l+2} 1_{|h|>M}\Big] \|f^{(l+1)}\|_{\infty},$$
	and $M>0$ is an arbitrary fixed cutoff.
\end{lemma}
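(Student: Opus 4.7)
The plan is to reduce the identity to the polynomial case via Taylor expansion around zero and then control the resulting remainder using moment bounds on~$h$. The conceptual input will be the moment-cumulant recursion, and the analytic input is Taylor's theorem with integral remainder.

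First I would verify the identity with $R_{l+1} = 0$ when $f$ is a polynomial of degree at most $l$. By linearity it suffices to check this for monomials $f(x) = x^n$ with $n \le l$, which reduces to the classical recursion
\begin{equation*}
\E[h^{n+1}] \;=\; \sum_{k=0}^{n} \binom{n}{k}\, c_{k+1}(h)\, \E[h^{n-k}], \qquad 0 \le n \le l.
\end{equation*}
I would prove this by differentiating the identity $\phi_h = \exp \kappa$, where $\phi_h(t)\deq\E e^{\ii t h}$ and $\kappa \deq \log \phi_h$, and comparing Taylor coefficients at $t=0$; both functions are smooth in a neighborhood of the origin since $h$ has finite moments of all relevant orders.

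For general smooth $f$ with bounded derivatives, I would next Taylor expand $f$ at $x=0$ to order $l$ and each of its derivatives $f^{(k)}$ to order $l-k$, both with integral remainder:
\begin{equation*}
f(x) = \sum_{k=0}^{l} \frac{f^{(k)}(0)}{k!}\, x^{k} + r(x), \qquad f^{(k)}(x) = \sum_{j=0}^{l-k} \frac{f^{(k+j)}(0)}{j!}\, x^{j} + r_{k}(x),
\end{equation*}
where $r(x)$ and $r_k(x)$ are explicit integrals of $f^{(l+1)}(sx)$ against powers of $x$ and of $(1-s)$ up to combinatorial factors. Substituting $x=h$, taking expectations, and re-indexing the resulting double sum via $n = k+j$ reduces the polynomial parts on both sides of the identity to the same expression through the moment--cumulant recursion established above, so they cancel exactly. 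What remains is
\begin{equation*}
R_{l+1} \;=\; \E[h\, r(h)] - \sum_{k=0}^{l} \frac{c_{k+1}(h)}{k!}\, \E[r_{k}(h)].
\end{equation*}

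Finally I would estimate $R_{l+1}$ term by term. Every integrand in $R_{l+1}$ has the form $|h|^{p}\,|f^{(l+1)}(sh)|$ with $p \le l+2$ and $s \in [0,1]$, so $|sh| \le |h|$. Splitting the expectation according to whether $|h| \le M$ or $|h| > M$ yields
\begin{equation*}
\E\!\bigl[|h|^{p}\,|f^{(l+1)}(sh)|\bigr] \;\le\; \E|h|^{p}\cdot \sup_{|x|\le M}|f^{(l+1)}(x)| \;+\; \E\!\bigl[|h|^{p}\mathbf{1}_{|h|>M}\bigr]\cdot \|f^{(l+1)}\|_\infty,
\end{equation*}
which gives the desired two-part bound after absorbing powers of $\E|h|^{l+2}$ (used to estimate $\E|h|^{p}$ for $p \le l+2$) and the cumulants $|c_{k+1}(h)|$ (which are controlled by moments of $h$ of order $\le k+1$ via the Leonov--Shiryaev formula writing cumulants as signed sums of moment products over set partitions) into the constants $C_l$. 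The main bookkeeping obstacle is the re-indexing that identifies the double Taylor sum with the moment--cumulant recursion; once that cancellation is in place the remaining arguments are routine Taylor estimation and a cutoff split.
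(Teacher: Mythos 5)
The paper does not give a proof of Lemma~\ref{cumulant}; it simply cites Lemma~3.1 of~\cite{moment}, so there is no in-paper argument to compare with. Your route --- moment--cumulant recursion, exact verification on polynomials of degree $\le l$, then Taylor expansion of $f$ and each $f^{(k)}$ around $0$ so that the polynomial parts cancel via the recursion --- is the standard and correct one, and the first two steps together with the re-indexing $n=k+j$ are sound.

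There is a gap in the remainder estimate. In $R_{l+1}=\E[h\,r(h)]-\sum_{k=0}^l \tfrac{c_{k+1}(h)}{k!}\E[r_k(h)]$ each cumulant $c_{k+1}(h)$ sits as a multiplicative factor \emph{outside} the expectation $\E[r_k(h)]$, and you propose to absorb $|c_{k+1}(h)|$ (and $\E|h|^{l+2}$) into the constant $C_l$. This cannot be done: $C_l$ must depend only on $l$, or the stated bound becomes vacuous, while $c_{k+1}(h)$ certainly depends on~$h$. Nor does $|c_{k+1}(h)|\le C_k\,\E|h|^{k+1}$ alone suffice for the truncated piece, since $\E|h|^{k+1}\cdot\E\big[|h|^{l-k+1}1_{|h|>M}\big]$ is a product of two expectations and is not immediately dominated by $\E\big[|h|^{l+2}1_{|h|>M}\big]$. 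A correct finish combines the cumulant with the moment factor: split $\E|h|^{k+1}=\E\big[|h|^{k+1}1_{|h|\le M}\big]+\E\big[|h|^{k+1}1_{|h|>M}\big]$; bound the first summand by $M^{k+1}$ and pair it with $\E\big[|h|^{l-k+1}1_{|h|>M}\big]\le M^{-(k+1)}\E\big[|h|^{l+2}1_{|h|>M}\big]$; for the second summand apply H\"older with exponents $\tfrac{l+2}{k+1}$ and $\tfrac{l+2}{l-k+1}$ on the restriction of $\P$ to $\{|h|>M\}$, giving $\E\big[|h|^{k+1}1_{|h|>M}\big]\E\big[|h|^{l-k+1}1_{|h|>M}\big]\le\E\big[|h|^{l+2}1_{|h|>M}\big]$. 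Altogether $|c_{k+1}(h)|\,\E\big[|h|^{l-k+1}1_{|h|>M}\big]\le 2C_k\,\E\big[|h|^{l+2}1_{|h|>M}\big]$, which is what the $\|f^{(l+1)}\|_\infty$ part requires; the part carrying $\sup_{|x|\le M}|f^{(l+1)}(x)|$ is handled similarly by H\"older on the full measure. With this replacement your argument closes.
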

For reference, we refer e.g. to Lemma 3.1 in \cite{moment}. We give the  proof the following lemma in Section~5. 

\begin{lemma}\label{lemma4}
	For any $z:=E+\ii \eta \in \Omega_{0} \cap D'$, see (\ref{ddd}), and $\kappa:=\min \{ |E-{L}_-|, |E- L_+|  \}.$ we have
	$$\E [\ea (\Tr G(z)- \E \Tr G(z))] =\frac{\ii \lambda}{\pi } \E [\ea] \int_{\Omega_{0}}\pzzp \tf(z') K(z,z')  \dd^2 z'+\mathcal{E}(z),$$
	where $K$ is given in (\ref{kernel})
	and $\mathcal{E}(z)$ is analytic in $\Omega_0$ and satisfies
	\begin{equation}\label{error_sample}
	\mathcal{E}(z)=O_{\prec} \Big( \frac{1+|\lambda|^4}{\sqrt{\kappa+\eta}} \Big) \Big( \frac{(\kappa+\eta)^{1/4}}{\sqrt{N \eta^3}}+\frac{1}{\sqrt{N \eta^2}}+\frac{1}{\sqrt{N \eta_0 \eta}}+ \frac{1}{N \eta_0 \eta} +\frac{1}{N\eta^2}\Big).
	\end{equation}
\end{lemma}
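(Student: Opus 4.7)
My plan is to derive the identity in Lemma \ref{lemma4} via cumulant expansions applied to a resolvent identity. Start from $(a_i-z)G_{ii}(z)+\sum_k H_{ik}G_{ki}(z)=1$; subtracting $m_{fc}(z)G_{ii}$ from both sides, summing over $i$, and using the Pastur equation \eqref{self} yields
\begin{align*}
\Tr G(z)-Nm_{fc}(z)=-\sum_i\frac{m_{fc}(z)G_{ii}(z)+\sum_k H_{ik}G_{ki}(z)}{a_i-z-m_{fc}(z)}.
\end{align*}
Multiplying by $\ea$, taking expectation, and applying Lemma \ref{cumulant} to each $\E[H_{ik}\cdot\ea G_{ki}(z)]$ with $\ea G_{ki}$ viewed as a smooth function of $H_{ik}$ produces a hierarchy of terms indexed by the cumulant order of the entries of $H$.

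The leading contribution comes from the second cumulant $c_2=N^{-1}$, where the derivative $\partial_{H_{ik}}$ acts either on $G_{ki}$ or on $\ea$. Acting on $G_{ki}$ gives $-(G_{ii}G_{kk}+G_{ki}^2)$ (up to $\beta$-dependent factors); after summation against $(a_i-z-m_{fc})^{-1}$ and replacement via Theorem \ref{locallaw}, the $G_{ii}G_{kk}$ piece cancels the explicit $m_{fc}G_{ii}$ term on the right-hand side and, upon accounting for the fluctuation of $N^{-1}\Tr G$ around $m_{fc}$, yields the self-consistent feedback that, after inversion, is responsible for the $(1-I)^{-1}$ factor appearing in the $\tfrac{2}{\beta}\pza\bigl((1-I)^{-1}\pzb I\bigr)$ piece of $K$ in \eqref{kernel}. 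Acting on $\ea$ yields
\begin{align*}
\partial_{H_{ik}}\ea=-\frac{2\i\lambda}{\pi}\,\ea\int_{\Omega_{0}}\pzzp\tf(z')\,(G(z')^2)_{ki}\,\dd^2 z',
\end{align*}
and the subsequent summation $\sum_{i,k}(a_i-z-m_{fc}(z))^{-1}(\cdot)$, combined with local-law concentration, reproduces exactly the $z'$-integral against the $\pzab I(z,z')$-type kernel from $K$. The $(m_2-\tfrac{2}{\beta})$ correction in \eqref{kernel} reflects the gap between diagonal and off-diagonal second moments (and the difference between real symmetric and complex Hermitian cases), while $(W_4-1-\tfrac{2}{\beta})$ emerges from the fourth-cumulant contribution; cumulants of order $\geq 5$ are absorbed into $\mathcal{E}(z)$ using the moment bound \eqref{moment condition finite}.

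The error $\mathcal{E}(z)$ is then estimated by bounding every residual term via Theorem \ref{locallaw}, noting that $|a_i-z-m_{fc}(z)|^{-1}=O\bigl((\kappa+\eta)^{-1/2}\bigr)$ uniformly in $i$, which is the origin of the $(\kappa+\eta)^{-1/2}$ prefactor in \eqref{error_sample}, reflecting both the square-root vanishing of $\rho_{fc}$ at the spectral edges and the degeneration of $1-I(z,z)$ as $z\to L_{\pm}$. The $(1+|\lambda|^4)$ dependence arises because iterated derivatives of $\ea$, appearing both in the higher-order cumulant chains and in the Taylor remainder $R_{l+1}$ of Lemma \ref{cumulant} (taken here with $l=3$), each contribute an extra factor of $\lambda$ times an $\Omega_0$-integral that is controlled by Lemma \ref{lemma3}. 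Analyticity of $\mathcal{E}(z)$ on $\Omega_0$ is automatic since only $G(z)$, never $G(\overline{z})$, enters the construction. The principal obstacle is bookkeeping rather than conceptual: the many residual terms involve Green-function entries at different spectral parameters with different weights, and collapsing them into the clean bound \eqref{error_sample} requires systematic use of $\im m_{fc}(z)\sim\sqrt{\kappa+\eta}$ to pass between the $\Psi$ and $\Theta$ rates of \eqref{control}, particularly in the near-edge regime where $\kappa$ and $\eta$ become comparable.
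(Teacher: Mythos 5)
Your route — resolvent identity, cumulant expansion of $\E[H_{ij} G_{ji}\, e_0(\lambda)]$ up to fourth order, concentration via the (anisotropic) local law, and inversion of the resulting self-consistent relation in $\E[e_0(\lambda)(\Tr G-\E\Tr G)]$ — is exactly the paper's proof; summing over $i$ before expanding rather than after is cosmetic. Two of your supporting explanations, however, misidentify the mechanism. First, you claim $|a_i - z - m_{fc}(z)|^{-1} = O\bigl((\kappa+\eta)^{-1/2}\bigr)$ uniformly in $i$ as the source of the $(\kappa+\eta)^{-1/2}$ prefactor in \eqref{error_sample}. This is false: the stability bound \eqref{2} together with Lemma~\ref{distance1} gives $|a_i - z - m_{fc}(z)| \sim 1$ uniformly on $D'$, precisely because Assumption~\ref{assumption2.3} keeps $z+m_{fc}(z)$ at distance of order one from $\mathrm{supp}\,\mu_A$. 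The $(\kappa+\eta)^{-1/2}$ prefactor arises only from dividing the summed self-consistent relation \eqref{newsum} by $1-I_s(z)$, and $|1-I_s(z)| \sim \sqrt{\kappa+\eta}$ by Lemma~\ref{m}. You in fact name the correct culprit in your next clause (``the degeneration of $1-I(z,z)$''); the fix is simply to drop the incorrect bound on $|a_i - z - m_{fc}(z)|^{-1}$.

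Second, you attribute the $(1-I(z,z'))^{-1}$ inside the $\tfrac{2}{\beta}$-piece of the kernel $K$ in \eqref{kernel} to ``the self-consistent feedback $\ldots$ after inversion.'' In the paper's derivation the $\Tr G\, G_{ii}$ term coming from the second cumulant produces the $(1-I_s(z))$ factor on the left of \eqref{newsum}, which after division cancels against a matching $(1-I_s(z))$ already present in the $A_{41}$ contribution. The $(1-I(z,z'))^{-1}$ that survives in $K$ is generated separately: it comes from applying the resolvent identity \eqref{resolvent_identity} to $F(z,z') = \frac{1}{N}\sum_i g_i(z)\,(G(z')G(z))_{ii}$, concentrating to $\frac{I(z,z')-I_s(z)}{z'-z}$ via Theorem~\ref{isotropic}, and then invoking the algebraic identities of Lemma~\ref{Izz}. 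Neither misstatement is fatal to the overall strategy, which matches the paper, but both are wrong as written and should be corrected.
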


Admitting Lemma \ref{lemma4} and plugging in (\ref{phi2}),  we have
$$\phi'(\lambda)=- \lambda \E [\ea] V(f)+O_{\prec}\Big( |\lambda| \log N N^{- \tau} \Big)+\tilde{\mathcal{E}},$$
where 
$$V(f)=\frac{1}{\pi^2} \int_{\Omega_{0}}\int_{\Omega_{0}} \pzz \tf(z)  \pzzp  \tf(z') K(z,z') \dd^2 z \dd^2 z', \qquad \tilde{\mathcal{E}}=\frac{\i}{\pi}  \int_{\Omega_0} \pzz \tf(z)  \mathcal{E}(z) \dd^2 z.$$
By the definition of $\kappa_0$ in (\ref{kappa}), $\kappa \geq \kappa_0$. Moreover $|\eta| \geq N^{-\tau} \eta_0$, for $z\in\Omega_0$. Using Lemma \ref{lemma3}, we hence obtain the estimate 
$$\tilde{\mathcal{E}}=O_{\prec}\Big( \frac{(1+|\lambda|^4)N^{3 \tau}}{N \eta_0 \sqrt{\kappa_0+ \eta_0}}\Big)+O_{\prec}\Big( \frac{(1+|\lambda|^4)N^{2 \tau}}{\sqrt{N \eta_0 \sqrt{\kappa_0+\eta_0}}}\Big).$$ 
Assuming $V(f) \prec O(1)$, we can replace $\ea$ by $e(\lambda)$ with error $O_{\prec}( |\lambda| N^{ -\tau})$. Thus we have completed the proof of Proposition \ref{prop}.

\section{Properties of the free convolution}\label{sec:preliminary}

\subsection{Properties of $m_{fc}$ and $\tilde{m}_{fc}$}

In this subsection, we recall some properties of the Stieltjes transforms $m_{fc}$ and $\tilde{m}_{fc}$ of the free convolution measures.
Let $\kappa=\kappa(E)$ be the distance from $E$ to the closest spectral edge, i.e.,
$$\kappa:=\min \Big\{ |E-L_-|, |E-L_+|  \Big\}.$$
Similarly define $\tilde{\kappa}:=\min \Big\{ |E-\tilde{L}_-|, |E-\tilde{L}_+|  \Big\}$. 
Define the spectral domain
$$D:=\Big\{z=E+\i \eta:  |E|<M, 0< \eta \leq 3 \Big\}.$$
\begin{lemma}\label{previous}(Lemma 3.5, Lemma A.1 in \cite{bulk})
	\begin{enumerate}
		\item For all $z \in D$, there exists $C>1$ such that 
		\begin{equation}\label{1}
		C^{-1}\sqrt{\tilde{\kappa} +\eta}\le|\Im \tilde{m}_{fc}(z)| \le C \sqrt{\tilde{\kappa} +\eta},
		\end{equation}
		if $E \in [\tilde{L}_-,\tilde{L}_+]$. If $E \in [\tilde{L}_-,\tilde{L}_+]^c$, then 
		\begin{equation}\label{12}
		C^{-1}\frac{\eta}{\sqrt{\tilde{\kappa}+\eta}}\le|\Im \tilde{m}_{fc}(z)| \le C \frac{\eta}{\sqrt{\tilde{\kappa}+\eta}}.
                \end{equation}
                
		\item (Stability bound) There exists $C>1$, such that 
		\begin{equation}\label{2}
		C^{-1} \leq  |a-z-\tilde{m}_{fc}(z)| \leq C,
		\end{equation}
		uniformly for $z \in D$ and $a \in \mathrm{supp}(\mu_{\alpha})$ .
		\item For all $z \in D$, there exist $k,K>0$ such that
		\begin{equation}\label{3}
		k \sqrt{\tilde{\kappa} +\eta} \leq \Big| 1-\int_{\R} \frac{1}{(x-z-\tilde{m}_{fc}(z))^2} \dd \mu_{\alpha}(x) \Big| \leq K \sqrt{\tilde{\kappa} +\eta}.
		\end{equation}
		
		\item There exist $C>0$ and $c_0>0$ such that for all $z \in D$ satisfying $\tilde{\kappa}+\eta \leq c_0$,
		\begin{equation}\label{4}
		C^{-1} \leq \Big| \int_{\R} \frac{1}{(x-z-\tilde{m}_{fc}(z))^3} \dd \mu_{\alpha}(x) \Big| \leq C;
		\end{equation}
		moreover, there exists $C>1$ such that for all $z \in D$,
		$$\Big| \int_{\R} \frac{1}{(x-z-\tilde{m}_{fc}(z))^3} \dd \mu_{\alpha}(x) \Big| \leq C.$$
	\end{enumerate}
\end{lemma}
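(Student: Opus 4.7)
The plan is to recast everything through the subordination function $\omega(z)\deq z+\tilde m_{fc}(z)$. The Pastur equation~(\ref{self6}) then reads $\omega(z)-z=G_{\mu_\alpha}(\omega(z))$ with $G_{\mu_\alpha}(w)\deq\int_\R\frac{\dd\mu_\alpha(a)}{a-w}$, so that $\omega$ is a local inverse on $\C^+$ of the holomorphic map $F(w)\deq w-G_{\mu_\alpha}(w)$. Since $F'(w)=1-\int\frac{\dd\mu_\alpha(a)}{(a-w)^2}$ and $F''(w)=-2\int\frac{\dd\mu_\alpha(a)}{(a-w)^3}$, the edge equation~(\ref{L}) becomes $F'(w_\pm)=0$ with $w_\pm\deq\omega(\tilde L_\pm)$, and the three integrals appearing in parts~(2)--(4) are respectively $\omega(z)-a$, $F'(\omega(z))$, and $-\tfrac12 F''(\omega(z))$. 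The lemma therefore reduces to controlling $\omega(z)$ and the first two derivatives of $F$ near $w_\pm$.

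For part~(2), I would first observe that $w_\pm\notin\mathcal I$: by Assumption~\ref{assumption2.3} the integral $\int\frac{\dd\mu_\alpha(a)}{(a-x)^2}$ exceeds $1$ by a fixed amount on $\mathcal I$, so no root of $F'$ can lie there, and compactness then supplies a uniform gap $|w_\pm-a|\geq c$ for $a\in\mathrm{supp}(\mu_\alpha)$. Boundedness of $|\tilde m_{fc}|$ on $D$ (from the Pastur equation together with compactness of $\mathrm{supp}(\mu_\alpha)$) shows that $\omega(D)$ is bounded, which immediately gives the upper bound in~(\ref{2}). The lower bound follows by combining the edge gap with continuity of $\omega$ and the a priori bound $\im\omega(z)\geq\eta>0$ that disposes of $z$ far from the real axis.

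Parts~(1), (3) and~(4) I would treat simultaneously through a local analysis at $w_\pm$. Because $w_\pm$ lies outside $\mathcal I$ and hence to one definite side of $\mathrm{supp}(\mu_\alpha)$, the integrand in $F''(w_\pm)$ has constant sign, which forces $|F''(w_\pm)|\geq c>0$ and establishes the lower bound in~(\ref{4}); the matching upper bound follows at once from the distance estimate already used in part~(2). With $F$ having a genuine quadratic critical point at $w_\pm$, the identity $z-\tilde L_\pm=F(\omega(z))-F(w_\pm)=\tfrac12 F''(w_\pm)(\omega(z)-w_\pm)^2+O\bigl(|\omega(z)-w_\pm|^3\bigr)$ inverts to $\omega(z)-w_\pm\sim(z-\tilde L_\pm)^{1/2}$, with the branch fixed by $\im\omega>0$. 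Substituting this into the first-order Taylor expansion of $F'$ at $w_\pm$ gives $|F'(\omega(z))|\sim|\omega(z)-w_\pm|\sim|z-\tilde L_\pm|^{1/2}\sim\sqrt{\tilde\kappa+\eta}$, which is~(\ref{3}); taking imaginary parts and distinguishing $E\in[\tilde L_-,\tilde L_+]$ from its complement then directly yields~(\ref{1}) and~(\ref{12}).

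The main obstacle I expect is the global-to-local matching: the quadratic expansion at $w_\pm$ is valid only in a neighborhood, while the assertions must hold uniformly throughout $D$. Inside $(\tilde L_-,\tilde L_+)$ but bounded away from the endpoints, the density $\tilde\rho_{fc}$ is analytic and strictly positive, so $\im\tilde m_{fc}\geq c$ and the required bounds are soft; outside the support but far from the edges, one argues similarly using that $\omega$ stays at positive distance from $\mathrm{supp}(\mu_\alpha)$. The nontrivial point is to patch these regimes with the edge expansion so that all constants depend only on the parameters in Assumption~\ref{assumption2.3}. Keeping this quantitative uniformity is essential, since the analogous statements must be obtained also for $\mu_A$ in place of $\mu_\alpha$ with constants independent of $N$.
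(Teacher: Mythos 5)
The paper does not prove this lemma itself; it is cited verbatim from Lee--Schnelli--Stetler--Yau \cite{bulk} (their Lemma 3.5 and Lemma A.1). Your proposal is correct in outline and is essentially the argument given there: passing to the subordination function $\omega=z+\tilde m_{fc}$, viewing $\omega$ as a local inverse of $F(w)=w-m_{\mu_\alpha}(w)$, identifying the edge condition~\eqref{L} with $F'(w_\pm)=0$, using Assumption~\ref{assumption2.3} to place $w_\pm$ at a definite distance from $\mathrm{supp}(\mu_\alpha)$ and to make $F$ a genuine quadratic critical point, and then inverting the square-root expansion near $w_\pm$. The obstacle you flag at the end — patching the local edge expansion to the bulk and far-field regimes with constants depending only on $w$ and the support diameters, so that the same argument applies uniformly to $\mu_A$ in place of $\mu_\alpha$ — is precisely where the technical work in \cite{bulk} lies, so the sketch is sound but the uniformity details remain to be carried out.
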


The following lemma implies that $m_{fc}$ behaves similarly as $\tilde{m}_{fc}$, for sufficiently large $N$.
\begin{lemma}\label{distance1} (Lemma 3.6 in \cite{bulk})
	Under Assumptions 2.2 and 2.3, for sufficiently large~$N$, statements 1-4 in Lemma \ref{previous} hold true with $\tilde{m}_{fc}$, $\tilde{\kappa}$, $\mu_\alpha$ and $\tilde L_\pm$ replaced by $m_{fc}$, $\kappa$, $\mu_A$ and $L_\pm$ respectively. Moreover, the constants in these inequalities can be chosen uniformly in $N$ for sufficiently large $N$. Furthermore, there exists $c>0$ such that
	\begin{equation}\label{differencem}
	\max_{z \in D}\Big| \tilde{m}_{fc}(z)- m_{fc}(z) \Big| \leq  N^{-\frac{c\alpha_0}{2}},\qquad |\tilde{L}_{\pm}-L_{\pm}|\leq  N^{-c\alpha_0},
	\end{equation}
	for sufficiently large $N$.
\end{lemma}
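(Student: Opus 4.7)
The plan is to transfer the properties of $\tilde m_{fc}$ from Lemma~\ref{previous} to $m_{fc}$ by showing that $m_{fc}$ is a small analytic perturbation of $\tilde m_{fc}$, with the quantitative rate~$N^{-\alpha_0}$ coming from Assumption~\ref{assumption_2_deformed}. The two self-consistent equations \eqref{self6} and \eqref{self} differ only through replacing $\mu_\alpha$ by $\mu_A$, so the strategy is a stability analysis in a neighborhood of~$\tilde m_{fc}$, plus a continuity argument to extend the comparison down to the real line.

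First I would fix a small $\eta_*>0$ and compare $m_{fc}$ with $\tilde m_{fc}$ on the ``high'' part of~$D$, where $\eta\ge\eta_*$, so that $z$ stays in a fixed compact subset of $\C^+$ disjoint from~$\mathrm{supp}(\mu_\alpha)$. Writing $\Delta(z)\deq m_{fc}(z)-\tilde m_{fc}(z)$ and subtracting the two Pastur equations yields
\begin{equation*}
\Delta(z)=\int_\R \Big(\tfrac{1}{a-z-m_{fc}}-\tfrac{1}{a-z-\tilde m_{fc}}\Big)\dd\mu_A(a)+\int_\R\tfrac{\dd(\mu_A-\mu_\alpha)(a)}{a-z-\tilde m_{fc}(z)},
\end{equation*}
which after rearrangement reads $\Delta(z)\bigl(1-F(z)\bigr)=R_N(z)$, where $F(z)\deq\int(a-z-m_{fc})^{-1}(a-z-\tilde m_{fc})^{-1}\dd\mu_A(a)$ and $R_N(z)$ is the $\dd(\mu_A-\mu_\alpha)$ integral. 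Assumption~\ref{assumption_2_deformed} bounds $R_N=O(N^{-\alpha_0})$ uniformly, while \eqref{3} and the stability bound \eqref{2} give $|1-F|\gtrsim\sqrt{\tilde\kappa+\eta}\gtrsim \sqrt{\eta_*}$ on this slab; hence $|\Delta(z)|\lesssim N^{-\alpha_0}\eta_*^{-1/2}$.

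Next I would extend this comparison all the way down to the real line by a standard continuity/bootstrap in $\eta$. Since both $m_{fc}$ and $\tilde m_{fc}$ are Lipschitz in $\eta$ on $D$ with a bound that is polynomial in $\eta^{-1}$, one can propagate the estimate $|\Delta(z)|\le N^{-c\alpha_0/2}$ step by step in decreasing $\eta$, using at each level the stability inequality above together with the strict lower bound on $|1-F|$ inherited from \eqref{3} once $\Delta$ is already known to be small. The choice $c\alpha_0/2$ absorbs the degeneration $\sqrt{\tilde\kappa+\eta}$ of $|1-F|$ near the edges; this is the most delicate point, since near the edges one cannot afford a loss worse than a fractional power of $N^{-\alpha_0}$. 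The main obstacle of the argument is precisely to keep the bootstrap consistent at scales where $\sqrt{\tilde\kappa+\eta}$ is polynomially small, which is why the final rate in \eqref{differencem} is only $N^{-c\alpha_0/2}$ rather than $N^{-\alpha_0}$.

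Once $\max_{z\in D}|\Delta(z)|\le N^{-c\alpha_0/2}$ is established, statements (1)--(4) of Lemma~\ref{previous} for $m_{fc}$, $\kappa$, $\mu_A$, $L_\pm$ follow by replacing $\tilde m_{fc}$ by $\tilde m_{fc}+\Delta$ in each bound: the perturbation is negligible relative to the two-sided estimates on $\Im\tilde m_{fc}$, $|a-z-\tilde m_{fc}|$ and the integrals of $(a-z-\tilde m_{fc})^{-k}$ for $k=2,3$, and one checks that all constants can be chosen uniformly in $N$. Finally, for the edge comparison one uses the defining equation \eqref{L}: both $\tilde L_\pm$ and $L_\pm$ are the real solutions of $\int(a-x-m(x))^{-2}\dd\nu(a)=1$ with $\nu=\mu_\alpha,\mu_A$ and $m=\tilde m_{fc},m_{fc}$. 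By item~(4) of Lemma~\ref{previous}, the derivative of this defining function at $\tilde L_\pm$ is bounded away from zero, so the implicit function theorem, applied to the $N^{-\alpha_0}$--perturbation of the input measure and of $m_{fc}$, yields $|\tilde L_\pm-L_\pm|\le N^{-c\alpha_0}$, completing the proof.
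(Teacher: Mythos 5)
The paper does not prove this lemma — it refers to Lemma 3.6 of \cite{bulk} — so there is no in-text proof to compare against; I can only evaluate your reconstruction on its own merits. Your high-level strategy (subtract the two Pastur equations, bound the forcing term $R_N$ via Assumption \ref{assumption_2_deformed} and the stability bound \eqref{2}, divide by $1-F$, then propagate down in $\eta$ by continuity, and treat the edges via the defining equation \eqref{L}) is the right framework. However, the way you close the argument near the edges is where the gap lies, and you more or less acknowledge it without fixing it. The division $|\Delta|\le |R_N|/|1-F|\lesssim N^{-\alpha_0}/\sqrt{\tilde\kappa+\eta}$ simply does not give $|\Delta|\le N^{-c\alpha_0/2}$ uniformly on $D$: the domain $D$ contains points with $\eta$ arbitrarily small, so for $E$ near an edge and $\eta\ll N^{-2\alpha_0}$ the right-hand side is not small at all, regardless of how you tune $c$. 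Choosing the exponent $c\alpha_0/2$ does not ``absorb'' the degeneration of $|1-F|$; a different mechanism is needed at the edge.

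The missing ingredient is the square-root expansion in the inverted variable $\zeta=z+m_{fc}(z)$. Writing $F(\zeta)=\zeta-\frac1N\sum(a_i-\zeta)^{-1}$ and $\widetilde F$ analogously with $\mu_\alpha$, the edges correspond to the critical points $\zeta_\pm,\tilde\zeta_\pm$ with $F'(\zeta_\pm)=0=\widetilde F'(\tilde\zeta_\pm)$, and Assumption \ref{assumption_2_deformed} directly gives $|\zeta_\pm-\tilde\zeta_\pm|=O(N^{-\alpha_0})$ and hence $|L_\pm-\tilde L_\pm|=O(N^{-\alpha_0})$ — this also removes the circularity in your implicit-function-theorem step, where you invoke closeness of $m_{fc}$ and $\tilde m_{fc}$ at the edge in the very argument that is supposed to supply the edge comparison. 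Near the edge one then compares the two square-root expansions $m_{fc}(z)\approx m_{fc}(L_\pm)+c_\pm\sqrt{z-L_\pm}$ and $\tilde m_{fc}(z)\approx \tilde m_{fc}(\tilde L_\pm)+\tilde c_\pm\sqrt{z-\tilde L_\pm}$; since $\sqrt{z-L_\pm}-\sqrt{z-\tilde L_\pm}$ can only be controlled by $\sqrt{|L_\pm-\tilde L_\pm|}$, this is precisely what produces the half-power loss in the exponent, and it closes the bound uniformly in $\eta>0$. Your bootstrap-in-$\eta$ argument is then only needed in the bulk and on an intermediate slab where $\sqrt{\tilde\kappa+\eta}$ is bounded below by a power $N^{-c''\alpha_0}$, where the division by $|1-F|$ does work. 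Without the edge expansion step, the proposal as written cannot produce a bound that holds for all $z\in D$.
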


Recall the function $I(z_1,z_2)$ given in (\ref{iii}) and $I_s(z)$ in (\ref{Is}). By direct computation, one proves the following lemma.
\begin{lemma}\label{Izz}
	For $z_1 \neq z_2$, we have
	\begin{equation}\label{11}
	I(z_1,z_2)=\frac{m_{fc}(z_1)-m_{fc}(z_2)}{z_1+m_{fc}(z_1)-z_2-m_{fc}(z_2)};\qquad I_s(z) = \frac{ m'_{fc}(z)}{1+m'_{fc}(z)}.
	\end{equation}
\end{lemma}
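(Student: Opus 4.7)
The plan is to verify the two identities by direct manipulation using only the Pastur self-consistency relation \eqref{self}, namely
$$
m_{fc}(z)=\int_{\R}\frac{d\mu_A(a)}{a-z-m_{fc}(z)}.
$$

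For the first identity, I would apply partial fractions. Setting $w_j\deq z_j+m_{fc}(z_j)$ for $j=1,2$ and noting that $z_1\ne z_2$ together with $\Im m_{fc}(z_j)>0$ (for $z_j\in\C^+$) forces $w_1\ne w_2$, we have
$$
\frac{1}{(x-w_1)(x-w_2)}=\frac{1}{w_1-w_2}\Bigl(\frac{1}{x-w_1}-\frac{1}{x-w_2}\Bigr).
$$
Integrating against $d\mu_A(x)$ and invoking \eqref{self} to recognise each of the two integrals as $m_{fc}(z_1)$ and $m_{fc}(z_2)$ respectively, we obtain
$$
I(z_1,z_2)=\frac{m_{fc}(z_1)-m_{fc}(z_2)}{w_1-w_2}=\frac{m_{fc}(z_1)-m_{fc}(z_2)}{z_1+m_{fc}(z_1)-z_2-m_{fc}(z_2)},
$$
which is the claimed formula. (For real $z_1,z_2$ in the resolvent set where $m_{fc}$ extends analytically, the same derivation applies, with the case $w_1=w_2$ excluded via the stability bound \eqref{2} of Lemma~\ref{previous}, here in its $m_{fc}$-version.)

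For the second identity, the quickest route is to differentiate the self-consistent equation \eqref{self} in $z$. Since $m_{fc}$ is analytic on $\C^+$, differentiating under the integral gives
$$
m_{fc}'(z)=\int_{\R}\frac{1+m_{fc}'(z)}{(a-z-m_{fc}(z))^{2}}\,d\mu_A(a)=\bigl(1+m_{fc}'(z)\bigr)I_s(z).
$$
The quantity $1+m_{fc}'(z)$ does not vanish on $\C^+$: indeed $I_s(z)=m_{fc}'(z)/(1+m_{fc}'(z))$ together with \eqref{3}--\eqref{4} (in the $m_{fc}$-version guaranteed by Lemma~\ref{distance1}) yields a nonzero denominator, so we may solve for $I_s$ to get
$$
I_s(z)=\frac{m_{fc}'(z)}{1+m_{fc}'(z)}.
$$
Alternatively, the same formula falls out by taking the limit $z_2\to z_1=z$ in the first identity, dividing numerator and denominator by $z_1-z_2$.

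There is no substantive obstacle here: the statement is a routine consequence of the Pastur equation, and the only minor point requiring care is the non-vanishing of $w_1-w_2$ and of $1+m_{fc}'(z)$, both of which follow from the quantitative stability bounds collected in Lemma~\ref{previous} and Lemma~\ref{distance1}.
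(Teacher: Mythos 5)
Your proof is correct and follows essentially the same route as the paper: both proofs rest on the Pastur self-consistency equation~\eqref{self}, obtaining the first identity by the algebraic decomposition of $\frac{1}{(a-w_1)(a-w_2)}$ into the difference of two Stieltjes kernels (the paper expands $m_{fc}(z_1)-m_{fc}(z_2)$ while you apply partial fractions to the integrand of $I$ — the same manipulation in the opposite direction), and the second identity by differentiating~\eqref{self} in $z$. Your additional remarks on the nondegeneracy of $w_1-w_2$ and $1+m_{fc}'(z)$ are harmless and correct, though the paper leaves them implicit.
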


As a result of Lemmas \ref{previous}, \ref{distance1} and \ref{Izz}, we have the following lemma.

\begin{lemma}\label{m}
	There exists $C>1$ such that
	$$ |I(z_1,z_2)| \leq C; \qquad |I_s(z)| \leq 1; \qquad C^{-1} \sqrt{\kappa+\eta} \leq |1-I_s(z)| \leq C \sqrt{\kappa+\eta};$$
	$$ |m_{fc}(z)| \leq C; \qquad |m_{fc}'(z)| \leq \frac{C}{\sqrt{ \kappa+\eta}};\qquad |m_{fc}''(z)| \leq \frac{C}{\sqrt{( \kappa+\eta)^3}},$$
	uniformly for $z, z_1, z_2 \in D$.
\end{lemma}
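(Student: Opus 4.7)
The plan is to derive each bound by a routine manipulation, pulling together the Pastur equation \eqref{self}, the identities in Lemma~\ref{Izz}, and the quantitative estimates of Lemma~\ref{previous}, which by Lemma~\ref{distance1} hold verbatim for $m_{fc}$, $\kappa$, $\mu_A$ and $L_\pm$ in place of $\tilde m_{fc}$, $\tilde\kappa$, $\mu_\alpha$ and $\tilde L_\pm$. Throughout I will silently use the stability bound $C^{-1}\le |a_i-z-m_{fc}(z)|\le C$ for $a_i\in\mathrm{supp}(\mu_A)$, the analogue of \eqref{2}.

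First I dispatch the bounds that do not involve derivatives. For $|m_{fc}(z)|\le C$ I simply apply the stability bound to each summand of \eqref{self}, getting $|m_{fc}(z)|\le N^{-1}\sum_i|a_i-z-m_{fc}(z)|^{-1}\le C$. For $|I(z_1,z_2)|\le C$ I use the defining integral \eqref{iii} and bound the integrand by the stability estimate applied to each factor in the denominator, so $|I(z_1,z_2)|\le C^2$. For $|I_s(z)|\le 1$ the clean way is to take imaginary parts of \eqref{self}: writing $w_i\deq a_i-z-m_{fc}(z)$ one has $\Im w_i=-\eta-\Im m_{fc}(z)$ and therefore
\begin{equation*}
\Im m_{fc}(z)\;=\;\frac{1}{N}\sum_i\frac{\eta+\Im m_{fc}(z)}{|w_i|^2}\,,
\end{equation*}
whence $|I_s(z)|\le N^{-1}\sum_i|w_i|^{-2}=\Im m_{fc}(z)/(\eta+\Im m_{fc}(z))\le 1$. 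The two-sided bound $C^{-1}\sqrt{\kappa+\eta}\le|1-I_s(z)|\le C\sqrt{\kappa+\eta}$ is just \eqref{3} transported to $(m_{fc},\kappa,\mu_A)$ via Lemma~\ref{distance1}.

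For the derivatives I differentiate the Pastur equation \eqref{self}, obtaining $m'_{fc}(z)=(1+m'_{fc}(z))I_s(z)$, equivalently $m'_{fc}(z)=I_s(z)/(1-I_s(z))$ and $1+m'_{fc}(z)=1/(1-I_s(z))$; this is the relation in Lemma~\ref{Izz}. Combining with $|I_s(z)|\le 1$ and the lower bound on $|1-I_s(z)|$ yields $|m'_{fc}(z)|\le C/\sqrt{\kappa+\eta}$. Differentiating once more and writing $J(z)\deq\int(x-z-m_{fc}(z))^{-3}\,\dd\mu_A(x)$, I get
\begin{equation*}
m''_{fc}(z)\;=\;\frac{2(1+m'_{fc}(z))^2\,J(z)}{1-I_s(z)}\;=\;\frac{2\,J(z)}{(1-I_s(z))^3}\,,
\end{equation*}
and the upper bound $|J(z)|\le C$ from \eqref{4} (applied to $m_{fc},\mu_A$ via Lemma~\ref{distance1}) gives $|m''_{fc}(z)|\le C(\kappa+\eta)^{-3/2}$.

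There is no real obstacle in this proof: each bound reduces, after one or two lines of algebra based on the identities of Lemma~\ref{Izz} and the Pastur equation, to an input already recorded in Lemma~\ref{previous}. The only mildly delicate step is the derivation of $|I_s(z)|\le 1$, where one must remember to take imaginary parts of the Pastur equation rather than try to bound $I_s$ directly from its integral representation, and the computation of $m''_{fc}$, where the cancellation producing the factor $(1-I_s)^{-3}$ is what explains the $(\kappa+\eta)^{-3/2}$ scaling and must be verified carefully.
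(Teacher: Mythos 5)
Your proof is correct and follows essentially the same route as the paper's: you derive $|I_s|<1$ from the imaginary part of the Pastur equation, get $m_{fc}'=I_s/(1-I_s)$ and $m_{fc}''=2J/(1-I_s)^3$ by differentiating \eqref{self}, and then invoke the stability bound \eqref{2} and the estimates \eqref{3}--\eqref{4} (carried over to $m_{fc},\mu_A,\kappa$ by Lemma~\ref{distance1}). The only cosmetic difference is that the paper bounds $J=N^{-1}\sum_i(a_i-z-m_{fc})^{-3}$ directly from \eqref{2} rather than via \eqref{4}, but both give $|J|\le C$.
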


The proof of the above two lemmas can be found in Appendix B.

\subsection{Properties of the Green function}
As a more general version of the local law in Theorem \ref{locallaw}, we introduce the anisotropic local law. Recall the control parameters $\Psi$ and $\Theta$ from (\ref{control}).
\begin{theorem}\label{isotropic}(Theorem 12.2, 12.4 in \cite{isotropic}; Theorem 2.1, 2.2 in \cite{isotropic2}; Theorem 2.6 in \cite{isotropic3})
	For any deterministic vector $v,w \in \C^N$ and matrix $B \in \C^{N \times N}$, we have
	$$\Big| \langle v, G(z) w \rangle  - \langle v, \widehat{G}(z) w \rangle \Big| \prec \|v\|_2 \|w\|_2 \Psi(z),\qquad \Big| N^{-1} \Tr (B (G(z)-\widehat{G}(z)))  \Big| \prec \|B\|_{\mathrm{op}} \Theta(z),$$
	uniformly in $z \in \Big\{z=E+\ii \eta:  |E| \leq \rho^{-1}, N^{-1+\rho} \leq \eta \leq  \rho^{-1} \Big\}$, where $\rho$ is small so that $\rho^{-1} \geq \|A\|_{\mathrm{op}}$, and $\widehat{G}=\mathrm{Diag} \Big(\frac{1}{a_i-z-m_{fc}(z)} \Big).$
	
\end{theorem}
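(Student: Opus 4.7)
The plan is to establish the anisotropic local law by combining the entrywise local law of Theorem \ref{locallaw} with a high-moment estimate obtained via the cumulant expansion of Lemma \ref{cumulant}. I would work uniformly in the spectral domain $D'$ and bootstrap from the weaker entrywise bound to the stronger isotropic and averaged bounds, exploiting the stability of the self-consistent equation established in Lemmas \ref{previous}--\ref{m}.

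For the isotropic bound, fix unit vectors $v, w \in \C^N$ and set $X := \langle v, (G - \widehat G) w\rangle$. I would compute $\E |X|^{2p}$ for a large integer $p$, writing
\begin{equation*}
\E |X|^{2p} = \sum_{i,j} \overline{v_i}\, w_j \, \E\bigl[(G_{ij} - \widehat G_{ij}) |X|^{2p-2} \overline{X}\bigr] + \text{c.c.}
\end{equation*}
The key input is the resolvent identity $\sum_k H_{ik} G_{kj} = \delta_{ij} + (z - a_i) G_{ij}$, which lets me replace $G_{ij}$ by $\widehat G_{ij}$ plus error terms of the form $\sum_k H_{ik} G_{kj}$. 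Applying Lemma \ref{cumulant} to the factors $H_{ik}$, the second cumulant generates the linear self-consistent operator acting on $(G - \widehat G)$, while higher cumulants yield negligible contributions using the moment bound \eqref{moment condition finite}. Inverting this operator via the stability bound \eqref{3} of Lemma \ref{previous} together with Lemma \ref{m} gives $\E |X|^{2p} = O((\|v\|_2 \|w\|_2 \Psi)^{2p})$, after which stochastic domination follows from Markov's inequality and Definition \ref{definition of stochastic domination}.

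For the averaged bound, I would apply the same moment method to $Y := N^{-1} \Tr(B(G - \widehat G))$ for $\|B\|_{\mathrm{op}} \leq 1$. The improvement from $\Psi$ to $\Theta = (N|\eta|)^{-1}$ arises from an extra cancellation: summing over the free index produced by the trace and applying Ward's identity $\sum_i |G_{ij}|^2 = \eta^{-1}\Im G_{jj}$ contributes a factor $\Im m_{fc}/(N\eta)$, which combined with the a priori control $\Im m_{fc}/(N\eta) = O(\Psi^2)$ collapses one power of $\Psi$ into the sharper $\Theta$. The inductive coupling between the isotropic bound (needed to control the diagonal $\Im G_{jj}$) and the averaged bound is resolved by a continuity/bootstrap argument at the level of moments.

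The main obstacle I anticipate is handling the edge regime, where $|1 - I_s(z)| \sim \sqrt{\kappa + \eta}$ is small by Lemma \ref{m} and the inverse of the linearized self-consistent operator deteriorates. The standard resolution, pursued in \cite{isotropic, isotropic2, isotropic3}, is to track subleading terms and expand the nonlinear self-consistent equation to cubic order, exploiting the bound \eqref{4} of Lemma \ref{previous} on the cubic term of the equation to close the estimate with the sharpened error $\Psi$ rather than a deteriorated one. This cubic structure is precisely what captures the square-root decay of $\rho_{fc}$ at $L_\pm$ and constitutes the bulk of the technical work in the cited references.
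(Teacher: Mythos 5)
The paper does not prove this theorem at all: as the parenthetical in the statement makes explicit, it is imported verbatim from Knowles--Yin \cite{isotropic}, Erd\H{o}s--Kr\"uger--Schr\"oder \cite{isotropic2}, and Alt--Erd\H{o}s--Kr\"uger--Schr\"oder \cite{isotropic3}, and the present paper treats it as a black box. There is therefore no ``paper's own proof'' to compare against. Your proposal is a reasonable high-level summary of the strategy used in those references (high-moment estimate for the isotropic quantity via cumulant expansion, Ward identity to upgrade to the averaged bound, bootstrap between the two, and a cubic expansion of the self-consistent equation near the edge where the linearization degenerates as $\sqrt{\kappa+\eta}$), and you correctly acknowledge that the heavy lifting is done in the cited works. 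But as written it is a roadmap, not a proof: the inversion of the linearized self-consistent operator at the level of $2p$-th moments, the treatment of the off-diagonal vs.\ diagonal contributions in the isotropic estimate, the self-improving a priori bound needed to start the bootstrap, and the precise mechanism by which the cubic term rescues the edge estimate are all nontrivial and are only named, not executed. For the purposes of this paper none of that work is required --- the theorem is correctly stated as a citation and used as such --- so attempting to re-derive it here is unnecessary; if you do want to internalize the argument, the right reference for the specific deformed Wigner setting is \cite{locallaw} together with the anisotropic extension in \cite{isotropic}, Section 12.
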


\section{Proof of Lemma \ref{lemma4}}\label{sec:proof_of_main_lemma}

For the simplicity of the presentation, we consider only the real symmetric case here. The complex case being similar is proved in Appendix A.  For notational simplicity, let 
\begin{equation}\label{gi}
g_i(z):=\frac{1}{a_i-z-m_{fc}(z)}, \qquad z \in \C \setminus \mathrm{supp}(\mu_{fc}).
\end{equation}

Before we proceed the proof of Lemma \ref{lemma4}, we state a useful lemma. 

\begin{lemma}\label{lemma2}

For any $i,j$, we have	
\begin{align}
	\frac{\partial \ea}{\partial H_{ij}}&=-\frac{\ii (2-\delta_{ij})\lambda}{\pi}  \ea  \int_{\Omega_{0}}\pzz \tf(z) \frac{\dd}{\dd z}G_{ji} \dd^2 z;\label{lemma21}\\
	\frac{\partial^2 \ea}{\partial^2 H_{ij}}&=\frac{\ii (2-\delta_{ij}) \lambda}{\pi} \ea  \int_{\Omega_{0}}\pzz \tf(z) \frac{\dd}{\dd z} (g_i(z) g_j(z)) \dd^2 z+O_{\prec} \Big( \frac{(1+|\lambda|)^2}{\sqrt{N \eta_0}}\Big).\label{lemma22}
	\end{align}
	In general, for any integer $k \in \N$, we have
	\begin{equation}\label{dke}
	\Big| \frac{\partial^k G_{ij}}{\partial H^k_{ij}} \Big| \prec O(1); \quad \Big| \frac{\partial^k  \ea}{\partial^k H_{ij}} \Big| \prec O((1+|\lambda|)^k).
	\end{equation}
\end{lemma}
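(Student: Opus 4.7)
The plan is to prove all three statements by direct differentiation of $\ea = \exp(F)$, where $F := \frac{\ii\lambda}{\pi}\int_{\Omega_0}\pzz\tf(z)(\Tr G(z)-\E\Tr G(z))\,d^2z$, and then use the isotropic local law (Theorem~\ref{isotropic}) to replace Green function entries by their deterministic counterparts $g_i$. For~(\ref{lemma21}), the symmetric-matrix derivative rule $\partial_{H_{ij}}G_{kl}=-G_{ki}G_{jl}-G_{kj}G_{il}$ for $i\neq j$ (with the obvious diagonal modification when $i=j$) combined with $(G^2)_{ji}=\partial_z G_{ji}$ gives $\partial_{H_{ij}}\Tr G(z) = -(2-\delta_{ij})\partial_z G_{ji}(z)$. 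Since $\E\Tr G$ is deterministic, the chain rule $\partial_{H_{ij}}\ea = (\partial_{H_{ij}}F)\ea$ yields~(\ref{lemma21}) at once.

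For~(\ref{lemma22}) I differentiate once more, obtaining $\partial_{H_{ij}}^2\ea = [\partial_{H_{ij}}^2F+(\partial_{H_{ij}}F)^2]\,\ea$. A second application of the resolvent rule gives $\partial_{H_{ij}}^2\Tr G(z) = (2-\delta_{ij})\partial_z(G_{ji}^2+G_{ii}G_{jj})$. The entrywise local law supplies $G_{ii}G_{jj}=g_ig_j+O_\prec(\Psi)$ and $|G_{ji}|\prec\Psi$ for $i\neq j$, so inserting the deterministic leading piece into $\partial_{H_{ij}}^2F\cdot\ea$ produces the claimed main term $\frac{\ii(2-\delta_{ij})\lambda}{\pi}\ea\int\pzz\tf\,\partial_z(g_ig_j)\,d^2z$. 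Three errors remain: (a)~the replacement error $G_{ii}G_{jj}-g_ig_j=O_\prec(\Psi)$; (b)~the term $G_{ji}^2$ for $i\neq j$, which is $O_\prec(\Psi^2)$; and (c)~the square $(\partial_{H_{ij}}F)^2$.

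Each of these reduces to estimating integrals of the form $\int_{\Omega_0}\pzz\tf(z)\cdot\partial_z h(z)\,d^2z$ with $h$ holomorphic on $\Omega_0$. Exploiting that $G(z)$ is holomorphic throughout $\Omega_0$ (which stays at distance $\geq N^{-\tau}\eta_0$ from the spectrum), Stokes' theorem converts this into a one-dimensional boundary integral along $\Im z=\pm N^{-\tau}\eta_0$; integrating by parts in~$x$ (the boundary-in-$x$ contribution vanishes by compact support of~$\tf$) gives
\[
\Bigl|\int_{\Omega_0}\pzz\tf(z)\,\partial_z h(z)\,d^2z\Bigr| \leq \int\bigl(|f'(x)|+N^{-\tau}\eta_0\,|f''(x)|\bigr)\,\bigl|h(x\pm\ii N^{-\tau}\eta_0)\bigr|\,dx.
\]
Combining $\|f'\|_1=O(1)$, $\eta_0\|f''\|_1=O(1)$ (from~(\ref{assumpf})), and $|h|\prec\Psi\sim(N\eta_0)^{-1/2}$ on the boundary yields the $O_\prec((N\eta_0)^{-1/2})$ bound for~(a) and~(b). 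Applying the same argument to $\partial_{H_{ij}}F$ itself with $h=G_{ji}$ gives $\partial_{H_{ij}}F=O_\prec((1+|\lambda|)/\sqrt{N\eta_0})$ for $i\neq j$; the diagonal case $i=j$ is handled by the same reduction together with $\int f'\,dx=0$ and a Taylor expansion of $g_i$ around~$E_0$ to exploit the smoothness of the deterministic leading term. Squaring produces the claimed $O_\prec((1+|\lambda|)^2/\sqrt{N\eta_0})$ error from~(c).

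Finally, for~(\ref{dke}) I argue by induction on~$k$: each $H_{ij}$-differentiation of a product of Green function entries produces a finite sum of products (via the resolvent rule), and the entrywise local law $|G_{pq}|\prec 1$ gives $|\partial_{H_{ij}}^k G_{kl}|\prec 1$. For~$\ea$, Fa\`a di Bruno's formula expresses $\partial_{H_{ij}}^k\ea$ as~$\ea$ times a polynomial of degree~$\le k$ in the derivatives $\{\partial_{H_{ij}}^m F\}_{1\le m\le k}$; each of these is $O_\prec(1+|\lambda|)$ by the preceding arguments, yielding $O_\prec((1+|\lambda|)^k)$. The principal obstacle in the whole proof is the sharp bound on $\partial_{H_{ij}}F$: the naive estimate via Lemma~\ref{lemma3} produces only $O(\log N)$, and the improvement to~$(N\eta_0)^{-1/2}$ genuinely requires both the analyticity of~$G$ on~$\Omega_0$ and the isotropic local-law bound $|G_{ji}|\prec\Psi$, combined through the Stokes-based boundary reduction described above.
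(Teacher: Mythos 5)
You follow essentially the same line as the paper's proof in Appendix~\ref{appendix B}: differentiate $\ea$ by the chain rule and the resolvent identity~(\ref{dH}), use the (an)isotropic local law to insert the deterministic limits $g_i$, and control the $\Omega_0$-integrals by Stokes' formula combined with holomorphy of the resolvent inside~$\Omega_0$. Your one genuine variation is the integral estimate: the paper bounds $(G^2)_{ji}=\partial_z G_{ji}\prec\Psi(z)/|\Im z|$ via the Cauchy integral formula and then invokes the $\log N$-improved case of Lemma~\ref{lemma3} with $s=3/2$, whereas you integrate by parts once more along $\partial\Omega_0$ to shift $\partial_z$ off $G_{ji}$ onto $\tf$. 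That is a valid alternative, but on $\partial\Omega_0$ one has $|\Im z|=N^{-\tau}\eta_0$, so $\Psi\prec N^{\tau/2}(N\eta_0)^{-1/2}$ rather than $(N\eta_0)^{-1/2}$ as you claim; your route therefore incurs an extra $N^{\tau/2}$ that the paper's argument avoids, and is strictly weaker than the stated~(\ref{lemma22}).

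Two further slips deserve flagging. Your displayed identity $\partial^2_{H_{ij}}\Tr G=(2-\delta_{ij})\partial_z(G_{ji}^2+G_{ii}G_{jj})$ drops the $(1+\delta_{ij})^{-1}$ in the resolvent rule; the correct expression is $\tfrac{2-\delta_{ij}}{1+\delta_{ij}}\partial_z(G_{ji}G_{ij}+G_{ii}G_{jj})$, which at $i=j$ gives $\partial_z(G_{ii}^2)$, half of what your formula yields — followed literally, it would produce twice the leading term of~(\ref{lemma22}) for $i=j$. And your treatment of $(\partial_{H_{ii}}F)^2$ is not substantiated: the Taylor-expansion device with $\int f'\,\dd x=0$ leaves a deterministic contribution of order $\eta_0/\sqrt{\kappa_0+\eta_0}$, whose square need not be $O_\prec((N\eta_0)^{-1/2})$ unless $\eta_0$ is small. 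The paper is itself terse on the diagonal case (and the point is immaterial downstream, since the $i=j$ terms in $I_2,I_3$ enter with suppressed $N^{-3/2}$ and $N^{-2}$ weights), but the clean reduction you advertise does not quite close the square-term estimate as stated.
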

\noindent The above lemma follows from the relation
	\begin{equation}\label{dH}
	\frac{\partial G_{ij}}{\partial H_{ab}}=-\frac{G_{ia} G_{bj}+G_{ib}G_{aj}}{1+\delta_{ab}}.
	\end{equation}
The details are provided in Appendix~\ref{appendix B}. Now we are ready to prove Lemma \ref{lemma4}.

\begin{proof}[Proof of Lemma \ref{lemma4}]
	By the definition of the resolvent function, we have
	$$(z-a_i) G_{ii}=(HG)_{ii}-1.$$
	Thus we obtain that
	\begin{align*}
	(z-a_i) \E [\ea(G_{ii}-\E G_{ii})]=\sum_{j=1}^N \Big( \E [ H_{ij}G_{ji} \ea ]- \E [H_{ij}G_{ji}] \E [\ea] \Big).
	\end{align*}
\noindent Using the cumulant expansion Theorem \ref{cumulant}, we obtain
\begin{align}\label{sum}
(z-a_i) \E [\ea(G_{ii}-\E G_{ii})]=I_1+I_2+I_3+O_{\prec}(N^{-\frac{3}{2}}(1+|\lambda|^{4})),
\end{align}	
where
\begin{align*}
I_1:&=\frac{1}{N}\sum_{j=1}^N c^{(2)}_{ij} \bigg( \E \Big[\frac{\partial  \ea}{\partial H_{ij}} G_{ji} \Big]+\E \Big[ \Big( \frac{\partial G_{ji} }{\partial H_{ij}} - \E \Big[\frac{\partial G_{ji} }{\partial H_{ij}}  \Big] \Big) \ea \Big] \bigg);\\
I_2:&=\frac{1}{2! N^{\frac{3}{2}}}\sum_{j=1}^N c^{(3)}_{ij} \bigg( \E \Big[\frac{\partial^2 \ea}{\partial^2 H_{ij}} G_{ji}  \Big]+2\E \Big[\frac{\partial \ea}{\partial H_{ij}} \frac{\partial G_{ji}}{\partial H_{ij}} \Big]+\E \Big[(1-\E)\Big(\frac{\partial^2 G_{ji} }{\partial^2 H_{ij}} \Big) \ea \Big] \bigg);\\
I_3:&=\frac{1}{3! N^{2}}\sum_{j=1}^N c^{(4)}_{ij} \bigg( \E \Big[\frac{\partial^3  \ea}{\partial^3 H_{ij}} G_{ji} \Big]+3 \E \Big[\frac{\partial^2  \ea}{\partial^2 H_{ij}} \frac{\partial G_{ji}}{\partial H_{ij}} \Big]+3\E \Big[\frac{\partial  \ea}{\partial H_{ij}} \frac{\partial^2 G_{ji}}{\partial^2 H_{ij}} \Big]\\
&\qquad+\E \Big[ \Big( \frac{\partial^3 G_{ji} }{\partial^3 H_{ij}}-\E \Big[\frac{\partial^3 G_{ji} }{\partial^3 H_{ij}}  \Big]\Big) \ea \Big]  \bigg).
\end{align*}

\noindent Here $c^{(k)}_{ij}$ denotes the $k$-th cumulant of $\sqrt{N} H_{ij}$. In particular, 
	$$c^{(1)}_{ij}=0; \qquad c^{(2)}_{ij}=1+(m_2-1)\delta_{ij}; \qquad c^{(4)}_{ij}=W_4-3\quad (i \neq j).$$
The last term on the right side of (\ref{sum}) is estimated by (\ref{dke}),~\eqref{moment condition finite} and Lemma \ref{dominant}. Note that for $z \in \Omega_0 \cap D'$, we have the deterministic bound $|G_{ij}| \leq \|G\|_{\mathrm{op}}\leq ({\Im}z)^{-1}=O(N^c)$. Combining with $|\ea|=1$, we can use the last statement of Lemma \ref{dominant}. We will use this argument throughout the proof. The error terms in this section are all uniform in $z \in \Omega_{0} \cap D'$. In the following, we estimate $I_1$, $I_2$ and $I_3$ respectively.

	\subsection{Estimate on $I_1$}
	Using (\ref{dH}), we have for each $i$, 
	\begin{align*}
	I_1=&-\frac{1}{N} \E [\ea ((G^2)_{ii} -\E (G^2)_{ii})] -\frac{1}{N} \E [\ea (\Tr G G_{ii} -\E \Tr G G_{ii}) ]\\
	 &-\frac{m_2-2}{N} \E [\ea (G_{ii} G_{ii}-\E G_{ii} G_{ii} )]
	+\frac{1}{N} \sum_{j=1}^N (1+(m_2-1) \delta_{ij}) \E\Big[ \frac{\partial \ea}{ \partial H_{ij}} G_{ji}\Big]\\=:&A_1(i)+A_2(i)+A_3(i)+A_4(i).
	\end{align*}
	The first term can be written as
	$$A_1(i)=-\frac{1}{N} \E \Big[ \ea  (1-\E) \frac{\dd}{\dd z}   (G(z))_{ii}\Big] \prec \frac{\Psi(z)}{{N\Im}z},$$
	with $\Psi(z)$ as in (\ref{control}). The last step follows from the local law and the Cauchy integral formula.
	Similarly using the local law, the second term $A_2(i)$ can be written as
	\begin{align*}
	A_2(i)&=-\frac{1}{N} \E [\ea (\Tr G (G_{ii} - \E G_{ii}) + \E G_{ii}(\Tr G-\E \Tr G) +\E \Tr G \E G_{ii}-\E \Tr G G_{ii})  ] \\
	&=- m_{fc}(z) \E [\ea (G_{ii}-\E G_{ii})]-\frac{1}{N}g_i(z) \E [\ea (\Tr G- \E \Tr G)]+O_{\prec}( \Theta(z) \Psi(z)),
	\end{align*}
	with $\Theta(z)$ as in (\ref{control}) and $g_i(z)$ in (\ref{gi}). Here the first term of $A_2$ will be moved to the left side of the equation (\ref{sum}).
	In addition, the local law also implies that $A_3(i)=O_{\prec}\Big(\frac{\Psi(z)}{N}\Big).$
	
	Note that $A_4$ is a leading term of $I_1$. Using the local law, (\ref{lemma21}) and Lemma \ref{dke}, we write
	\begin{align*}
	A_4(i)=&A_{41}(i)+A_{42}(i)+O_{\prec} \Big((1+|\lambda|)N^{-1} \Psi(z) \Big),
	\end{align*}
	where
	\begin{align}\label{a4}
A_{41}(i)=\frac{1}{N} \sum_{j=1}^N  \E\Big[ \frac{\partial \ea}{ \partial H_{ij}} (1+ \delta_{ij})G_{ji}\Big],\quad \mbox{and}\quad A_{42}(i)=\frac{m_2-2}{N} \E\Big[ \frac{\partial \ea}{ \partial H_{ii}} g_i(z)\Big].
	\end{align}
	
	We compute these two terms below in the Section 5.4.

	\subsection{Estimate on $I_2$}
	In this subsection, we will show that $I_2$ is negligible, which can be written as
	\begin{align*}
	&\quad \frac{1}{2 N^{\frac{3}{2}}} \sum_{j=1}^N c^{(3)}_{ij} \Big(  \E \Big[\frac{\partial^2  \ea}{\partial^2 H_{ij}} G_{ji} \Big] +2\E \Big[\frac{\partial  \ea}{\partial H_{ij}} \frac{\partial  G_{ji}}{\partial H_{ij}}  \Big]  +\E\Big[ \ea \Big( \frac{\partial^2  G_{ji}}{\partial^2 H_{ij}} -\E \frac{\partial^2  G_{ji}}{\partial^2 H_{ij}} \Big) \Big] \Big)\\
	&=:B_1(i)+B_2(i)+B_3(i).
	\end{align*}

	First, we study the last term $B_3(i)$. Using (\ref{dH}) and the local law, we have
	\begin{align*}
	B_3(i)&=-\frac{1}{2 N^{\frac{3}{2}}} \sum_{j=1}^N c_{ij}^{(3)} \E \Big[ \ea \Big(  6 G_{ii}G_{jj} G_{ij} +2 (G_{ij})^{3}-6 \E [G_{ii}G_{jj} G_{ij}] -2 \E(G_{ij})^{3}   \Big) \Big]\\
	&=-\frac{3}{N^{\frac{3}{2}}} \sum_{j=1}^N \E \Big[ \ea c_{ij}^{(3)} g_i(z) g_j(z) ( G_{ij} -\E G_{ij}) \Big] +O_{\prec} \Big( N^{-\frac{1}{2}} \Psi^2(z) \Big).
	\end{align*}
	Next, we estimate $\frac{1}{\sqrt{N}} \sum_{j=1}^N c^{(3)}_{ij} g_j(z) G_{ij}$, using the anisotropic local law Theorem \ref{isotropic}. Let $v_j=\delta_{ij}$ and $w_j=\frac{1}{\sqrt{N}}c^{(3)}_{ij} g_j(z).$ And $\|w\|_2$ is bounded because of the stability bound (\ref{2}) and the moment condition~\eqref{moment condition finite}. 	
	
	Note that Theorem \ref{isotropic} holds for vector entries $w_j$ and $v_j$ that are deterministic constants. As in our setting $w_j$ depend on $z$, we use a continuity argument to show that 
		\begin{equation}\label{estimate}
		\Big| \frac{1}{\sqrt{N}} \sum_{j=1}^N c^{(3)}_{ij} g_j(z) G_{ij}(z) -\frac{1}{\sqrt{N}} c^{(3)}_{ii} (g_i(z))^2 \Big| \prec \Psi(z)\,,
		\end{equation}
uniformly in $z\in D'$. Indeed, choose a lattice $\Delta$ of the domain $D'$ in (\ref{ddd}), with $|\Delta|=N^{100}$. Then for any $z \in D'$, there exists some point $p \in \Delta$, such that $|z-p| \leq N^{-10}$. The anisotropic local law (\ref{isotropic}) combined with a union bound implies 
\begin{equation}\label{cont_argument}
\mathbb{P}\Big( \exists p \in \Delta: \Big| \frac{1}{\sqrt{N}} \sum_{j=1}^N c^{(3)}_{ij} g_j(p) G_{ij}(p) -\frac{1}{\sqrt{N}} c^{(3)}_{ii} (g_i(p))^2 \Big| \geq N^{\epsilon} \Psi(p)\Big) \leq N^{-D+100}.
\end{equation}
Recall that $g_j(z)=\frac{1}{a_j-z-m_{fc}(z)}$. Using (\ref{2}), Lemma \ref{m} and the fact that $|G_{ij}(z)| \leq \frac{1}{\eta}$, the function $\frac{1}{\sqrt{N}} \sum_{j=1}^N c^{(3)}_{ij} g_j(z) G_{ij}(z)-\frac{1}{\sqrt{N}} c^{(3)}_{ii} (g_i(z))^2$ as well as $\Psi(z)$ are Lipschitz continuous on $D'$ with Lipschitz constant at most $N^3$. Thus we obtain from (\ref{cont_argument}) that
\begin{equation}\label{cont_argument_2}
\mathbb{P}\Big( \exists z\in D': \Big| \frac{1}{\sqrt{N}} \sum_{j=1}^N c^{(3)}_{ij} g_j(z) G_{ij}(z) -\frac{1}{\sqrt{N}} c^{(3)}_{ii} (g_i(z))^2 \Big| \geq 2N^{\epsilon} \Psi(z)\Big) \leq N^{-D+100}\,,
\end{equation}		
		which implies~\eqref{estimate}.

	Next, using (\ref{2}) and $\Psi(z) \geq C N^{-\frac{1}{2}}$, we have $\Big| \frac{1}{\sqrt{N}} \sum_{j=1}^N c^{(3)}_{ij} g_j(z) G_{ij}(z) \Big| \prec \Psi(z).$ Therefore, we obtain the upper bound
	$$B_3(i)=O_{\prec}\Big( N^{-1} \Psi(z) \Big)+O_{\prec} \Big(N^{-\frac{1}{2}} \Psi^2(z) \Big)=O_{\prec} \Big(N^{-\frac{1}{2}} \Psi^2(z) \Big).$$
	
	For the second term, by (\ref{dH}), (\ref{lemma21}), and the local law we have
	\begin{align*}
	B_2(i)=&-\frac{1}{N^{\frac{3}{2}}} \sum_{j=1}^N c^{(3)}_{ij}  \E \Big[\frac{\partial  \ea}{\partial H_{ij}} (G_{ji}G_{ji} +G_{ii}G_{jj}) \Big]\\
	=&-\frac{1}{N^{\frac{3}{2}}} \sum_{j=1}^N c^{(3)}_{ij} \E \Big[\frac{\partial  \ea}{\partial H_{ij}} g_i(z) g_j(z)\Big]   +O_{\prec}\Big( \frac{(1+|\lambda|) \Psi(z)}{N \sqrt{\eta_0}} \Big)\\
	=&\frac{2 \ii \lambda}{ \pi N^{\frac{3}{2}}}   \E \Big[ \ea  \sum_{j=1}^N c^{(3)}_{ij} \int_{\Omega_{0}}\pzzp \tf(z') \frac{\dd}{\dd z'}(G(z'))_{ji} \dd^2 z'  g_i(z) g_j(z)  \Big]+O_{\prec}\Big( \frac{(1+|\lambda|) \Psi(z)}{N \sqrt{\eta_0}} \Big).
	\end{align*}
	By the same argument as in (\ref{estimate}) and the Cauchy integral formula, we have
	$$\Big| \frac{\dd}{\dd z'} \frac{1}{\sqrt{N}} \Big( \sum_{j=1}^N c^{(3)}_{ij} g_j(z) (G(z'))_{ji} \Big) \Big|=O_{\prec}\Big( \frac{\Psi(z')}{|{\Im}z'|}\Big). $$
	Using the stability bound (\ref{2}) and Lemma \ref{lemma3}, we have
	$$|B_2(i)| \prec  \frac{1+|\lambda|}{N \sqrt{N \eta_0}}+\frac{(1+|\lambda|) \Psi(z)}{N \sqrt{\eta_0}}=O_{\prec}\Big(\frac{(1+|\lambda|) \Psi(z)}{N \sqrt{\eta_0}} \Big).$$
	
	Similarly, by plugging (\ref{lemma22}) in the expression of $B_1$, we have
	\begin{align*}
	B_1(i)=&\frac{ \ii \lambda}{\pi N^{\frac{3}{2}}} \sum_{j=1}^N c^{(3)}_{ij}  \E \Big[  \ea \Big( \int_{\Omega_{0}}\pzzp \tf(z') \frac{\dd}{\dd z'}  ( g_i(z') g_j(z'))  \dd^2 z'\Big) G_{ji} \Big]
	+O_{\prec} \Big(  \frac{(1+|\lambda|^2) \Psi(z)}{N \sqrt{ \eta_0}}\Big).
	\end{align*}
	Using the anisotropic local law, we have	
	$$B_1(i) =O_{\prec} \Big(  (1+|\lambda|^2)N^{-1} \Psi(z)\Big)+O_{\prec} \Big(  \frac{(1+|\lambda|^2) \Psi(z)}{N \sqrt{\eta_0}}\Big)=O_{\prec} \Big(  \frac{(1+|\lambda|^2) \Psi(z)}{N \sqrt{\eta_0}}\Big).$$

	\subsection{Estimate on $I_3$}
	It is not hard to show that the diagonal terms for $i=j$ are negligible. Thus we can replace the fourth cumulants by $W_4-3$.  There are four terms in $I_3$ and we denote them as $D_1(i)$, $D_2(i)$, $D_3(i)$ and $D_4(i)$ respectively.
	
	First, we look at $D_1$. By the local law and (\ref{dke}), we have $|D_1(i)| \prec (1+|\lambda|^3) N^{-1} \Psi(z).$ Similarly, using~(\ref{dH}), (\ref{lemma21}) and the local law, we have $|D_3(i)| \prec \frac{(1+|\lambda|) \Psi(z) }{ N \sqrt{N \eta_0}}.$
	For the last term $D_4$, using (\ref{dH}) and the local law, we obtain that
	\begin{align*}
	D_4(i)=\frac{1}{6 N^2} \sum_{j=1}^{N} \E \Big[ \ea (1-\E) \Big( 36 G_{ii}G_{jj}(G_{ij})^2+6 (G_{ii})^2 (G_{jj})^2+6 (G_{ij})^4 \Big) \Big] \prec N^{-1} \Psi(z).
	\end{align*}
	Finally, we look at the leading term $D_2(i)$. Using the local law and (\ref{dke}), we have
	\begin{align}\label{d2}
	D_2(i)=&-\frac{W_4-3}{2 N^{2}}\sum_{j=1}^N \E \Big[\frac{\partial^2  \ea}{\partial^2 H_{ij}} \Big( (G_{ji})^2+G_{ii} G_{jj} \Big) \Big]\nonumber\\
	=&-\frac{W_4-3}{2 N^{2}}\sum_{j=1}^N \E \Big[\frac{\partial^2  \ea}{\partial^2 H_{ij}} g_i(z) g_j(z)  \Big]+O_{\prec} \Big( (1+|\lambda|^2)N^{-1} \Psi(z) \Big).
	\end{align}

	\subsection{Adding up the contributions to (\ref{sum})}\label{subsection:sum}
	Summing up the contributions from the previous subsections, we write (\ref{sum}) as
	\begin{multline*}
	(z-a_i+m_{fc}) \E [\ea(G_{ii}-\E G_{ii})]=-\frac{1}{N} g_i(z)\E [\ea (\Tr G- \E \Tr G)]+A_{41}(i)+A_{42}(i)+D_2(i)+\epsilon(i),
	\end{multline*}
	where $D_2$ is given in (\ref{d2}) and $A_{41}$, $A_{42}$ in (\ref{a4}), and $\epsilon(i)$ is the error term obtained in the previous subsections. Thanks to the stability bound (\ref{2}), we can divide  both sides by $z-a_i+m_{fc}$ to get
	\begin{align*}
	\E [\ea(G_{ii}-\E G_{ii})]=&\frac{1}{N} (g_i(z))^2 \E [\ea (\Tr G- \E \Tr G)]-g_i(z)\Big( A_{41}(i)+A_{42}(i)+D_2(i) +\epsilon(i) \Big).
	\end{align*}
	Summing over $i$ and rearranging, we find 
	\begin{equation}\label{newsum}
	(1-I_s(z)) \E [\ea (\Tr G- \E \Tr G)] =-\sum_{i=1}^N g_i(z) \Big( A_{41}(i)+A_{42}(i)+D_2(i) \Big)+\mathcal{E}_1,
	\end{equation}
	where $\mathcal{E}_1$ is the linear statistics of $\epsilon(i)$. By the argument in Section 5.1-5.3, we get 
	$$\mathcal{E}_1=O_{\prec}\Big( (1+|\lambda|^4) N \Psi(z) \Theta(z) \Big)+O_{\prec}\Big( (1+|\lambda|^4) \sqrt{N} \Psi^2(z) \Big)+O_{\prec}\Big( \frac{(1+|\lambda|^4) \Psi(z)}{\sqrt{\eta_0}} \Big).$$ 
	
	Next, we study the leading terms of the right side of (\ref{newsum}). Plugging (\ref{lemma21}) in (\ref{a4}), we have
	$$\sum_{i=1}^N \frac{A_{41}(i)}{z-a_i+m_{fc}}=\frac{ 2 \ii \lambda}{\pi N} \sum_{i=1}^N g_i(z) \E \Big[ \ea  \int_{\Omega_{0}}\pzzp \tf(z') \pzp (G(z')G(z))_{ii}  \dd^2 z' \Big].$$
	By the resolvent identity,
	\begin{equation}\label{resolvent_identity}
	G(z) G(z')=\frac{G(z)-G(z')}{z-z'}, \quad z \neq z',
	\end{equation}
	 we can write
        $$F(z,z'):=\frac{1}{N}\sum_{i=1}^N g_i(z) (G(z')G(z))_{ii}=\frac{1}{N}\sum_{i=1}^N \frac{1}{z' -{z}}g_i(z) (G_{ii}(z')-G_{ii}(z)).$$
 We separate into two cases:\\
	\textbf{Case 1:} If $z$ and $z'$ belong to different half-planes, then we have $\frac{1}{|z-z'|} \leq \frac{1}{| {\Im}z |}.$
	Thus by the anisotropic local law, we have
	\begin{align*}
	\Big| F(z,z')- \frac{1}{z' -{z}} \frac{1}{N}\sum_{i=1}^N g_i(z) (g_i(z')-g_i(z)) \Big|& =\frac{1}{|z'-z|} \Big|\frac{1}{N}\sum_{i=1}^N g_i(z) (G_{ii}(z')-g_i(z')) \Big|  \\
	+\frac{1}{|z'-z|} & \Big|\frac{1}{N}\sum_{i=1}^N g_i(z) (G_{ii}(z)-g_i(z)) \Big| =O_{\prec}\Big( \frac{\Theta(z)+\Theta(z')}{|{{\Im}z}|} \Big).
	\end{align*}
	
	\noindent \textbf{Case 2:} If $z$ and $z'$ are in the same half-plane, without loss of generality, we can assume they both belong to the upper half plane. If $|{\Im}z-{\Im}z'| \geq \frac{1}{2} {\Im}z$, then we can use the same argument as in Case~1. Thus it is sufficient to study when $|{\Im}z-{\Im}z'| \leq \frac{1}{2}{\Im}z$, which means $ \frac{2}{3} {\Im}z' \leq {\Im}z \leq 2 {\Im}z'$. Note that
	\begin{align*}
	\Big| F(z,z')-\frac{1}{z' -{z}} \frac{1}{N}\sum_{i=1}^N  g_i(z) (g_i(z')-g_i(z)) \Big|  \leq \Big|\frac{\frac{1}{N}\sum_{i=1}^N (g_i(z)-g_i(z')) (G_{ii}(z')-g_i(z'))}{z-z'}  \Big| \\
	+\Big| \frac{\frac{1}{N}\sum_{i=1}^N g_i(z') (G_{ii}(z')-g_i(z'))-\frac{1}{N}\sum_{i=1}^N g_i(z) (G_{ii}(z)-g_i(z)) }{z-z'} \Big|.
	\end{align*}
	Next, we look at the first term on the right side. By direct computation, we get
	$$\Big| \frac{g_i(z)-g_i(z')}{z-z'} \Big| \leq |g_i(z) || g_{i}(z') | \Big( 1+\Big| \frac{m_{fc}(z)-m_{fc}(z')}{z-z'} \Big| \Big).$$
	When $z,z'$ are in the same half plane, $m_{fc}$ is analytic in the neighborhood of the segment connecting $z$ and $z'$, denoted as $L(z,z')$. Thus
	$$\Big| \frac{m_{fc}(z)- m_{fc}(z')}{z-z'} \Big| \leq \sup_{\omega \in L(z,z')} \Big|m'_{fc}(\omega)\Big| \leq \frac{C}{{\Im}z}.$$
	Combining with (\ref{2}), we have
	$$\Big| \frac{g_i(z)-g_i(z')}{z-z'} \Big| \leq \frac{C'}{{\Im}z}.$$
	Using the second statement of the anisotropic local law by letting $B=\mbox{Diag}\Big(\frac{g_i(z)-g_i(z')}{z-z'}\Big)$ and the continuity argument as in (\ref{estimate}), we obtain that the first term is bounded as $O_{\prec}\Big( \frac{\Theta(z')}{{{\Im}z}} \Big)$. 
	
	For the second term, we write it as $\frac{h(z)-h(z')}{z-z'}$, where $$h(z):=\frac{1}{N}\sum_{i=1}^N g_i(z)(G_{ii}(z)-g_i(z)).$$ Since $h$ is analytic in the neighborhood of $L(z,z')$, we have
	$$\Big| \frac{h(z)- h(z')}{z-z'} \Big| \leq \sup_{\omega \in L(z,z')} \Big|\frac{\dd}{\dd \omega} h(\omega)\Big|.$$
	The anisotropic local law implies that $\sup_{w \in L(z,z')}|h(w)| \prec \Theta(z)$. Using the Cauchy integral formula, the second term is $O_{\prec}\Big( \frac{\Theta(z)}{{{\Im}z}} \Big)$. Then we obtain the same upper bound as in Case~1.
	
	Therefore, in both cases, we have
	$$F(z,z')=\frac{1}{z'-z} \Big(\frac{1}{N}\sum_{i=1}^N g_i(z) g_i(z')-\frac{1}{N}\sum_{i=1}^N g_i^2(z) \Big) +O_{\prec}\Big( \frac{\Theta(z)}{{{\Im}z}} \Big)+O_{\prec}\Big( \frac{\Theta(z')}{{{\Im}z}} \Big).$$
	
	\noindent Taking the derivative and using the Cauchy integral formula, we have
	\begin{align*}
	\pzp F(z,z')
	=\pzp\Big( \frac{1}{1-I(z,z')} \frac{\partial I(z,z')}{\partial z}\Big) (1-I_s(z))+O_{\prec}\Big( \frac{\Theta(z)}{{{\Im}z|{\Im}z'}|} \Big)+O_{\prec}\Big( \frac{\Theta(z')}{{{\Im}z|{\Im}z'}|} \Big).
	\end{align*}
	Then by using Lemma \ref{lemma3}, we have
	\begin{align*}
	\sum_{i=1}^N \frac{A_{41}(i)}{z-a_i+m_{fc}}=&\frac{ 2 \ii  \lambda}{\pi } \E [\ea] \int_{\Omega_{0}}\pzzp \tf(z') \pzp \Big( \frac{1}{1-I(z,z')} \frac{\partial I(z,z')}{\partial z} (1-I_s(z)) \Big) \dd^2 z' \\
	&+O_{\prec}\Big( \frac{\Theta(z)}{{\Im}z} \Big)+O_{\prec}\Big( \frac{1}{{N \eta_0} {\Im}z} \Big).
	\end{align*}
	
	Similarly, plugging (\ref{lemma21}) in (\ref{a4}), we have
	\begin{align*}
	\sum_{i=1}^N \frac{A_{42}(i)}{z-a_i+m_{fc}}=\frac{ (m_2-2) \ii \lambda }{\pi } \E [\ea] \int_{\Omega_{0}}\pzzp \tf(z') \pzp \Big( \frac{\partial I(z,z')}{\partial z} (1-I_s(z)) \Big) \dd^2 z' +O_{\prec}\Big( \frac{1}{\sqrt{N \eta_0}}\Big).
	\end{align*}
	
	Finally, plugging (\ref{lemma22}) in the leading term of (\ref{d2}) we have
	\begin{align*}
       \sum_{i=1}^N \frac{D_{2}(i)}{z-a_i+m_{fc}}\frac{(W_4-3)  \ii \lambda}{ \pi} \E[\ea] \int_{\Omega_{0}} \pzzp  \tf(z') \pzp \Big(  \frac{\partial I(z,z')}{\partial z} (1-I_s(z))  I(z,z') \Big)\dd^2 z'+O_{\prec}\Big( \frac{1+|\lambda|^2}{\sqrt{N \eta_0}}\Big).
	\end{align*}

	\noindent Therefore, we have
	$$(1-I_s(z)) \E [\ea (\Tr G- \E \Tr G)] =(1-I_s(z)) \frac{\i  \lambda}{\pi } \E [\ea] \cdot  \int_{\Omega_{0}}\pzzp \tf(z')  K(z,z') \dd^2 z' +\mathcal{E}_2,$$
	where $K(z,z')$ is given by (\ref{kernel}) and 
	\begin{equation}\label{erroruselater}
	\mathcal{E}_2=(1+|\lambda|^4) \Big[ O_{\prec}\Big(  N \Psi(z)\Theta(z) \Big)+O_{\prec}\Big( ( \sqrt{N} \Psi^2(z) \Big)+O_{\prec}\Big( \frac{\Psi(z)}{\sqrt{\eta_0}} \Big)+O_{\prec}\Big( \frac{ \Theta(z)}{\eta_0} \Big) +O_{\prec}\Big( \frac{ \Theta(z)}{\eta} \Big)\Big]. 
	\end{equation} 
Dividing both sides by $1-I_s(z)$, recalling from Lemma \ref{m} that $\Big|\frac{1}{1-I_s(z)} \Big| \sim \frac{1}{\sqrt{\kappa+\eta}}$, and using (\ref{1}), (\ref{12}), we have completed the proof of Lemma \ref{lemma4}.	\end{proof}

\section{Proof of Theorem \ref{meso} and Theorem \ref{mesoedge}}\label{sec:proof_of_main_thm}

In this section, we compute the variances of the mesoscopic CLT in the bulk and at the edges.

\subsection{In the bulk}
We compute the variance $V(f)$ defined in (\ref{vf}) with $f$ given in (\ref{fn}). 
\begin{lemma}\label{vfbulk}
Under the assumptions and notations of Theorem \ref{thm:weak_convergence},  we have
	$$\lim_{N \rightarrow \infty} V(f)=\frac{1}{2 \beta \pi^2} \int_{\R} \int_{\R} \frac{(g(x_1)-g(x_2))^2}{(x_1-x_2)^2} \dd x_1 \dd x_2.$$
\end{lemma}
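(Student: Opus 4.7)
The plan is to rescale $z_j = E_0+\eta_0 w_j$ in the integral~\eqref{vf} defining $V(f)$, simplify the kernel~\eqref{kernel} via the subordination formulas of Lemma~\ref{Izz}, and extract the leading order in $\eta_0$. Under this change of variables, $\pzaa\tf(z_j)=\eta_0^{-1}\pzaa\tg(w_j)$ and $\dd^2z_j=\eta_0^2\,\dd^2w_j$, so that
\[
V(f)=\frac{\eta_0^2}{\pi^2}\iint\pzaa\tg(w_1)\pzbb\tg(w_2)\,K(E_0+\eta_0 w_1,E_0+\eta_0 w_2)\,\dd^2 w_1\,\dd^2 w_2,
\]
and the task reduces to identifying the $\eta_0^{-2}$ part of $K$ in the rescaled variables.

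Next I would rewrite the dominant piece of $K$ by introducing $M(z_1,z_2)\deq(m_{fc}(z_1)-m_{fc}(z_2))/(z_1-z_2)$. Lemma~\ref{Izz} gives $I=M/(1+M)$ and $1+M=(\xi_1-\xi_2)/(z_1-z_2)$ with $\xi_j\deq z_j+m_{fc}(z_j)$. A short computation then yields $(1-I)^{-1}\pzb I=\pzb\log(1+M)$, hence
\[
\pza\Big(\frac{1}{1-I}\pzb I\Big)=\frac{(1+m_{fc}'(z_1))(1+m_{fc}'(z_2))}{(\xi_1-\xi_2)^2}-\frac{1}{(z_1-z_2)^2}.
\]
The two remaining terms in~\eqref{kernel} are of the form $\pzab\Phi$ with $\Phi$ a bounded analytic function of $(z_1,z_2)$ on each product of half-planes near $(E_0,E_0)$; these pieces contribute $O(\eta_0^2)$ after integration and vanish in the limit.

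I then split $\Omega_0\times\Omega_0$ according to whether $w_1$ and $w_2$ lie in the same or opposite half-planes. A Taylor expansion of $M$ about $z_2$ shows that, in the \emph{same-side} regime, the two singular terms in the last display cancel to leading order, leaving an $O(1)$ remainder that contributes vanishingly after multiplication by $\eta_0^2$. In the \emph{opposite-side} regime, $\xi_1-\xi_2\to m_{fc}(E_0+)-m_{fc}(E_0-)=2\pi\ii\rho_{fc}(E_0)$ stays bounded away from zero, since $E_0$ lies in the bulk, whereas $(z_1-z_2)^{-2}=\eta_0^{-2}(w_1-w_2)^{-2}$ diverges; the kernel then reduces to $-2/(\beta\eta_0^2(w_1-w_2)^2)$, yielding
\[
\lim_{N\to\infty}V(f)=-\frac{2}{\beta\pi^2}\iint_{\Im w_1\cdot\Im w_2<0}\pzaa\tg(w_1)\pzbb\tg(w_2)\frac{\dd^2 w_1\,\dd^2 w_2}{(w_1-w_2)^2}.
\]

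The last step is to reduce this complex integral to the advertised real double integral. Since $1/(w_1-w_2)^2$ is holomorphic in $w_2\in\C^-$ when $w_1\in\C^+$, Green's theorem converts the inner integral into a boundary integral over $\R$; applying Green's theorem once more in $w_1$ and symmetrizing with the $(w_1\in\C^-,w_2\in\C^+)$ contribution (with appropriate principal-value interpretation near the diagonal $w_1=w_2$) produces $\frac{1}{2\beta\pi^2}\iint (g(x_1)-g(x_2))^2/(x_1-x_2)^2\,\dd x_1\,\dd x_2$. The Fourier expression then follows from Parseval together with $\int_\R(1-\cos(a\xi))a^{-2}\,\dd a=\pi|\xi|$. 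The main obstacle will be making the removal of the $\pzab\Phi$-pieces and the same-side cancellation rigorous, uniformly on the rescaled domain $\Omega_0/\eta_0$, and handling the principal-value singularity carefully in the final Stokes reduction.
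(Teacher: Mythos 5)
Your plan follows essentially the same strategy as the paper's proof: both decompose the kernel, show the $(m_2-2/\beta)$- and $(W_4-1-2/\beta)$-pieces contribute $O(\eta_0^2)$, split the remaining $K_3$-integral by whether $z_1,z_2$ lie in the same or opposite half-planes, identify the opposite-side dominant term $-2/(\beta(z_1-z_2)^2)$, and reduce via Stokes' theorem. Two remarks on the execution. First, your clean identity
$\frac{\partial}{\partial z_1}\bigl((1-I)^{-1}\frac{\partial I}{\partial z_2}\bigr)
=(1+m_{fc}'(z_1))(1+m_{fc}'(z_2))/(\xi_1-\xi_2)^2-(z_1-z_2)^{-2}$
is exactly what the paper uses for the edge computation (Lemma \ref{bfedge}); in the bulk the paper instead expands $K_3$ to isolate $(m_1-m_2)^2/(z_1-z_2)^2(\xi_1-\xi_2)^2$, which amounts to the same thing, so no issue there. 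Second, and more importantly, the paper applies Stokes' formula \emph{before} any rescaling, converting $V(f)$ into contour integrals over two horizontal lines $\Gamma_1,\Gamma_2$ chosen at the \emph{distinct} heights $N^{-\tau}\eta_0$ and $\tfrac{1}{2}N^{-\tau}\eta_0$. This guarantees $|z_1-z_2|\gtrsim N^{-\tau}\eta_0$ on $\Gamma_1^+\times\Gamma_2^-$, so no diagonal singularity ever appears, and the subtraction of the $\tilde f(z_j)^2$ terms (via Cauchy's theorem on the disjoint contours) is clean. Your plan defers Stokes to the very end and explicitly flags the need for ``a principal-value interpretation near $w_1=w_2$''; that is exactly the step that the contour-offset trick eliminates, and you should adopt it to make the final reduction rigorous without a PV argument. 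With that change your proof and the paper's are the same up to the order in which rescaling and Stokes are performed.
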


Assuming that we have proved the above lemma, $V(f)$ converges to some positive constant since $g \in C^2_c(\R)$. Theorem \ref{meso} is a direct result of Proposition \ref{prop} after integrating $\phi'(\lambda)$ and using the Arzel\'a-Ascoli theorem and the L\'evy continuity theorem.

\begin{proof}[Proof of Lemma \ref{vfbulk}]
	Recall that
	$$V(f)=\frac{1}{\pi^2} \int_{\Omega_{0}} \int_{\Omega_{0}} \pzaa \tf(z_1) \pzbb \tf(z_2) (K_1+K_2+K_3)\dd^2z_1 \dd^2z_2,$$
	where 
	\begin{align}
	K_1&=\Big(m_2-\frac{2}{\beta}\Big) \pzab {I};\quad K_2=\Big(W_4-1-\frac{2}{\beta}\Big)  \Big( {I} \pzab {I} +\pza {I} \pzb {I} \Big);\label{k1}\\
	K_3&=\frac{2}{\beta}\frac{\partial}{\partial z_1}\Big(\frac{1}{1- I}\frac{\partial  I}{\partial z_2}\Big)=\frac{2}{\beta} \Big( \frac{1}{1-{I}(z_1,z_2)}\pzab {I}+ \frac{1}{(1-{I}(z_1,z_2))^2} \pza {I} \pzb {I} \Big).\label{k3}
	\end{align}
	Using Lemma \ref{m} and the stability bound (\ref{2}), we have
	\begin{align}
	\pza I(z_1,z_2)&=\frac{1}{N} \sum_{i=1}^N \frac{1+m'_{fc}(z_1)}{(a_i-z_1-m_{fc}(z_1))^2(a_i-z_2-m_{fc}(z_2)) }=O\Big( \frac{1}{\sqrt{\kappa_1+\eta_1}}\Big);\label{derivative1}\\
	\pzb I(z_1,z_2)&=\frac{1}{N} \sum_{i=1}^N \frac{1+m'_{fc}(z_2)}{(a_i-z_1-m_{fc}(z_1))(a_i-z_2-m_{fc}(z_2))^2 }=O \Big( \frac{1}{\sqrt{\kappa_2+\eta_2}}\Big);\label{derivative2}\\
	\pzab I(z_1,z_2)&=\frac{1}{N} \sum_{i=1}^N \frac{(1+m'_{fc}(z_1))(1+m'_{fc}(z_2))}{(a_i-z_1-m_{fc}(z_1))^2(a_i-z_2-m_{fc}(z_2))^2 }=O \Big( \frac{1}{\sqrt{(\kappa_1+\eta_1)(\kappa_2+\eta_2)}} \Big).\label{derivative3}
	\end{align}
	In addition, recalling (\ref{11}), for $z_1 \neq z_2$, we have
	\begin{equation}\label{derivative4}
	\frac{1}{1-I(z_1,z_2)}  = 1+\frac{m_{fc}(z_1)-m_{fc}(z_2)}{z_1-z_2}.
	\end{equation}
	If $z_1$ and $z_2$ are in the same half plane, $m_{fc}$ is analytic in a neighborhood of the segment connecting $z_1$ and $z_2$, denoted as $L(z_1,z_2)$.  By Lemma \ref{m}, then we have
	\begin{equation}\label{same}
	\Big| \frac{1}{1-I(z_1,z_2)} \Big| \leq 1+\Big| \frac{m_{fc}(z_1)-m_{fc}(z_2)}{z_1-z_2} \Big| \leq \sup_{z \in L(z_1,z_2)} |m'_{fc}(z)| \leq C \sup_{z \in L(z_1,z_2)}\Big(\frac{1}{\sqrt{\kappa+\eta}}\Big).
	\end{equation}
	If $z_1$, $z_2$ belong to different half planes, using Lemma \ref{m}, then we have
	\begin{equation}\label{notsame}
	\Big| \frac{1}{1-I(z_1,z_2)} \Big| \leq 1+\Big| \frac{m_{fc}(z_1)-m_{fc}(z_2)}{z_1-z_2} \Big|  \leq  \frac{C}{|z_1-z_2|} \leq \frac{C}{|\eta_1|+|\eta_2|}.
	\end{equation}
	Now, we are ready to compute $V(f)$. Since $\pzz K_i(z,z')= \pzzp K_i(z,z')=0$, $(i=1,2,3)$, and by Stokes' formula, we have
	\begin{align*}
	V(f)=-\frac{1}{4 \pi^2} \int_{\Gamma_1} \int_{\Gamma_2}  \tilde{f}(z_1) \tilde{f}(z_2) (K_1+K_2+K_3) \dd z_1 \dd z_2:=V_1+V_2+V_3,
	\end{align*}
	where $\Gamma_1=\{x_1+\i y_1: |y_1|= N^{-\tau} \eta_0 \}$ and $\Gamma_2=\{x_2+\i y_2: |y_2|= \frac{1}{2} N^{-\tau} \eta_0 \}$. We choose the orientation of both contours to be counterclockwise. The parts on the upper half plane are denoted as $\Gamma_1^+, \Gamma_2^+$, while the parts on the lower half plane are $\Gamma_1^-, \Gamma_2^-$.
	
	Using (\ref{derivative1})-(\ref{derivative3}), since $\kappa \geq \kappa_0 \geq c_0$ for some positive constant $c_0>0$, we have $|K_1+K_2| = O(1).$
	Combining with (\ref{assumpf}), by direct computation, we have $|V_1+V_2| =O(\eta_0^2).$ It then suffices to estimate $V_3$. We consider two cases.
	
	\textbf{Case 1}: If $z_1, z_2$ are in the same half plane, by (\ref{same}) and (\ref{derivative1})-(\ref{derivative3}), we have  $|K_3| = O(1).$ Therefore, 
	$$\Big( \int_{\Gamma_1^+} \int_{\Gamma_2^+} +\int_{\Gamma_1^-} \int_{\Gamma_2^-} \Big) \tf(z_1) \tf(z_2) K_3(z_1,z_2) \dd z_1 \dd z_2=O(\eta^2_0).$$
	
	\textbf{Case 2}: Consider $z_1$, $z_2$ are in different half planes. For notational simplicity, we define $m_1={m}_{fc}(z_1)$ and $m_2={m}_{fc}(z_2)$.
	Differentiating $I$ given in (\ref{11}), we have 
	\begin{equation}\label{temp}
	\pza {I}=  \frac{(z_1-z_2) m'_1-m_1+m_2}{(z_1+m_1-z_2-m_2)^2}; \qquad \pzb {I}=\frac{(z_2-z_1) m'_2+m_1-m_2}{(z_1+m_1-z_2-m_2)^2}.
	\end{equation}
	Using (\ref{notsame}) (\ref{derivative4}), (\ref{derivative1})-(\ref{derivative3}) and Lemma \ref{m}, we have
	$$K_3 =\frac{2}{\beta}  \frac{1}{(z_1-z_2)^2}  \frac{((z_1-z_2) m'_1-m_1+m_2)((z_2-z_1) m'_2+m_1-m_2)}{(z_1+m_1-z_2-m_2)^2}+O( \eta^{-1}_0N^\tau).$$
	Note that if $z \in \C^+$ and in the bulk,  then there exists $k, K >0$ such that $k \leq {\Im} m_{fc}(z) \leq K$. 
	If $z_1, z_2$ are in different half planes, there exists some constant $c>0$ such that $|z_1+m_1-z_2-m_2|>c$. Combining with Lemma \ref{m}, we have
	$$K_3=-\frac{2}{\beta} \frac{(m_1-m_2)^2}{(z_1-z_2)^2(z_1+m_1-z_2-m_2)^2} +O_{\prec}( \eta^{-1}_0N^{\tau})+O(1)=-\frac{2}{\beta} \frac{1}{(z_1-z_2)^2} +O( \eta^{-1}_0N^{\tau}).$$
	
Therefore, recalling the definition of $\tilde{f}$ in (\ref{tilde_f}), by symmetry and (\ref{assumpf}), we have
	\begin{equation}\label{mid1}
	V_3=\frac{1}{ \beta \pi^2} \int_{\Gamma_1^+}  \int_{\Gamma_2^-} \frac{\tilde{f}(z_1) \tilde{f}(z_2) }{(z_1-z_2)^2} \dd z_1 \dd z_2+O(\eta_0N^{\tau}),
	\end{equation}
	with opposite integral directions on the contours. Since $\Gamma_1^+$ and $\Gamma_2^-$ are disjoint and $\tf$ has compact support, we obtain from Cauchy's integral theorem that
	$$\frac{1}{ \beta \pi^2} \int_{\Gamma_1^+}  \int_{\Gamma_2^-} \frac{\tilde{f}(z_2)^2}{(z_1-z_2)^2} \dd z_1 \dd z_2= \frac{1}{ \beta \pi^2}  \int_{\Gamma_2^-}\tilde{f}(z_2)^2  \Big( \int_{\Gamma_1^+} \frac{1}{(z_1-z_2)^2} \dd z_1\Big) \dd z_2=0.$$
	The integral of $\frac{\tilde{f}(z_1)^2}{(z_1-z_2)^2}$ vanishes similarly. Thus, we have from (\ref{mid1}) and (\ref{tilde_f}) that
	\begin{align}\label{mid2}
	V_3=&-\frac{1}{2 \beta \pi^2} \int_{\Gamma_1^+}  \int_{\Gamma_2^-} \frac{(\tilde{f}(z_1)- \tilde{f}(z_2)^2 }{(z_1-z_2)^2} \dd z_1 \dd z_2+O(\eta_0N^{\tau})\nonumber\\
	=& \frac{1}{2 \beta \pi^2} \int_{\R}  \int_{\R} \frac{(f(x_1) -f(x_2)+\i N^{-\tau} \eta_0 (f'(x_1)- f'(x_2)))^2}{(x_1-x_2+\frac{3 \ii }{2} N^{-\tau} \eta_0)^2} \dd x_1 \dd x_2+O(\eta_0N^{\tau}).
	\end{align}
	Changing the variable 
	\begin{equation}\label{change}
	\tilde x_1=\frac{x_1-E_0}{\eta_0}; \qquad \tilde x_2=\frac{x_2-E_0}{\eta_0},
	\end{equation}
	we hence obtain from (\ref{mid2}) that
	\begin{align*}
	V_3&=\frac{1}{2 \beta \pi^2} \int_{\R}  \int_{\R}  \frac{(g(\tilde x_1)- g(\tilde x_2) +\i N^{-\tau} (g'(\tilde x_1)-g'(\tilde x_2)))^2}{(\tilde x_1-\tilde x_2+\frac{ 3 \i}{2} N^{-\tau})^2 } \dd \tilde x_1  \dd \tilde x_2 +O(\eta_0N^{\tau}).
	\end{align*}
Note that the integrand can be bounded uniformly by
\begin{equation}
\Big|  \frac{(g(\tilde x_1)- g(\tilde x_2) +\i N^{-\tau} (g'(\tilde x_1)-g'(\tilde x_2)))^2}{(\tilde x_1-\tilde x_2+\frac{3 \i}{2} N^{-\tau})^2 } \Big| \leq \frac{(g(\tilde x_1)- g(\tilde x_2) )^2}{(\tilde x_1-\tilde x_2)^2}+\frac{(g'(\tilde x_1)- g'(\tilde x_2) )^2}{(\tilde x_1-\tilde x_2)^2}.
\end{equation}
	Since $g \in C_c^2(\R)$, we then conclude the proof using dominated convergence.
\end{proof}

\subsection{Near the edge}
Theorem \ref{mesoedge} is a result of the following lemma and Proposition \ref{prop}.
\begin{lemma}\label{bfedge}
Under the assumptions and notations of Theorem \ref{thm:weak_convergence},  we have
	$$\lim_{N \rightarrow \infty}V(f)=\frac{1}{ 2 \beta \pi^2} \int_{\R}\int_{\R} \Big(\frac{g(-x^2)-g(-y^2)}{x-y} \Big)^2 \dd x \dd y.$$
\end{lemma}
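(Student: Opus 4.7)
The plan is to adapt the scheme of Lemma \ref{vfbulk} to the edge, using the square-root scaling natural at the endpoints of $\rho_{fc}$. As in the bulk case, I apply Stokes' theorem to write $V(f) = V_1 + V_2 + V_3$ as double contour integrals over rectangles $\Gamma_1, \Gamma_2$ at heights $|y| \sim N^{-\tau}\eta_0$. Near the regular edge $L_+$, Assumption \ref{assumption2.3} together with the edge equation (\ref{L}) yields the expansion
\[
m_{fc}(z) - m_+ = c_0\sqrt{L_+ - z} + O(|L_+-z|), \qquad m_{fc}'(z) = -\frac{c_0}{2\sqrt{L_+ - z}} + O(1),
\]
for some explicit $c_0 > 0$. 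The estimates (\ref{derivative1})--(\ref{derivative3}) then give $|K_1|, |K_2| \lesssim \eta_0^{-1}$ on the contours, and each $\Gamma_i$ has length $O(\eta_0)$ coming from the support of $\tilde f$, so that $|V_1|, |V_2| = O(\eta_0) = o(1)$. The main work lies in $V_3$.

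The key step is to introduce the substitution $z_i = L_+ - \eta_0 s_i^2$, under which $m_{fc}(z_i) - m_+ \approx c_0\sqrt{\eta_0}\,s_i$, $u_1 - u_2 \approx c_0\sqrt{\eta_0}(s_1 - s_2)$ with $u(z):=z + m_{fc}(z)$, and $\tilde f(z_i) = g(-s_i^2) + O(N^{-\tau})$ on the contours. Using the identity $1 - I(z_1,z_2) = (z_1 - z_2)/(u_1 - u_2)$ from Lemma \ref{Izz}, a direct calculation gives
\[
\frac{1}{1 - I(z_1, z_2)}\,\frac{\partial I}{\partial z_2}(z_1, z_2) \approx \frac{1}{2\eta_0\, s_2(s_1 + s_2)}, \qquad K_3(z_1, z_2) \approx \frac{1}{2\beta\,\eta_0^2\, s_1 s_2 (s_1 + s_2)^2}.
\]
Inserting the Jacobian $dz_i = -2\eta_0\, s_i\,ds_i$ cancels the $s_1 s_2$ factor. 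After combining both branches of $\sqrt{(L_+ - z)/\eta_0}$ so that the lifted contours $\widetilde\Gamma_i$ in the $s$-plane are invariant under the reflection $s \mapsto -s$, one arrives at
\[
V_3 \approx -\frac{1}{2\beta\pi^2}\int_{\widetilde\Gamma_1}\!\int_{\widetilde\Gamma_2}\frac{g(-s_1^2)\,g(-s_2^2)}{(s_1 + s_2)^2}\,ds_1\,ds_2.
\]

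To reach the claimed formula I apply the Cauchy symmetrization used in the proof of Lemma \ref{vfbulk}: the ``pure'' terms $g(-s_j^2)^2$ integrate to zero by closing the complementary $s$-contour on its side of analyticity, and the cross term combines to $(g(-s_1^2) - g(-s_2^2))^2$ with the appropriate prefactor. The reflection $s_2 \mapsto -s_2$, which preserves $\widetilde\Gamma_2$ by construction and transforms $(s_1 + s_2)^2$ into $(s_1 - s_2)^2$, then yields the symmetric form, and taking $N \to \infty$ via dominated convergence (using $g \in C^2_c(\R)$) produces the claimed $\tfrac{1}{2\beta\pi^2}\int_\R\int_\R (g(-x^2) - g(-y^2))^2 / (x - y)^2\,dx\,dy$. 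The main obstacle will be careful accounting of orientations and branches in the square-root substitution, in particular verifying that the Cauchy symmetrization argument remains valid despite the branch cut of $\sqrt{L_+ - z}$ crossing the original $z$-contour; this is handled by summing the contributions from the two branches of the square root explicitly.
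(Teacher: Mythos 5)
Your proposal follows the same route as the paper's proof: Stokes' formula, bounding $V_1+V_2$ by $O(\eta_0 N^\tau)$, the square-root expansion of $m_{fc}$ at $L_+$, a change of variables that linearizes $u=z+m_{fc}(z)$, and a Cauchy symmetrization plus reflection to reach the symmetric integrand. The only structural difference is cosmetic: you compose the two substitutions ($z\mapsto (z-L_+)/\eta_0$ followed by $w=\sqrt{z}$) into a single map $z=L_+-\eta_0 s^2$, and you work with $\frac{1}{1-I}\partial_{z_2}I$ already combined rather than splitting $K_3$ into the $(u_1-u_2)^{-2}$ and $(z_1-z_2)^{-2}$ pieces as the paper does. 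That is a legitimate shortcut (your computation of $\frac{1}{1-I}\partial_{z_2}I\approx\frac{1}{2\eta_0 s_2(s_1+s_2)}$ is correct and automatically captures the cancellation of the $\frac{1}{(z_1-z_2)^2}$ singularity near $s_1=s_2$), so I regard the methods as essentially the same.

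That said, two genuine gaps remain, and they are precisely the places where the paper spends most of its effort. First, the contour bookkeeping after the square-root substitution is non-trivial: the map $z\mapsto s$ is two-to-one and sends the four horizontal contour pieces $\Gamma_1^\pm\times\Gamma_2^\pm$ to curves in the $s$-plane whose horizontal portions only cancel after a careful orientation count (the paper devotes Figure~\ref{figure11} and the four separate terms $V_3^{\pm\pm}$ to exactly this). Your phrase ``combining both branches so that the lifted contours are invariant under $s\mapsto -s$'' asserts rather than demonstrates that the residual contour is $(-\ii\infty,\ii\infty)^2$; one must actually check that the horizontal arcs cancel and that the orientations match, which is where sign errors would otherwise enter. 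Second, the dominated-convergence step is not as routine as you suggest: the $N$-dependent integrand retains a $\frac{1}{s_1 s_2}$ factor that your Jacobian cancellation removes only pointwise, and showing that the resulting sequence is dominated by an $L^1$ function near the coordinate axes and near $s_1=s_2$ requires the explicit Case~1/Case~2 analysis of the dominating function $h(x_1,x_2)$ that the paper carries out. Finally, a concrete numerical issue: your own chain of steps ($V_3\approx-\frac{1}{2\beta\pi^2}\int\!\!\int\frac{g_1 g_2}{(s_1+s_2)^2}$, pure terms vanish, cross term gives $-\frac12(g_1-g_2)^2$, then reflect) produces $\frac{1}{4\beta\pi^2}\int\!\!\int\frac{(g(-x^2)-g(-y^2))^2}{(x-y)^2}\,\dd x\,\dd y$, matching Theorem~\ref{mesoedge} and the paper's own proof of this lemma, but \emph{not} the constant $\frac{1}{2\beta\pi^2}$ that you quote and that appears in the lemma statement; you should carry the prefactor through and flag the inconsistency rather than assert the stated constant without deriving it.
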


\begin{proof}[Proof of Lemma \ref{bfedge}]
	Similarly as in the bulk, using Stokes' formula, we have
	$$V(f)=-\frac{1}{4 \pi^2} \int_{\Gamma_1} \int_{\Gamma_2} \tf(z_1) \tf(z_2) (K_1+K_2+K_3) \dd z_1 \dd z_2:=V_1+V_2+V_3,$$
	where $K_1,K_2,K_3$ are given by (\ref{k1}) and (\ref{k3}), and we use the same notations and definitions as in the previous subsection. Using (\ref{derivative1})-(\ref{derivative3}) and Lemma \ref{m}, we have 
	$$|K_1+K_2| =O\Big( \frac{1}{\sqrt{|{\Im}z_1 {\Im}z_2}|}\Big)=O(N^{ \tau} \eta_0^{-1}).$$
	Using (\ref{assumpf}), one can show $|V_1+V_2|=O(\eta_0N^{\tau})$. Thus it is sufficient to study the integral involving $K_3$. Using (\ref{temp}), (\ref{derivative4}), and 
	$$\frac{\partial^2}{\partial z \partial z'} I(z,z')=\frac{\partial^2}{\partial z' \partial z} I(z,z')=\frac{(m_1'+m_2'+2m_1'm_2')(z_1-z_2)-(m_1'+m_2'+2)(m_1-m_2)}{(z_1+m_1-z_2-m_2)^3},$$
	by direct computation, we have
	$$K_3=\frac{2}{\beta} \Big( \frac{(1+m_1')(1+m_2')}{(z_1+m_1-z_2-m_2)^2}  -\frac{1}{(z_1-z_2)^2}\Big).$$
	
	For the second integrand, using the similar arguments as in the previous subsection, we have
	\begin{align}
	\lim_{N \rightarrow \infty} \int_{\Gamma_1^\pm} \int_{\Gamma_2^\pm} \frac{\tf(z_1) \tf(z_2)}{(z_1-z_2)^2} \dd z_1 \dd z_2=&-\frac{1}{2} \lim_{N \rightarrow \infty}  \int_{\Gamma_1^\pm} \int_{\Gamma_2^\pm} \frac{(\tf(z_1) -\tf(z_2))^2}{(z_1-z_2)^2} \dd z_1 \dd z_2\nonumber\\
	=&-\frac{1}{2} \int_{\R}\int_{\R} \frac{(g(x_1)-g(x_2))^2}{(x_1-x_2)^2} \dd x_1 \dd x_2.
	\end{align}
	Due to the opposite integral directions, we have
	$$\lim_{N \rightarrow \infty}  \int_{\Gamma_1^\pm} \int_{\Gamma_2^\mp} \frac{\tf(z_1) \tf(z_2)}{(z_1-z_2)^2} \dd z_1 \dd z_2=\frac{1}{2} \int_{\R}\int_{\R} \frac{(g(x_1)-g(x_2))^2}{(x_1-x_2)^2} \dd x_1 \dd x_2.$$
	The whole integral with respect to the second term of $K_3$ will hence vanish when $N \rightarrow \infty$. Thus it is sufficient to study the integral of the first term $\frac{(1+m_1')(1+m_2')}{(z_1+m_1-z_2-m_2)^2}$, that is,
	\begin{align*}
	V_3(f)&=-\frac{1}{2 \beta \pi^2} \Big( \int_{\Gamma_1^+} \int_{\Gamma_2^+}+\int_{\Gamma_1^-} \int_{\Gamma_2^-}+\int_{\Gamma_1^+} \int_{\Gamma_2^-}+\int_{\Gamma_1^-} \int_{\Gamma_2^+} \Big) \tf(z_1) \tf(z_2) \frac{(1+m_1')(1+m_2')}{(z_1+m_1-z_2-m_2)^2} \dd z_1 \dd z_2\\
	&:=V_3^{++}+V_3^{--}+V_3^{+-}+V_3^{-+}.
	\end{align*}

	Let $\zeta=z+m_{fc}(z)$ and $\zeta_{\pm}=L_{\pm}+m_{fc}(L_{\pm}) \in \R$. Define $F(\zeta):=\zeta-\frac{1}{N} \sum_{i=1}^N \frac{1}{a_i-\zeta}$ so that (\ref{self}) is equivalent to $z=F(\zeta)$. Assumptions 2.2 and 2.3 imply that there exists some constant $c_0$ independent of~$N$ such that
	$$\mbox{dist}( \{\zeta_{\pm},{\hat{\mathcal{I}}}\} ) \geq c_0,$$
	for all sufficiently large $N$, where ${\hat{\mathcal{I}}}$ is the smallest interval that contains the support of $\mu_{A}$; see (\ref{2}). Hence, $F(\zeta)$ is analytic in a neighborhood of $\zeta_{+}$, where we write
	$$F(\zeta)=F(\zeta_+)+F'(\zeta_+)(\zeta-\zeta_+) +\frac{F''(\zeta_+)}{2} (\zeta-\zeta_+)^2 +O(|\zeta-\zeta_+|^3).$$
	By (\ref{L}), $F'(\zeta_+)=1-\frac{1}{N}\sum_{i=1}^N \frac{1}{(a_i-\zeta_+)^2}=0$. Moreover, $F''(\zeta_+)=-\frac{2}{N}\sum_{i=1}^N \frac{1}{(a_i-\zeta_+)^3}$, and by (\ref{4}) it is bounded uniformly from below. In general, we have $|F^{(k)}(\zeta_+)|= \Big| \frac{k!}{N} \sum_{i=1}^N \frac{1}{(a_i-\zeta_+)^{(k+1)}}\Big| = O(1)$ because of (\ref{3}). Inverting $F(\zeta)=z$ in the neighborhood of $\zeta_+$, we have the expansion
	\begin{equation}\label{A+}
	\zeta=z+m_{fc}(z)=\zeta_++c_+ \sqrt{z-L_+} \Big(1+A_+(\sqrt{z-L_+}) \Big),
	\end{equation}
	where $A_+$ is an analytic function depending on $N$ with $A_+(0)=0$. This has been shown in the proof of Lemma 3.6 and  Lemma A.1 in \cite{bulk}.
	Note that $c_+= \Big(- \frac{1}{N} \sum_{i=1}^N \frac{1}{(a_i-\zeta_+)^3}\Big)^{-\frac{1}{2}}$ is some positive number depending on $N$ but is uniformly bounded. Furthermore, the coefficients of the expansion of $A_+$ are also uniformly bounded. Thus
	$$z+m_{fc}(z)=\zeta_++c_+ \sqrt{z-L_+} +O(|z-L_+|),$$
	where the square root is taken in a branch cut such that ${\Im} \sqrt{z-L_+}>0$ as ${\Im}z>0$. Similarly, we have
	$$1+m'_{fc}(z)=\frac{c_+ }{2\sqrt{z-L_+}}+d_++O(\sqrt{|z-L_+|}),$$
	where $d_+$ is some number which depends on $N$ but is uniformly bounded.
	Let $z=L_++\eta_0 x+\i N^{-\tau} \eta_0$. Then
	$$z+m_{fc}(z)=\zeta_++ c_+\sqrt{\eta_0 (x + \i N^{-\tau} ) } +O({\eta_0});~~1+m'_{fc}(z)=\frac{c_+}{2\sqrt{\eta_0 (x+ \i N^{-\tau})}}+d_++O(\sqrt{\eta_0 }).$$
Therefore, after changing the variable as in (\ref{change}), we have 
	\begin{align*}
	V_3^{++}&=-\frac{1}{8 \beta \pi^2} \int_{\R}\int_{\R}  \frac{ \tg(x_1) \tg(x_2) \Big(\frac{1}{\sqrt{x_1+\i N^{-\tau}}}+O(\sqrt{\eta_0}) \Big)\Big(\frac{1}{\sqrt{x_2+\frac{\i}{2} N^{-\tau}}}+O(\sqrt{\eta_0}) \Big)}{\Big(\sqrt{ x_1+\i N^{-\tau}}-\sqrt{ x_2+\frac{\i}{2} N^{-\tau}}+O( \sqrt{\eta_0})\Big)^2} \dd x_1 \dd x_2\\
	&=-\frac{1}{8 \beta \pi^2} \int_{\R}\int_{\R}  \frac{ \tg(x_1) \tg(x_2)}{\sqrt{x_1+\i N^{-\tau}}\sqrt{x_2+\frac{\i}{2} N^{-\tau}} \Big(\sqrt{ x_1+\i N^{-\tau}}-\sqrt{ x_2+\frac{\i}{2} N^{-\tau}}\Big)^2} \dd x_1 \dd x_2+O(\sqrt{\eta_0}N^{3\tau}),
	\end{align*}
	where $\tg(x)=g(x)+\i N^{-\tau} g'(x)$. The last step follows from the fact that $\Big| \sqrt{x_1+\i N^{-\tau}}-\sqrt{x_2+\frac{\i}{2} N^{-\tau}} \Big| \geq C N^{- \tau}$, when $x_1,x_2$ belong to some compact set.
	Let $\gamma_1^{\pm}:=\{ x_1 \pm \i N^{-\tau}: x_1 \in \R \}$ and $\gamma_2^{\pm}:=\{ x_2 \pm \frac{ \i}{2} N^{-\tau}: x_2 \in \R\}.$
	Then we obtain
	$$V_3^{++}=-\frac{1}{8 \beta \pi^2} \int_{\gamma_1^+} \int_{\gamma_2^+}  \frac{ \tg(z_1) \tg(z_2)}{\sqrt{z_1}\sqrt{z_2} (\sqrt{ z_1}-\sqrt{ z_2})^2} \dd z_1 \dd z_2+O(\sqrt{\eta_0 } N^{3\tau} ),$$
	where $\tg(x + \i y)=g(x)+\i y g'(x)\chi(y)$. Since $\gamma_1^{+}$ and $\gamma_2^{+}$ are disjoint and $\tg$ has compact support, changing the variable $w=\sqrt{z}$ and using Cauchy's integral theorem, we have
	$$\int_{\gamma_1^+} \int_{\gamma_2^+}  \frac{ \tg(z_1)^2 }{\sqrt{z_1}\sqrt{z_2} (\sqrt{ z_1}-\sqrt{ z_2})^2} \dd z_1 \dd z_2=0=\int_{\gamma_1^+} \int_{\gamma_2^+}  \frac{ \tg(z_2)^2 }{\sqrt{z_1}\sqrt{z_2} (\sqrt{ z_1}-\sqrt{ z_2})^2} \dd z_1 \dd z_2,$$
	and thus
	$$V_3^{++}=\frac{1}{16 \beta \pi^2} \int_{\gamma_1^+} \int_{\gamma_2^+}  \frac{ (\tg(z_1) -\tg(z_2))^2}{\sqrt{z_1}\sqrt{z_2} (\sqrt{ z_1}-\sqrt{ z_2})^2} \dd z_1 \dd z_2+O(\sqrt{\eta_0 } N^{3 \tau}).$$
	Therefore, we get
	$$\lim_{N \rightarrow \infty} V_3^{++} =\frac{1}{16 \beta \pi^2} \lim_{N \rightarrow \infty} \int_{\R}\int_{\R}  \frac{ (g(x_1)-g(x_2) +\i N^{-\tau} g'(x_1)- \frac{\i}{2} N^{-\tau} g'(x_2))^2 }{\sqrt{x_1+\i N^{-\tau}}\sqrt{x_2+\frac{\i}{2} N^{-\tau}} \Big(\sqrt{ x_1+\i N^{-\tau}}-\sqrt{ x_2+\frac{\i}{2} N^{-\tau}}\Big)^2} \dd x_1 \dd x_2.$$
	We denote the integrand as $h_N(x_1,x_2)$.
	Next, we interchange the limit and the integral . One shows that there exists $C>0$ such that
	$$\Big|\sqrt{ x_1+\i N^{-\tau}}-\sqrt{ x_2+\frac{\i}{2} N^{-\tau}} \Big| \geq C \Big| \sqrt{x_1}-\sqrt{x_2} \Big|.$$
	Set
	$$h(x_1,x_2):= C^{-2} \frac{(g(x_1)-g(x_2))^2+(g'(x_1)-g'(x_2))^2}{\sqrt{|x_1|} \sqrt{|x_2|} \Big|\sqrt{x_1}-\sqrt{x_2}\Big|^2},$$
	and observe that $|h_N(x_1,x_2)| \leq h(x_1,x_2)$. Next, we will show that $h(x_1,x_2)$ is integrable.
	
	Suppose $\mathrm{supp}(g)\subset[-M,M]$ for some $M>0$. 
	Then if $x_1$ and $x_2$ are both in $[-2M,2M]$ then we have the following estimation.\\
\textbf{Case 1:} If $x_1$, $x_2$ have the same sign, then
		\begin{align*}
		h(x_1,x_2)=&\frac{(g(x_1)-g(x_2))^2+(g'(x_1)-g'(x_2))^2}{\sqrt{x_1} \sqrt{x_2} ( \sqrt{x_1} -\sqrt{x_2})^2}\\
		=&\frac{1}{\sqrt{|x_1|} \sqrt{|x_2|} }\Big(\Big(\frac{g(x_1)-g(x_2)}{x_1-x_2}\Big)^2+\Big(\frac{g'(x_1)-g'(x_2)}{x_1-x_2}\Big)^2\Big)\big(\sqrt{|x_1|}+\sqrt{|x_2|}\big)^2\\
		\le&\frac{8M}{\sqrt{|x_1|} \sqrt{|x_2|} }(\|g'\|_\infty^2+\|g''\|_\infty^2).
		\end{align*}
		
\noindent \textbf{Case 2:} If $x_1$ and $x_2$ are of opposite signs, using $|x_1-x_2|=\big(\sqrt{|x_1|} -\i\sqrt{|x_2|}\big)\big(\sqrt{|x_1|} +\i\sqrt{|x_2|}\big)$, we have
		\begin{align*}
		h(x_1,x_2)=&\frac{(g(x_1)-g(x_2))^2+(g'(x_1)-g'(x_2))^2}{\sqrt{|x_1|} \sqrt{|x_2|} \big| \sqrt{|x_1|} -\i\sqrt{|x_2|}\big|^2}\\
		=&\frac{1}{\sqrt{|x_1|} \sqrt{|x_2|} }\Big(\Big(\frac{g(x_1)-g(x_2)}{x_1-x_2}\Big)^2+\Big(\frac{g'(x_1)-g'(x_2)}{x_1-x_2}\Big)^2\Big)\big|\sqrt{|x_1|}+\i\sqrt{|x_2|}\big|^2\\
		\le&\frac{8M}{\sqrt{|x_1|} \sqrt{|x_2|} }(\|g'\|_\infty^2+\|g''\|_\infty^2).
		\end{align*}
If $x_1\not\in[-2M,2M]$, then $x_2\in[-M,M]$ otherwise $h(x_1,x_2)=0$. So for $(x_1,x_2)\in[-2M,2M]^c\times[-M,M]$,
	\begin{align*}
	h(x_1,x_2) \le \frac{4\|g\|_\infty^2+4\|g'\|_\infty^2}{\sqrt{|x_1|}\sqrt{|x_2|}(\sqrt{|x_1|}-\sqrt{|x_2|})^2}\le \frac{4\|g\|_\infty^2+4\|g'\|_\infty^2}{\sqrt{|x_1|}\sqrt{|x_2|}(\sqrt{|x_1|}-\frac{1}{\sqrt2}\sqrt{|x_1|})^2}=\frac{C}{|x_1|^{3/2}|x_2|^{1/2}}.
	\end{align*}
	Therefore, $h(x_1,x_2)$ is integrable. Thus by dominated convergence, 
	\begin{align*}
	\lim_{N \rightarrow \infty} V_3^{++}&=\frac{1}{16 \beta \pi^2} \int_{\R}\int_{\R}  \frac{ (g(x_1)-g(x_2))^2 }{\sqrt{x_1+\i 0}\sqrt{x_2+\i 0} (\sqrt{ x_1+\i 0}-\sqrt{ x_2+\i 0})^2} \dd x_1 \dd x_2\\
	&=\frac{1}{4 \beta \pi^2} \int_{\psi(\R+\i 0)} \int_{\psi(\R+\i 0)} \frac{(g(w^2_1)-g(w^2_2))^2}{(w_1-w_2)^2} \dd w_1 \dd w_2,
	\end{align*}
	where we change the variable $\psi(z):=\sqrt{z}$; with branch cut such that $\psi: \C^+ \rightarrow \C^+$.
	
	Similarly, we have
	$$\lim_{N \rightarrow \infty} V_3^{--}=\frac{1}{4 \beta \pi^2} \int_{\psi(\R-\i 0)} \int_{\psi(\R-\i 0)}  \frac{(g(w^2_1)-g(w^2_2))^2}{(w_1-w_2)^2} \dd w_1 \dd w_2;$$
	$$\lim_{N \rightarrow \infty} V_3^{+-}=\frac{1}{4 \beta \pi^2} \int_{\psi(\R+\i 0)} \int_{\psi(\R-\i 0)}  \frac{(g(w^2_1)-g(w^2_2))^2}{(w_1-w_2)^2} \dd w_1 \dd w_2;$$
	$$\lim_{N \rightarrow \infty} V_3^{-+}=\frac{1}{4 \beta \pi^2} \int_{\psi(\R-\i 0)} \int_{\psi(\R+\i 0)}  \frac{(g(w^2_1)-g(w^2_2))^2}{(w_1-w_2)^2} \dd w_1 \dd w_2.$$
	
	The contours are shown in Figure \ref{figure11}. 
	\begin{figure}[h!]
		\centering
		\includegraphics[width=0.7\textwidth]{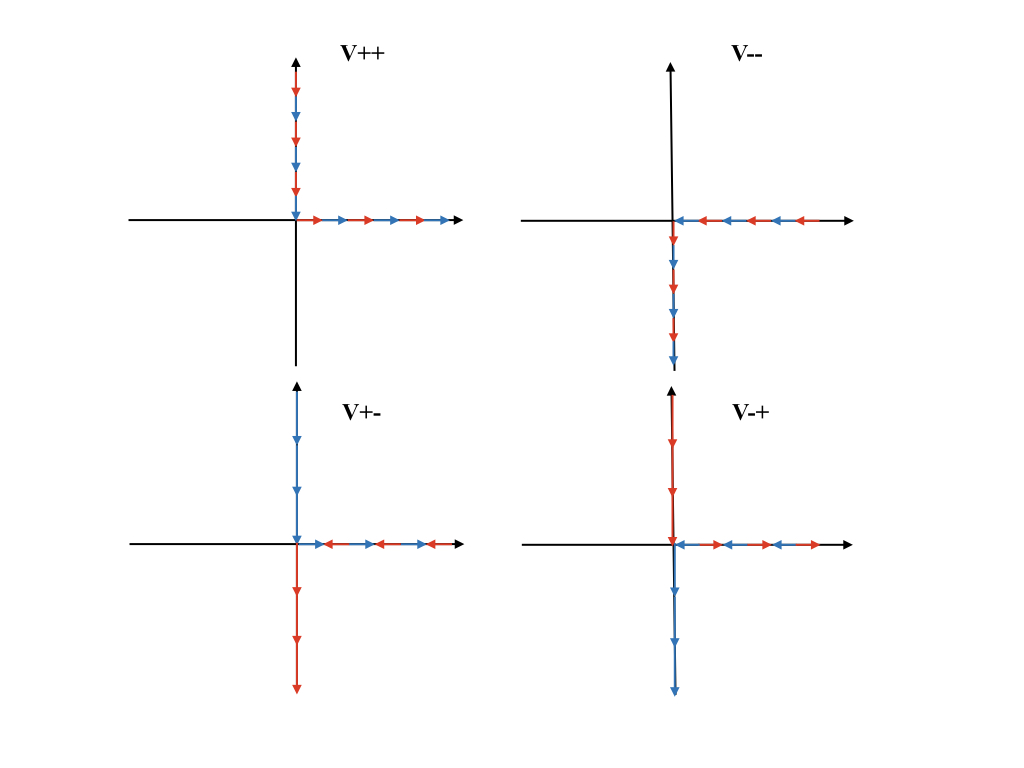}
		\caption{Integration contours in the variance term $V_3$}
		\label{figure11}
	\end{figure}
	Note that the horizontal parts of the blue and the red lines of above graph will cancel because of the opposite integral direction. To sum up, we have
	
	$$\lim_{N \rightarrow \infty} V_3=\frac{1}{4 \beta \pi^2} \int_{-\i \infty}^{\i \infty}\int_{-\i \infty}^{\i \infty} \frac{(g(w^2_1)-g(w^2_2))^2}{(w_1-w_2)^2}\dd w_1 \dd w_2=\frac{1}{4 \beta \pi^2} \int_{\R}\int_{\R} \Big(\frac{g(-x_1^2)-g(-x_2^2)}{x_1-x_2} \Big)^2 \dd x_1 \dd x_2.$$ 
	This concludes the proof of Theorem \ref{mesoedge}.
\end{proof}

\section{Proof of Proposition \ref{prop2} and computation of the bias}\label{sec:expectation}
In this section, we first prove Proposition \ref{prop2}, using the same technique as in Proposition~\ref{prop}. After this, we compute the bias on mesoscopic scales inside the bulk and at the edges.
\begin{proof}[Proof of Proposition \ref{prop2}]
We treat the expectation similarly using the cumulant expansion and (\ref{dH}):
$$(z-a_i) \E G_{ii} =\E (HG)_{ii}-1=\frac{1}{N} \E \sum_{j=1}^N c^{(2)}_{ij}  \frac{\partial G_{ji} }{\partial H_{ij}}-1 +\frac{1}{2! N^{\frac{3}{2}}}\sum_{j=1}^N c^{(3)}_{ij} \E \frac{\partial^2 G_{ji} }{\partial^2 H_{ij}} +\frac{1}{3! N^{2}}\sum_{j=1}^N c^{(4)}_{ij} \E\frac{\partial^3 G_{ji} }{\partial^3 H_{ij}} +O_{\prec}(N^{-\frac{3}{2}})$$
\begin{align*}
=&-\frac{1}{N} \sum_{j=1}^N \E G_{ii} G_{jj} -\frac{1}{N} \E (G^2)_{ii} -\frac{m_2-2}{N} \E (G_{ii})^2-1+\frac{1}{2 N^{\frac{3}{2}}}\sum_{j=1}^N c^{(3)}_{ij} \E \Big(6 G_{ii} G_{ij} G_{jj}+2 G^3_{ij}\Big)\\
&+\frac{1}{6 N^{2}}\sum_{j=1}^N (W_4-3) \E \Big(-36 G_{ii} G_{jj} G^2_{ij}-6 G^2_{ii}G^2_{jj}-6 G^4_{ij} \Big)+O_{\prec}(N^{-\frac{3}{2}}).
\end{align*}
Combining with the local law, we have
$$(z-a_i) \E G_{ii}=-\frac{1}{N}  \E G_{ii} \Tr G -\frac{1}{N} \frac{d}{dz}\frac{1}{a_i-z-m_{fc}} -\frac{m_2-2}{N} \frac{1}{(a_i-z-m_{fc})^2}-1$$
$$+\frac{3}{ N^{\frac{3}{2}}}\sum_{j=1}^N   \frac{c^{(3)}_{ij}}{(a_i-z-m_{fc})(a_j-z-m_{fc})} G_{ij}-\frac{1}{ N^{2}}\sum_{j=1}^N (W_4-3)  \frac{1}{(a_i-z-m_{fc})^2(a_j-z-m_{fc})^2}+O_{\prec}\Big( \frac{\Psi(z)}{N \eta}\Big).$$

Using the anisotropic local law and the argument as in (\ref{estimate}), one can show that the second term of the last line of above equation is $O_{\prec}( N^{-1}\Psi(z)).$
Therefore, we have
$$(z-a_i) \E G_{ii}=-\frac{1}{N}  \E  \Big( G_{ii}-\frac{1}{a_i-z-m_{fc}} \Big) \Tr G -\frac{1}{N}\frac{1}{a_i-z-m_{fc}} \E \Tr G-1-\frac{1}{N} \frac{1+m'_{fc}(z)}{(a_i-z-m_{fc})^2} $$
$$ -\frac{m_2-2}{N} \frac{1}{(a_i-z-m_{fc})^2}-\frac{1}{ N}  I_s(z) \frac{W_4-3}{(a_i-z-m_{fc})^2}+O_{\prec}\Big( \frac{\Psi(z)}{N \eta}\Big),$$
and thus
\begin{align*}
(z-a_i+m_{fc})\Big( \E G_{ii}-\frac{1}{a_i-z-m_{fc}} \Big)&=-\frac{1}{N}\frac{1}{a_i-z-m_{fc}}  \Big( \E \Tr G-N m_{fc} \Big)\\
-\frac{1}{N} \frac{1+m'_{fc}(z)}{(a_i-z-m_{fc})^2} &-\frac{m_2-2}{N} \frac{1}{(a_i-z-m_{fc})^2}-\frac{1}{ N}  I_s(z) \frac{W_4-3}{(a_i-z-m_{fc})^2}+O_{\prec}\Big( \frac{\Psi(z)}{N \eta}\Big).
\end{align*}
Dividing both sides by $a_i-z-m_{fc} \sim O(1)$ and summing over $i$, we obtain
\begin{align*}
(1-I_s(z))\E (\Tr G -N m_{fc})=&\frac{1}{N} \sum_{i=1}^N \frac{1+m'_{fc}(z)}{(a_i-z-m_{fc})^3} +\frac{m_2-2}{N} \sum_{i=1}^N \frac{1}{(a_i-z-m_{fc})^3}\\
&+\frac{W_4-3}{ N}  \sum_{i=1}^N \frac{I_s(z)}{(a_i-z-m_{fc})^3}+O_{\prec}\Big( \frac{\Psi(z)}{ \eta}\Big).
\end{align*}
Dividing both sides by $1-I_s(z)$  and using the relation $1-I_s(z)=\frac{1}{1+m'_{fc}(z)} \sim {\sqrt{\kappa+\eta}}$, we obtain
$$\E (\Tr G-N m_{fc})=\frac{1}{1-I_s(z)}\frac{1}{2} \frac{\dd I_s(z)}{\dd z} +\frac{m_2-2}{2} \frac{\dd I_s(z)}{\dd z} +\frac{W_4-3}{2} I_s(z)\frac{\dd I_s(z)}{\dd z}+O_{\prec}\Big( \frac{1}{\eta\sqrt{N \eta \sqrt{\kappa+\eta}}}\Big).$$
Plugging into (\ref{fw2}) (here we replace $\E \mu_N$ by $\mu_{fc}$), using Lemma \ref{lemma3} and Stokes' formula, we have
$$\E \Tr f(X_N)-N \int_{\R} f(x) \rho_{fc}(x) \dd x=\frac{1}{4 \pi \i} \int_{\partial \Omega_0} \tf(z) b(z) \dd z+O_{\prec}\Big( \frac{N^{2 \tau}}{\sqrt{N \eta_0 \sqrt{\kappa_0+\eta_0}}}\Big)+O_{\prec}(N^{-\tau}),$$
where $b(z)$ is given by (\ref{bz}).
Using the relation $I_s=\frac{m'_{fc}}{1+m'_{fc}}$, it coincides with the expectation that obtained in the global CLT given in Theorem \ref{global}. Thus we conclude the proof of Proposition \ref{prop2}.
\end{proof}

Next, we explicitly compute the bias in the bulk and at the edges, for the scaled test function in (\ref{fn}). 

\subsection{Bias in the mesoscopic bulk}
Note that
\begin{equation}\label{useful}
\frac{dI_s}{dz}=\frac{2}{N} \sum_{i=1}^N \frac{1+m'_{fc}}{(a_i-z-m_{fc})^3}= O\Big( \frac{1}{\sqrt{\kappa+\eta}}\Big); \qquad 1-I_s(z) \sim \frac{1}{\sqrt{\kappa+\eta}}; \qquad |I_s(z)| = O(1).
\end{equation}

If $\kappa \geq \kappa_0> c>0$, then $|b(z)|=O(1)$. In combination with (\ref{assumpf}), we have 
$$\E \Tr f(X_N)-N \int_{\R} f(x) \rho_{fc}(x) \dd x =   O_{\prec}\Big(\eta_0+\frac{N^{2\tau}}{\sqrt{N \eta_0 \sqrt{\kappa_0+\eta_0}}}+N^{-\tau} \Big),$$
hence we see that the bias vanishes as $N$ goes to infinity.

\subsection{Bias at the mesoscopic edge}
Similarly, using (\ref{useful}) and (\ref{assumpf}), the last two terms of $b(z)$ will contribute $O_{\prec}(\sqrt{N^{\tau}\eta_0})$. We have
\begin{align*}
\E \Tr f(X_N)-N \int_{\R} f(x) \rho_{fc}(x) \dd x=\frac{1}{4 \pi \i } \int_{\partial \Omega_0}  \tf(z) \frac{m_{fc}''}{1+m_{fc}'} \dd z+&O_{\prec}\Big(N^{-\tau}+ \frac{N^{2\tau}}{\sqrt{N \eta_0 \sqrt{\kappa_0+\eta_0}}}+\sqrt{N^{\tau}\eta_0}\Big).
\end{align*}
Using (\ref{A+}), we obtain the following expansions:
$$1+m'_{fc}(z)=\frac{c_+ }{2\sqrt{z-L_+}}+O(1),~~~m''_{fc}(z)=-\frac{c_+ }{4(\sqrt{z-L_+})^3}+ O \Big( \frac{1}{\sqrt{|z-L_+|}} \Big),$$
and then
$$\frac{m_{fc}''}{1+m_{fc}'} =-\frac{1}{2(z-L_+)}+O\Big( \frac{1}{\sqrt{|z-L_+|}} \Big).$$
Changing variables and using (\ref{assumpf}), we have
\begin{align*}
\E \Tr f(X_N)-&N \int_{\R} f(x) \rho_{fc}(x) \dd x=-\frac{1}{8\pi\i} \int_{\R} (g(x)+\i N^{-\tau}g'(x)) \frac{1}{x+\i N^{-\tau}} \dd x\\
&+\frac{1}{8\pi\i} \int_{\R} (g(x)-\i N^{-\tau}g'(x)) \frac{1}{x-\i N^{-\tau}} \dd x+O_{\prec}\Big(N^{-\tau}+ \frac{N^{2\tau}}{\sqrt{N \eta_0 \sqrt{\kappa_0+\eta_0}}}+\sqrt{N^{\tau}\eta_0}\Big)\\
=&-\frac{1}{8\pi\i} \int_{\R}  \frac{g(x)}{x+\i N^{-\tau}} \dd x+\frac{1}{8\pi\i} \int_{\R}  \frac{g(x)}{x-\i N^{-\tau}} \dd x+O_{\prec}\Big(N^{-\tau}+ \frac{N^{2\tau}}{\sqrt{N \eta_0 \sqrt{\kappa_0+\eta_0}}}+\sqrt{N^{\tau}\eta_0}\Big).
\end{align*}
Using the Sokhotski-Plemelj lemma, we have
\begin{align*}
\E \Tr f(X_N)-N \int_{\R} f(x) \rho_{fc}(x) \dd x=\frac{g(0)}{4}+O_{\prec}\Big(N^{-\tau}+ \frac{N^{2\tau}}{\sqrt{N \eta_0 \sqrt{\kappa_0+\eta_0}}}+\sqrt{N^{\tau}\eta_0}\Big),
\end{align*}
where we used the regularity $g \in C_c^2(\R)$. This finishes the computation of mesoscopic bias.

\section{Sample covariance matrix}\label{sec:sample_covariance}

In this section, we use the previous arguments to derive the mesoscopic eigenvalue statistics of sample covariance matrix and prove similar CLTs in the bulk and at the regular edges. We start by introducing the model in detail.

\subsection{Setup, assumptions and main results}
 Let $X_N=(X_{ij})$ be an $M \times N$ matrix satisfying the following assumption.
\begin{assumption}\label{assumption_sample_1}
\begin{enumerate}
\item $\{X_{ij}| 1 \leq i \leq M, 1 \leq j \leq N\}$ are independent real-valued centered random variables.
\item For all $i,j$, we have $\E |\sqrt{N}X_{ij}|^2=1$. In addition, $\sqrt{N}X_{ij}$ has uniformly bounded moments, that is, there exists $C_p>0$ independent of $N$ such that for all $i,j$,
\begin{equation}\label{moment_condition_sample}
\E |\sqrt{N} X_{ij}|^{p} \leq C_{p}.
\end{equation}
\item To simplify the statement,  we also assume that there exists a constant $K_4$ such that
\begin{equation}\label{k_4}
K_4:=\frac{1}{N} \sum_{j=1}^N {c^{(4)}_{ij}}, \mbox{ where $c^{(4)}_{ij}$ is the fourth cumulant of $\sqrt{N} X_{ij}$.}
\end{equation}
\end{enumerate}
\end{assumption}
Note that $M$ depends on $N$ and set 
\begin{equation}\label{ratio}
\gamma \equiv \gamma_N:=\frac{M}{N} \rightarrow \gamma_0, \qquad  0< \gamma_0 <\infty.
\end{equation}
We study the $M \times M$ sample covariance matrices 
\begin{equation}\label{H_sample}
H_M:=Y_NY^*_N,\qquad Y_N:=\Sigma^{1/2} X_N, \qquad \Sigma:=\mbox{Diag}(\sigma_i),
\end{equation}
where $\Sigma$ is an $M \times M$ positive definite, deterministic and diagonal matrix with 
\begin{equation}\label{condition_sample}
\infty > \sigma_1 \geq \sigma_2 \geq \cdots \geq \sigma_{M} > 0, \qquad \limsup_{N \rightarrow \infty} \sigma_{1} <\infty, \qquad  \liminf_{N \rightarrow \infty} \sigma_{M}>0.
\end{equation}
We denote by $\mu_{\Sigma}$ the empirical eigenvalue distribution of $\Sigma$, i.e. $\mu_{\Sigma}:=\frac{1}{M} \sum_{j=1}^M \delta_{\sigma_j}$. The following assumption ensures that the limit of $\mu_{\Sigma}$ exists.
\begin{assumption}\label{assumption_sample_2}
Together with (\ref{condition_sample}), $\mu_{\Sigma}$  converges weakly to a deterministic measure $\mu_\sigma$ as $N \rightarrow \infty$ such that $\mu_\sigma$ is compactly supported in $(0, \infty)$.
\end{assumption}
 The eigenvalues of $H \equiv H_M$ are denoted as $\lambda_i \in \R$, $1 \leq i \leq M$. The empirical spectral measure of $H_M$ is defined by $\mu_M=\frac{1}{M} \sum_{i=1}^M \delta_{\lambda_i}$. 
 The Stieltjes transform of $ \mu_M$ is then given by
\begin{equation}\label{m_sample}
m_M(z):= M^{-1} \Tr G(z),\qquad \mbox{where }G(z):=(H_M-zI)^{-1}, \quad z \in \C^+.
\end{equation}
We further introduce the $N \times N$ matrices
\begin{equation}\label{mathcalH}
\mathcal{H}_N:=Y_N^* Y_N, \qquad \mathcal{G}:=(\mathcal{H}-z)^{-1}, \qquad \mathfrak{m}_{N}(z):=N^{-1} \Tr \mathcal{G}, \quad z \in \C^+.
\end{equation}
The eigenvalues of $\mathcal{H} \equiv \mathcal{H}_N $ are denoted by $\{\mu_{i}\}_{i=1}^N$. It is straightforward that $\{\lambda_{i}\}_{i=1}^M$ differs from $\{\mu_{i}\}_{i=1}^N$ by $|N-M|$ zeros, hence we have the relation
\begin{equation}\label{gamma_N}
\mathfrak{m}_{N}(z)=\gamma m_{M}+\frac{\gamma-1}{z}.
\end{equation}

In the null case $\Sigma=I$, the Marchenko-Pastur law states that the empirical eigenvalue distribution of $H=XX^*$ converges weakly to the Marchenko-Pastur distribution with aspect ratio $\gamma_0$, whose density is given by $\dd\mu_{MP, \gamma_0}:=\frac{1}{2 \pi \gamma_0} \sqrt{\frac{[(x-\gamma_-)(\gamma_+-x)]_{+}}{x^2}} \dd x+(1-\gamma_0^{-1})_+ \delta_0$ with $\gamma_{\pm}=(1 \pm \sqrt{\gamma_0})^2$. Its Stieltjes transform $m_{MP, \gamma_0}$, or denoted by $m_{\gamma_0}$ for short, is characterized as the unique solution~of 
\begin{equation}\label{solution_sample}
1+ (z-1+\gamma_0) m(z) +\gamma_0 zm^2(z)=0, \quad \mbox{or equivalently} \quad m(z)=\frac{1}{1-\gamma_0 -\gamma_0 z m(z)-z},
\end{equation}
such that $\Im m(z)>0$, $z \in \C^+$. Because of (\ref{gamma_N}), the Stieltjes transform of the limiting spectral measure of $\mathcal{H}=X^*X$, denoted by $\mr_{\gamma_0^{-1}}$, is then given by
\begin{equation}\label{gamma}
\mathfrak{m}_{\gamma_0^{-1}}(z)=\gamma_0 m_{\gamma_0}(z)+\frac{\gamma_0-1}{z}.
\end{equation}

In the non-null case $\Sigma \neq I$, under Assumption \ref{assumption_sample_2}, the limiting spectral measure of $H=\Sigma^{1/2} X X^* \Sigma^{1/2}$ exists, henceforth referred to as the deformed Marchenko-Pastur law. Its Stieltjes transform, denoted by $m_{fc, \gamma_0}$, or $m_{fc}$ for short, is the unique solution of
\begin{equation}\label{mplawgamma}
m(z)=\int_{\R} \frac{1}{t(1-\gamma_0 -\gamma_0 z m(z))-z} \dd \mu_\sigma(t),
\end{equation}
such that $\Im m(z)>0$, $z \in \C^+$. The corresponding limiting measure, denoted by $\mu_{fc,\gamma_0}$ or $\mu_{fc}$ for short, is the free multiplicative convolution of $\mu_\sigma$ and the standard Marchenko-Pastur law with ratio $\gamma_0$, i.e., $\mu_{fc, \gamma_0}=\mu_\sigma \boxtimes \mu_{MP, \gamma_0}$; see \cite{freetimes, freetimes2}. It was proved in \cite{Silverstein+Choi} that the free multiplicative convolution measure is absolutely continuous and its density function is analytic whenever positive in $(0, \infty)$.

According to (\ref{gamma}), the Stieltjes transform of the limiting spectral measure of $\mathcal{H}=X^* \Sigma X$ is the unique solution~to
\begin{equation}\label{mplawgamma-11}
{\mr} (z)=\frac{1}{-z+\gamma_0 \int_{\R} \frac{t}{t{\mr(z)}+1} \dd \mu_\sigma(t)}, \quad \mbox{or equivalently} \quad \gamma_0-1 -z \mr(z)= \int_{\R} \frac{\gamma_0}{1+ t \mr(z)} \dd \mu_\sigma(t),
\end{equation}
such that $\Im \mr(z)>0$, $z \in \C^+$.

 For general $\Sigma$, $\mu_{fc}$ could be supported on several disjoint intervals; we refer to \cite{Hachem+Hardy+Najim_1, Silverstein+Choi, isotropic} for discussions on the support of the density. The following assumption ensures that the free multiplicative convolution measure is supported on a single interval and the edges behave like square roots. It also rules out the possibilities of outliers. 
\begin{assumption}\label{assumption_sample_3}
	Let $[\sigma_-, \sigma_+] \subset \R^+$ be the smallest interval that contains the support of $\mu_\sigma$ and set $\mathcal{I}:=[\sigma_+^{-1}, \sigma_-^{-1}]$. Assume that
	\begin{equation*}
         \inf_{x \in\mathcal{I}} \int_{\R} \Big( \frac{t x}{1-t x} \Big)^2 \dd \mu_\sigma(t)\geq \gamma_0^{-1}+w,
	\end{equation*}
	for some constant $w>0$ (the left side may be infinite). Similarly, set $\hat{\mathcal{I}}:=[ \sigma_1^{-1}, \sigma_M^{-1}]$. Assume that
	$$\inf_{x \in \hat{\mathcal{I}}} \int_{\R} \Big( \frac{t x}{1-t x} \Big)^2 \dd \mu_{\Sigma}(t)\geq \gamma^{-1}+w,$$
	for sufficiently large $N$.
\end{assumption}

Let $\xi=-\mr(z)$ so that (\ref{mplawgamma-11}) is equivalent to 
$$z=F(\xi), \qquad F(\xi):=\frac{1}{\xi} + \gamma_0 \int_{\R} \frac{t }{1-t \xi} \dd \mu_\sigma(t).$$ 
As an analogue of (\ref{L}), it was argued in \cite{Silverstein+Choi} that the edges of the support of $\mu_{fc}$, denoted as $E_{\pm}$, are given by
$E_\pm=F(\xi_\pm)$, where $\xi_{\pm} \in \R$ are the solutions to
\begin{equation}\label{solutionedge_sample}
H(\xi):=\int_{\R} \Big( \frac{t \xi}{1-t \xi} \Big)^2 \dd \mu_{\sigma}(t)=\gamma_0^{-1}.
\end{equation}
Under Assumption \ref{assumption_sample_3}, we have at most two solutions of (\ref{solutionedge_sample}), since $H(\xi)$ is monotone outside $[\sigma_+^{-1}, \sigma_-^{-1}]$. Let $\xi_+$ be the unique solution of (\ref{solutionedge_sample}) in $ (0, \sigma^{-1}_{+})$. The right boundary of the spectrum is given by $E_+=F(\xi_+)$. As for the left edge, we split into three cases. If $0<\gamma_0<1$, there is a unique solution of (\ref{solutionedge_sample}) in the interval $(\sigma_{-}^{-1}, \infty)$, denoted by $\xi_-$, and the corresponding left edge is given by $E_-=F(\xi_-)$. If $\gamma_0>1$, similarly, there is a unique solution of (\ref{solutionedge_sample}) in the interval $(-\infty,0)$. These edges are referred to as soft edges. For $\gamma_0=1$, the solution does not exist (or say $\xi_-=\infty$), corresponding to $E_-=0$. This scenario is referred to as the hard edge and the density there goes to infinity at rate of $\kappa^{-1/2}$. In this paper, we only consider the right edge for all $0<\gamma_0 < \infty$. Same discussion easily extends to the soft left edges when $\gamma_0 \neq 1$. 

In addition, Assumption \ref{assumption_sample_3} implies that  
\begin{equation}\label{gap_sampleN}
\mbox{dist}( \{ \sigma^{-1}_+, \sigma^{-1}_-\}, \xi_{\pm}) \geq c_0>0,
\end{equation}
provided that $\xi_-$ is finite ($\gamma_0 \neq 1$). The above condition is crucial for the density of the limiting measure to have the square root behavior at the soft edges. It will be proved later in Lemma \ref{sample_m}. Similar assumptions also appeared in \cite{ Bao+Pan+Zhou, edgecondition, sample_lee_kevin} for the rightmost edge and \cite{Hachem+Hardy+Najim_1, isotropic} for all edges in multi-cuts.

Since the convergence rate of $\mu_{\Sigma}$ and $\gamma$ could be very slow, from now on, we work with the finite-N version $\mu_{fc}=\mu_{MP, \gamma} \boxtimes \mu_{\Sigma}$. The corresponding Stieltjes transforms are given by (\ref{mplawgamma}) and (\ref{mplawgamma-11}) replacing the limiting measure $\mu_\sigma$ by $\mu_{\Sigma}$ and $\gamma_0$ by $\gamma$. The following notations, e.g., $m_{fc}$, $\mr$, $E_\pm$, $\xi_\pm$ and $\kappa$ are corresponding to $\mu_{MP, \gamma} \boxtimes \mu_{\Sigma}$ and are $N$-dependent.  To be consistent with the previous sections, we will use the tilde sign to denote the ones with respect to $\mu_{MP, \gamma_0} \boxtimes \mu_\sigma$. 
The second condition of Assumption \ref{assumption_sample_3} ensures the same properties for the support of $\mu_{MP, \gamma_{0}} \boxtimes \mu_{\Sigma}$. In particular, for sufficiently large $N$ we have
\begin{equation}\label{gap_sample}
\min_{i} \{|1- \xi_\pm \sigma_i|\} \geq c_0>0,
\end{equation}
if $\gamma_{0} \neq 1$. If $\gamma_0=1$, it only holds true with respect to $\xi_+$.

Next, we state the local law for the Green function of sample covariance matrix, which is an essential tool in our proof. Let $\mr \equiv \mr(z)$ be the unique solution of finite-N version of (\ref{mplawgamma-11}), i.e.,
\begin{equation}\label{mplawgamma-1}
\gamma-1 -z \mr(z)=\frac{1}{M} \sum_{i=1}^M \frac{\gamma}{1+ \sigma_i \mr(z)},
\end{equation}
such that $\Im \mr(z)>0$, $z\in \C^+$. Define the deterministic control parameters
\begin{equation}\label{control_sample}
\Psi(z):=\sqrt{ \frac{\im \mathfrak m(z)}{N |\eta|}} +\frac{1}{N |\eta|}\,,\qquad\Theta(z):=\frac{1}{N |\eta|}\,, \qquad z=E+\ii \eta \in \C \setminus \R.
\end{equation}
We also introduce the spectral domain, for some small $c>0$,
\begin{equation}\label{ddd_sample}
S':=\Big\{z=E+\ii \eta:  |E| \leq c^{-1}, N^{-1+c} \leq \eta \leq  c^{-1},|z| \geq c \Big\}.
\end{equation}
We further introduce the $N+M$ by $N+M$ matrices
$$R:=\begin{pmatrix}
-\Sigma^{-1} & X\\
X^* & -z
\end{pmatrix}^{-1}; \qquad
\Pi:=\begin{pmatrix}
-\Sigma (1+\mathfrak m\Sigma)^{-1} & 0\\
0 & \mathfrak m(z)
\end{pmatrix}; \qquad
\Sigma':=\begin{pmatrix}
\Sigma  & 0\\
0 & I
\end{pmatrix}, \quad z \in \C^+.$$
Using the Schur decomposition/Feshbach formula, we see that
$$R=\begin{pmatrix}
z \Sigma^{1/2} {G} \Sigma^{1/2} & \Sigma X \mathcal G \\
 \mathcal G X^* \Sigma  & \mathcal G
\end{pmatrix}=
\begin{pmatrix}
z \Sigma^{1/2} {G} \Sigma^{1/2} & \Sigma^{1/2} {G}Y \\
Y^* {G} \Sigma^{1/2}  & \mathcal G
\end{pmatrix}.
$$

We are ready to state the (anisotropic) local law for such random matrix.
\begin{theorem}\label{fluctuation}(Theorem 2.4 in \cite{Alex+Erdos+Knowles+Yau+Yin}, Theorem 3.6 in \cite{isotropic})
	For any deterministic unit vector $v,w \in \C^N$, we have
	$$\Big| \langle v, \Sigma'^{-1}\big( R(z)-\Pi(z) \big)\Sigma'^{-1} w \rangle   \Big| \prec \Psi(z),$$ 
	uniformly in $z  \in S'$. It also implies that
	$$|(\mathcal G(z))_{ij}-\mathfrak m(z) \delta_{ij}| \prec \Psi(z); \qquad \Big|  (G(z))_{ij}+\frac{1}{z(1+\mathfrak m \sigma_{i})} \delta_{ij} \Big| \prec \Psi(z).$$
	In addition, we have the averaged result
	$$\Big| N^{-1} \Tr \mathcal G(z)-\mathfrak m  (z)\Big| \prec  \Theta(z), \qquad \Big| M^{-1} \Tr G(z)+  \int_{\R} \frac{1}{z(1+\mathfrak mt)} \dd \mu_\Sigma(t) \Big| \prec  \Theta(z).$$
\end{theorem}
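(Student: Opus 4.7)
The plan is to work with the $(N+M)\times(N+M)$ linearized Hermitian matrix whose resolvent is exactly $R(z)$, rather than with $G$ and $\mathcal G$ separately. This block object packages both sample covariance resolvents into a single algebraic unit and admits a single self-consistent matrix equation, so all three displayed bounds become consequences of one anisotropic estimate. The starting point is to verify, using the Schur/Feshbach decomposition, that $\Pi(z)$ does solve the linearized self-consistent equation, i.e. that $\mathfrak m(z)$ obeys \eqref{mplawgamma-1} and that $-\Sigma(1+\mathfrak m\Sigma)^{-1}$ is the corresponding $M\times M$ block.

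Next I would set up the stability of this self-consistent equation uniformly on the spectral domain $S'$. Using Assumption \ref{assumption_sample_3} together with \eqref{gap_sample}, the analogue of Lemma \ref{previous} holds: the Jacobian of the self-consistent equation is invertible with a norm controlled by $(\sqrt{\kappa+\eta})^{-1}$ near the regular edge $E_+$ (and $E_-$ when $\gamma_0\neq1$), which is exactly what is needed to convert an approximate solution into a pointwise bound with the control parameter $\Psi(z)$.

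The entrywise and anisotropic bounds are then obtained by a cumulant expansion applied to $\mathbb E[R_{ab}]$ and to high moments of $\langle v,\Sigma'^{-1}(R-\Pi)\Sigma'^{-1}w\rangle$, exploiting the resolvent identity
\begin{equation*}
\frac{\partial R_{ab}}{\partial X_{ij}}=-R_{a,i}R_{M+j,b}-R_{a,M+j}R_{i,b}.
\end{equation*}
Moment bounds on the entries of $X$ from \eqref{moment_condition_sample} make all higher-order cumulant terms negligible, and Ward-type identities $\sum_b|R_{ab}|^2=\Im R_{aa}/\Im z$ turn each loose off-diagonal factor into the same parameter $\Psi(z)$ that the stability analysis can absorb. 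A bootstrap/continuity argument descending from $\eta=O(1)$, where the deterministic bound $\|R\|\le(\Im z)^{-1}$ is sufficient, down to $\eta\ge N^{-1+c}$ then yields the isotropic estimate.

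The averaged bounds, in which the error improves from $\Psi(z)$ to $\Theta(z)=(N\eta)^{-1}$, require the fluctuation averaging mechanism: the key input is that the sums $\frac1N\sum_j w_j\bigl((\mathcal G)_{jj}-\mathfrak m\bigr)$ against slowly varying weights $w_j$ enjoy an extra $(N\eta)^{-1/2}$ cancellation per pair of factors after careful graph-theoretic accounting of the cumulant expansion. The main obstacle is precisely this square-root degeneracy of the stability operator near $E_\pm$, where the usual linearization fails and only the weaker $\sqrt{\kappa+\eta}$ stability survives; this is exactly what forces the $\sqrt{\Im\mathfrak m/(N\eta)}$ term in $\Psi$. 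Since the required analysis has been carried out in full in \cite{Alex+Erdos+Knowles+Yau+Yin} and \cite{isotropic} under precisely the conditions of Assumptions \ref{assumption_sample_1}--\ref{assumption_sample_3}, we quote their results rather than reproduce the argument.
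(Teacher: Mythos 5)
Your proposal is correct and takes essentially the same route as the paper: the paper simply quotes Theorem 2.4 of \cite{Alex+Erdos+Knowles+Yau+Yin} and Theorem 3.6 of \cite{isotropic} without reproducing the proof, and you conclude in the same way, after a (accurate) sketch of the standard machinery—linearization of $R$, verification and stability of the self-consistent equation near the regular edges, cumulant expansion with Ward identities, bootstrap in $\eta$, and fluctuation averaging for the $\Theta$-bound—that those references develop under exactly the stated assumptions.
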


In the following, we state some properties of the Stieltjes transform $\mathfrak m$ in (\ref{mplawgamma-11}), whose proofs are given in Appendix B. Define the spectral domain, for some small $c>0$,
$$S:=\Big\{z=E+\ii \eta:  |E| \leq c^{-1}, 0 < \eta \leq  c^{-1},|z| \geq c \Big\}.$$
And set $\kappa \equiv \kappa(E):=\min \{ |E_+-E|, |E_--E| \}.$

\begin{lemma}\label{sample_m}
\begin{enumerate}
\item For $z \in S$ and sufficiently large $N$, we have 
	\begin{equation}\label{1+tm}
	|\mathfrak m(z)| \sim 1; \qquad \min_i |1+ \sigma_i \mathfrak m(z)|>c_0.
	\end{equation}	
\item	For $z \in S$ and sufficiently large $N$, we have 
	\begin{equation}\label{im_m_fc}
		|\Im \mr(z)|  \sim \begin{cases}
		\sqrt{\kappa+\eta}, & \mbox{if } E \in [E_-,E_+], \\
		\frac{\eta}{\sqrt{\kappa+\eta}}, & \mbox{if otherwise}.
		\end{cases}
		\end{equation}
\item	 For $z \in S$ with $E_--c<E< E_++c$ and $\eta \leq c$ for some small $c>0$, we have
	\begin{equation}\label{coefficient}
	1-\frac{1}{M} \sum_{i=1}^M \frac{\gamma \sigma_i}{z(1+\mathfrak{m}(z) \sigma_i)^2}=-\frac{\mr(z)} {z \mr'(z)}\sim \sqrt{\kappa+\eta}.
	\end{equation}
\item	Under the same condition as in (3), we have
         \begin{equation}\label{prime_gamma}
	 \mr'(z)=-\frac{\mr(z)}{z-\frac{1}{M} \sum_{i=1}^M \frac{\gamma \sigma_i}{(1+\mathfrak{m}(z) \sigma_i)^2}} \sim \frac{1}{\sqrt{\kappa+\eta}}.
	\end{equation}
\end{enumerate}
\end{lemma}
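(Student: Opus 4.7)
The four statements are the sample-covariance analogues of Lemma~\ref{previous} and Lemma~\ref{distance1}, and I would prove them in parallel with those results by combining the self-consistent equation \eqref{mplawgamma-1} with an inverse-function expansion around the edges $\xi_\pm$ (restricted to the right edge $\xi_+$ when $\gamma_0=1$).

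For (1), I would start from the equivalent form $z=-\mathfrak m^{-1}+\gamma M^{-1}\sum_i\sigma_i/(1+\sigma_i\mathfrak m)$ of \eqref{mplawgamma-1}. Since $\mathfrak m$ is analytic on $\C^+$ with $|\mathfrak m(z)|\to 0$ as $|z|\to\infty$, a continuity/bootstrap argument along vertical lines, driven by the map $z\mapsto\mathfrak m$, shows that on $S$ the Stieltjes transform cannot approach either a pole $-\sigma_i^{-1}$ or $0$: approaching $-\sigma_i^{-1}$ would make $|z|$ blow up, and $|\mathfrak m|\to 0$ forces $|z|\to\infty$ in \eqref{mplawgamma-1}. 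Combined with the uniform separation~\eqref{gap_sample} one obtains $|\mathfrak m(z)|\sim 1$ and $\min_i|1+\sigma_i\mathfrak m(z)|>c_0$ on $S$ for $N$ large.

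For (2), (3), (4) the single unifying tool is the inverse function $F(\xi)=\xi^{-1}+\gamma M^{-1}\sum_i\sigma_i/(1-\sigma_i\xi)$, so that $z=F(-\mathfrak m(z))$. A direct differentiation gives
\begin{equation}
\xi^2 F'(\xi)=-1+\gamma M^{-1}\sum_i (\sigma_i\xi)^2/(1-\sigma_i\xi)^2,
\end{equation}
which vanishes exactly at $\xi=\xi_\pm$ by the finite-$N$ analogue of \eqref{solutionedge_sample}. Computing $F''(\xi_+)=2\xi_+^{-3}-2\gamma M^{-1}\sum_i\sigma_i^3/(1-\sigma_i\xi_+)^3$ and invoking Assumption~\ref{assumption_sample_3} together with \eqref{gap_sample} gives $|F''(\xi_\pm)|\geq c>0$ uniformly in large~$N$, so Taylor expansion yields
\begin{equation}
z-E_\pm=\tfrac12 F''(\xi_\pm)(\xi-\xi_\pm)^2\bigl(1+O(|\xi-\xi_\pm|)\bigr).
\end{equation}
Inverting (with the branch of the square root chosen so that $\mathfrak m:\C^+\to\C^+$, as in \eqref{A+}) gives
\begin{equation}
\mathfrak m(z)=-\xi_\pm\mp c_\pm\sqrt{z-E_\pm}\bigl(1+A_\pm(\sqrt{z-E_\pm})\bigr),
\end{equation}
with $|c_\pm|\sim 1$ and $A_\pm$ analytic with $A_\pm(0)=0$. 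Taking imaginary parts on $S$ and splitting $E\in[E_-,E_+]$ versus $E\not\in[E_-,E_+]$ immediately yields the two estimates in~\eqref{im_m_fc}; the regime away from the edges is handled by Assumption~\ref{assumption_sample_3} combined with a standard compactness/continuity argument.

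For (3) and (4), I would differentiate \eqref{mplawgamma-1} in $z$ to obtain the algebraic identity
\begin{equation}
\mathfrak m'(z)\Bigl[z-\tfrac{1}{M}\sum_i\tfrac{\gamma\sigma_i}{(1+\sigma_i\mathfrak m(z))^2}\Bigr]=-\mathfrak m(z),
\end{equation}
which, divided by $z\mathfrak m'(z)$ and using $|z|\geq c$ from the definition of $S'$, is exactly the identity displayed in \eqref{coefficient}. The size estimate $\mathfrak m'(z)\sim(\kappa+\eta)^{-1/2}$ claimed in \eqref{prime_gamma} then follows by differentiating the square-root expansion from the previous paragraph, and \eqref{coefficient} reduces to $\sqrt{\kappa+\eta}$ by combining (1) with \eqref{prime_gamma}.

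The main obstacle is the uniform lower bound $|F''(\xi_\pm)|\geq c>0$ in step (c) above: one needs to rule out that the mass of $\mu_\Sigma$ accumulates in such a way that $F''(\xi_\pm)$ degenerates as $N\to\infty$. This is precisely where the strict inequality with margin $w>0$ in Assumption~\ref{assumption_sample_3} is used, together with \eqref{gap_sample}, in exact parallel to the proof of Lemma~\ref{previous}(4) in~\cite{bulk}; a secondary but routine point is the hard-edge case $\gamma_0=1$, which is avoided by restricting to the right edge~$\xi_+$.
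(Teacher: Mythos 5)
Your overall plan — invert the self-consistent equation via $F(\xi)=\xi^{-1}+\gamma M^{-1}\sum_i\sigma_i/(1-\sigma_i\xi)$, Taylor-expand $F$ around $\xi_\pm$, and use the resulting square-root behaviour of $\mathfrak m$ — is the same device the paper uses for part (2) near the edge. But there are two substantive problems.

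First, a computational error: differentiating $F'(\xi)=-\xi^{-2}+\gamma M^{-1}\sum_i\sigma_i^2(1-\sigma_i\xi)^{-2}$ gives
\begin{equation*}
F''(\xi)=\frac{2}{\xi^{3}}+\frac{2\gamma}{M}\sum_{i}\frac{\sigma_i^3}{(1-\sigma_i\xi)^3},
\end{equation*}
with a plus sign, not the minus sign you wrote. With your sign the two terms compete and you would have to rule out cancellation; with the correct sign (and, as the paper does, after substituting $F'(\xi_+)=0$ to rewrite $F''(\xi_+)=2\gamma M^{-1}\sum_i\sigma_i^2/(\xi_+(1-\sigma_i\xi_+)^3)$) all terms are manifestly positive and the uniform lower bound $F''(\xi_+)\geq c$ follows directly from $\xi_+>c$ and \eqref{gap_sample}. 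Your worry at the end about $F''(\xi_+)$ degenerating is a red herring once the sign and the simplification are in place.

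Second, and more seriously, the route you take for (3) and (4) is different from the paper's and leaves a genuine gap. You propose to get \eqref{prime_gamma} by differentiating the edge expansion, and then recover \eqref{coefficient} from (1) and \eqref{prime_gamma} via the algebraic identity. But the square-root expansion of $\mathfrak m$ is only valid in a neighbourhood of $E_\pm$; the statements (3), (4) are asserted on the whole strip $E_--c<E<E_++c$, $\eta\le c$, which includes the bulk. Near an interior energy the expansion is unavailable, $\kappa\sim 1$, and you still need a lower bound on $|\mathfrak m'|$ (equivalently, that $1-M^{-1}\sum_i\gamma\sigma_i/(z(1+\sigma_i\mathfrak m)^2)$ stays of order one and doesn't vanish). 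Your proposal offers nothing for this. The paper instead proves \eqref{coefficient} directly: the upper bound comes from the identity $|z-\gamma M^{-1}\sum_i\sigma_i/|1+\sigma_i\mathfrak m|^2|=|\mathfrak m|\eta/\Im\mathfrak m$ obtained by splitting the self-consistent equation into real and imaginary parts, and the lower bound comes from a case analysis (near-edge with $E$ inside or outside $[E_-,E_+]$; bulk with $\Re\mathfrak m>0$ or $\Re\mathfrak m\le 0$). Only then is \eqref{prime_gamma} deduced from \eqref{coefficient}, rather than the other way around. Finally, your argument for (1) is a heuristic sketch — that $\mathfrak m$ cannot approach $0$ or $-\sigma_i^{-1}$ — rather than a proof, though it is pointing in the right direction; the paper handles this by reading the boundedness of $\mathfrak m$ directly off the first form of \eqref{mplawgamma-11} together with \eqref{gap_sample} in the same three-region split used for (2).
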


We are now prepared to state our main results for the sample covariance matrix.

\begin{proposition}\label{clt_sample}
Consider a sample covariance matrix satisfying Assumption \ref{assumption_sample_1}, \ref{assumption_sample_2} and \ref{assumption_sample_3} and $E_0$ is chosen to be away from zero, then Proposition \ref{prop} and \ref{prop2} hold true with 
\begin{equation}\label{kernel_sample}
K(z_1,z_2)=2 \Big( \frac{\mr_1' \mr_2'}{(\mr_1-\mr_2)^2}-\frac{1}{(z_1-z_2)^2} \Big)+K_4 \gamma  \pzab \Big( \frac{1}{M} \sum_{i=1}^M \frac{1}{(1+\mathfrak m_1 \sigma_i)(1+\mathfrak m_2 \sigma_i)}    \Big),
\end{equation}
where we use $\mr_1$ and $\mr_2$ to denote $\mr(z_1)$ and $\mr(z_2)$, and 
\begin{equation}\label{bz_sample}
b(z)=\Big( \frac{ (\mr'(z))^2}{\mr(z)} +K_4 \mr(z) \mr'(z)\Big) \frac{1}{M} \sum_{i=1}^{M} \frac{\gamma \sigma_i^2 }{(1+\mr(z) \sigma_i)^3}.
\end{equation}
\end{proposition}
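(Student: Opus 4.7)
The plan is to run the proof of Propositions \ref{prop} and \ref{prop2} in parallel for the sample-covariance model, with the cumulant expansion performed on the independent entries $\{X_{ij}\}$ of $X_N$ and the anisotropic/averaged local laws of Theorem \ref{fluctuation} playing the role of Theorem \ref{locallaw}. First I apply the Helffer--Sj\"ostrand formula to $\Tr f(H_M)-\E\Tr f(H_M)$ and truncate the integration to $\Omega_0$ exactly as in Section \ref{sec:strategy}, using the monotonicity of $\eta\mapsto\eta\,\Im m_M(E+\ii\eta)$ together with the averaged local law to absorb the small-$|\eta|$ part; the assumption that $E_0$ is bounded away from $0$ keeps the spectral parameters in the domain $S'$ of (\ref{ddd_sample}) throughout. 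The reduction to computing $\E[e_0(\lambda)(\Tr\mathcal G(z)-\E\Tr\mathcal G(z))]$ up to the quantities in (\ref{vf}) and (\ref{bias_formula}) then proceeds verbatim, and one may freely switch $\Tr G$ and $\Tr\mathcal G$ via the deterministic shift (\ref{gamma_N}).

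For the core computation, the cleanest route is to work with the $(M+N)\times(M+N)$ linearization $R$ used in Theorem \ref{fluctuation}, whose defining identity $R\widetilde M=I$ is \emph{affine} in the independent variables $X_{c\alpha}$. Reading off the $(M+\alpha,M+\alpha)$-entry of $R\widetilde M=I$ gives
\begin{equation*}
-z\,\mathcal G_{\alpha\alpha}=1-\sum_{c=1}^{M}R_{M+\alpha,c}\,X_{c\alpha},
\end{equation*}
which is the direct analog of $(z-a_i)G_{ii}=(HG)_{ii}-1$. The resolvent-derivative rule $\partial R_{IJ}/\partial X_{c\alpha}=-(R_{Ic}R_{M+\alpha,J}+R_{I,M+\alpha}R_{c,J})$ then lets me apply Lemma \ref{cumulant} to $X_{c\alpha}$ through fourth order, yielding a three-term split $I_1+I_2+I_3$ as in Section \ref{sec:proof_of_main_lemma}. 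The second-cumulant sum $I_1$ produces the Cauchy-type piece of the kernel; the third-cumulant sum $I_2$ is negligible by the anisotropic-local-law contraction of (\ref{estimate}) combined with a continuity argument on a lattice of $S'$; and the fourth-cumulant sum $I_3$ produces the $K_4$-correction, whose coefficient depends on the $\sigma_i$ only through the single constant (\ref{k_4}).

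The main obstacle will be the algebraic identification of the leading kernel as the precise expression (\ref{kernel_sample}). After summing over $\alpha$, dividing by the stability factor $-\mr(z)/(z\,\mr'(z))$ from (\ref{coefficient})---which plays the role that $(1-I_s(z))^{-1}$ played in Section \ref{sec:proof_of_main_lemma}---and running the resolvent-identity case-split of Section 5.4 for $\mathcal G$, one arrives at a kernel expressed via $J_1(z_1,z_2):=M^{-1}\sum_i\sigma_i/[(1+\mr_1\sigma_i)(1+\mr_2\sigma_i)]$ and $J(z_1,z_2):=M^{-1}\sum_i 1/[(1+\mr_1\sigma_i)(1+\mr_2\sigma_i)]$. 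Using (\ref{mplawgamma-1}) in the form $\gamma J_1=(z_2\mr_2-z_1\mr_1)/(\mr_2-\mr_1)$ together with (\ref{prime_gamma}), the second-cumulant contribution collapses to $2\,\partial^2_{z_1 z_2}\log[(\mr_1-\mr_2)/(z_1-z_2)]=2(\mr_1'\mr_2'/(\mr_1-\mr_2)^2-1/(z_1-z_2)^2)$, matching the first term of (\ref{kernel_sample}); the fourth-cumulant contribution is $K_4\gamma\,\partial^2_{z_1 z_2}J$, matching the second. The bias (\ref{bz_sample}) is obtained from the same expansion with $e_0(\lambda)$ removed, paralleling Section \ref{sec:expectation} and simplifying $M^{-1}\sum_i\sigma_i^2/(1+\mr\sigma_i)^3$ via (\ref{prime_gamma}). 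Finally, the edge computations transfer by expanding $z=F(\xi)$ as a square root at $\xi=\xi_+$, with $\xi=-\mr(z)$ now playing the role that $\zeta=z+m_{fc}(z)$ played in Section \ref{sec:proof_of_main_thm}.
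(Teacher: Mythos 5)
Your proposal is correct and the overall strategy (Helffer--Sj\"ostrand truncation to $\Omega_0$, cumulant expansion on the independent entries $X_{ij}$, three-term split $I_1+I_2+I_3$ with the anisotropic local law of Theorem \ref{fluctuation} controlling the error, and then the algebraic collapse of the leading terms into the kernel via the finite-$N$ self-consistent equation for $\mathfrak m$) matches the paper's proof. There is, however, one genuine technical difference worth noting: you propose to perform the expansion on the $(M+N)\times(M+N)$ linearization $R$ via the identity $-z\,\mathcal G_{\alpha\alpha}=1-\sum_c R_{M+\alpha,c}X_{c\alpha}$ obtained from $R\widetilde M=I$, whereas the paper expands directly from the $M\times M$ identity $zG_{ii}=\sqrt{\sigma_i}\sum_j X_{ij}(GY)_{ij}-1$ and uses the product-structured derivative rule $\partial G_{ab}/\partial X_{jk}=[-G_{aj}(Y^*G)_{kb}-(GY)_{ak}G_{jb}]\sqrt{\sigma_j}$, which requires tracking $(GY)$ cross-terms and bounding quantities like $(Y^*GY)_{jj}$ and $(GY)_{ij}$ deterministically as in (\ref{hgh})--(\ref{hg}). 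Your linearized route buys a more uniform derivative formula $\partial R_{IJ}/\partial X_{c\alpha}=-(R_{Ic}R_{M+\alpha,J}+R_{I,M+\alpha}R_{cJ})$, stays entirely in the $R$-algebra in which the anisotropic local law is stated, and avoids the $(GY)$ bookkeeping; the paper's route avoids carrying the $(M+N)$-dimensional object and leads immediately to the stability factor $1-\frac{1}{M}\sum_i\frac{\gamma\sigma_i}{z(1+\mathfrak m\sigma_i)^2}=-\mr/(z\mr')$ of (\ref{coefficient}). Both routes converge on the same partial-fraction identities for $J$ and $J_1$ (via (\ref{mplawgamma-1})) and the same logarithmic-derivative collapse $2\,\partial^2_{z_1z_2}\log[(\mr_1-\mr_2)/(z_1-z_2)]$, so the final kernel (\ref{kernel_sample}) and bias (\ref{bz_sample}) come out identically, and your transfer of the edge computation via the square-root expansion of $\xi=-\mr(z)$ near $\xi_\pm$ also matches the paper.
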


Proposition \ref{clt_sample} implies that Theorem \ref{meso} and \ref{mesoedge} hold true for sample covariance matrix. More specifically, we have the following theorem.
\begin{theorem}\label{meso_sample}
	Let $H_N$ be a sample covariance matrix of the form (\ref{H_sample}) satisfying Assumptions \ref{assumption_sample_1}-\ref{assumption_sample_3}. Let $N^{-1+c_1} \leq \eta_0 \leq N^{-c_1}$ with some $c_1>0$ and fix $E_0 \in (E_-,E_+)$, such that~$\kappa_0:=\mathrm{dist}(\mathrm{supp}(f_N),\{E_\pm\}) > c_0$, for some $c_0>0$ and sufficiently large $N$. Then, for any function $g \in C^2_c(\R)$, the linear eigenvalue statistics (\ref{linear_stat})
	converges in distribution to the Gaussian random variable $\mathcal{N}\Big(0, \frac{1}{ \pi} \int_{\R} |\xi| |\hat{g}(\xi)|^2 \dd \xi \Big)$,
	where $\hat{g}(\xi):=(2 \pi)^{-1/2} \int_{\R} g(x) e^{-\i \xi x} \dd x$. 

	In addition, the linear statistics (\ref{linear_stat}) with $E_0=E_+$ and $N^{-\frac{2}{3}+c_2} \leq \eta_0 \leq N^{-c_2}$ for some $c_2>0$, converges in distribution to a Gaussian random variable $\mathcal{N} \Big( \frac{g(0)}{4}, \frac{1}{2 \pi} \int_{\R} |\xi| |\hat{h}(\xi)|^2 \dd \xi \Big)$,
	where $h(x)=g(- x^2)$ and $\hat{h}(\xi):=(2 \pi)^{-1/2} \int_{\R} h(x) e^{-\i \xi x} \dd x$. 
	Furthermore, if $\gamma_0 \neq 1$, a similar CLT for $E_0=E_-$ can be obtained with $h(x)=g(x^2)$.
\end{theorem}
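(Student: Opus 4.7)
The plan is to derive Theorem \ref{meso_sample} from Proposition \ref{clt_sample} in exactly the same manner that Theorems \ref{meso} and \ref{mesoedge} are derived from Propositions \ref{prop} and \ref{prop2}. That is, I would integrate the ODE $\phi'(\lambda)=-\lambda\phi(\lambda)V(f)+\tilde{\mathcal{E}}$ and apply the L\'evy continuity and Arzel\`a--Ascoli arguments used for Theorem \ref{thm:weak_convergence}, thereby reducing the problem to two tasks: computing $\lim_{N\to\infty} V(f)$ and the limiting bias for the scaled test function $f_N$ in (\ref{fn}), once at bulk energies and once at the edge $E_+$ (the edge $E_-$ when $\gamma_0\neq 1$ being analogous).

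For the bulk case ($\kappa_0>c_0$), I would apply Stokes' formula exactly as in the proof of Lemma \ref{vfbulk} to convert the double area integral defining $V(f)$ into contour integrals on $\Gamma_1\cup\Gamma_2$ at heights $N^{-\tau}\eta_0$ and $\frac{1}{2}N^{-\tau}\eta_0$. Away from the edges, Lemma \ref{sample_m} gives $|\mathfrak{m}|\sim 1$, $|\mathfrak{m}'|=O(1)$ and $\min_i|1+\mathfrak{m}\sigma_i|\geq c_0$, so the fourth-cumulant contribution in (\ref{kernel_sample}) is $O(1)$ uniformly and produces only an $O(\eta_0)$ error after Stokes. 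For the remaining piece $2\bigl(\mathfrak{m}_1'\mathfrak{m}_2'/(\mathfrak{m}_1-\mathfrak{m}_2)^2-1/(z_1-z_2)^2\bigr)$ I would split into the same two cases as in Lemma \ref{vfbulk}: when $z_1,z_2$ lie in the same half-plane, analyticity of $\mathfrak{m}$ makes the kernel bounded and the contribution $O(\eta_0)$; in opposite half-planes, a Taylor expansion of $\mathfrak{m}$ along a segment joining $z_1,z_2$ shows that $\mathfrak{m}_1'\mathfrak{m}_2'/(\mathfrak{m}_1-\mathfrak{m}_2)^2$ is also $O(1)$ in the bulk, so the singular term $-2/(z_1-z_2)^2$ survives, and after the identity $\int\!\int (\tilde f(z_1)-\tilde f(z_2))^2/(z_1-z_2)^2$ followed by the change of variables (\ref{change}), one recovers the sine-kernel variance $\pi^{-1}\int|\xi||\hat g(\xi)|^2\,d\xi$.

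For the edge case ($E_0=E_+$), the crucial input is a square-root expansion of $\mathfrak{m}$ analogous to (\ref{A+}). Writing $z=F(\xi)$ with $F(\xi)=\xi^{-1}+\gamma\int t(1-t\xi)^{-1}d\mu_\Sigma(t)$, the definition (\ref{solutionedge_sample}) of $\xi_+$ together with the separation bound (\ref{gap_sample}) yields $F'(\xi_+)=0$, $F''(\xi_+)\sim 1$, $|F^{(k)}(\xi_+)|=O(1)$, and hence an analytic inverse $\mathfrak{m}(z)=-\xi_++c_+\sqrt{z-E_+}\bigl(1+A_+(\sqrt{z-E_+})\bigr)$ with $c_+\sim 1$ and $A_+(0)=0$. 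This gives $\mathfrak{m}'(z)=c_+/(2\sqrt{z-E_+})+O(1)$, whence
\[
\frac{\mathfrak{m}_1'\mathfrak{m}_2'}{(\mathfrak{m}_1-\mathfrak{m}_2)^2}=\frac{1}{4\sqrt{z_1-E_+}\sqrt{z_2-E_+}\bigl(\sqrt{z_1-E_+}-\sqrt{z_2-E_+}\bigr)^2}+\text{lower order},
\]
which is structurally identical to the kernel appearing in Lemma \ref{bfedge}. I would then carry out the four-piece contour decomposition $V_3=V_3^{++}+V_3^{--}+V_3^{+-}+V_3^{-+}$, substitute $w=\sqrt{z-E_+}$, invoke Cauchy's theorem to cancel the $\tilde f(z_j)^2$ pieces, and observe that the $-1/(z_1-z_2)^2$ subtraction in (\ref{kernel_sample}) integrates to zero upon the limit (its opposite-half-plane contribution cancels against the image of the horizontal parts, as in Figure \ref{figure11}). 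The $K_4$-term in (\ref{kernel_sample}) at the edge behaves like $\mathfrak{m}_1'\mathfrak{m}_2'\cdot O(1)\sim 1/\sqrt{(z_1-E_+)(z_2-E_+)}$, and a direct estimate (bounded by Lemma \ref{lemma3}) shows its contribution to $V(f)$ is $O(\sqrt{\eta_0}N^{C\tau})$, hence negligible. The limiting variance thus matches $\frac{1}{2\pi}\int|\xi||\hat h(\xi)|^2d\xi$ with $h(x)=g(-x^2)$, as required.

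For the bias I would mimic Section \ref{sec:expectation}: in the bulk Lemma \ref{sample_m} gives $|b(z)|=O(1)$ so Stokes' formula yields $O(\eta_0)$; at the edge I substitute the square-root expansion into (\ref{bz_sample}) and use $\frac{1}{M}\sum_i\gamma\sigma_i^2/(1+\mathfrak{m}\sigma_i)^3$ together with (\ref{prime_gamma}) and $\mathfrak{m}(E_+)=-\xi_+$ to identify the leading behaviour $b(z)=-\frac{1}{2(z-E_+)}+O(|z-E_+|^{-1/2})$ (the $K_4$ term being subleading as it carries one fewer factor of $\mathfrak{m}'$), after which the Sokhotski--Plemelj identity produces the bias $g(0)/4$. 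The main technical obstacle I anticipate is the bookkeeping of the $K_4$ correction near the edge and, more substantively, the clean verification that the leading coefficient in the edge expansion of $b(z)$ is exactly $-\tfrac{1}{2}$; this amounts to differentiating (\ref{mplawgamma-1}) twice at $\xi=\xi_+$ and keeping track of how $F'(\xi_+)=0$ interacts with the cubic sum $\sum_i\sigma_i^2/(1+\mathfrak{m}\sigma_i)^3$, which is the natural analogue of the computation used for deformed Wigner matrices in Section \ref{sec:expectation}.
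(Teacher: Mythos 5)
Your proposal follows the paper's own proof essentially step for step: integrate the ODE from Proposition \ref{clt_sample}, apply Stokes' formula with the same contour decomposition used for the deformed Wigner case (same-/opposite-half-plane splitting in the bulk, the four-piece decomposition $V_3^{\pm\pm}$ followed by the change of variable $w=\sqrt{z-E_+}$ at the edge), use the square-root inverse expansion of $\mathfrak{m}$ near $\xi_+$ derived from $F'(\xi_+)=0$ together with (\ref{gap_sample}), and finish with Sokhotski--Plemelj for the bias. Two small points should be tightened. In the opposite-half-plane bulk case you cannot Taylor-expand $\mathfrak{m}$ along a segment joining $z_1$ and $z_2$, since that segment crosses the branch cut on $[E_-,E_+]$; the correct (and simpler) argument is that $|\mathfrak{m}_1-\mathfrak{m}_2|\ge|\Im\mathfrak{m}_1|+|\Im\mathfrak{m}_2|\ge 2c>0$ by (\ref{im_m_fc}), and the Taylor expansion is what is needed in the same-half-plane case to verify that the singularities of $\mathfrak{m}_1'\mathfrak{m}_2'/(\mathfrak{m}_1-\mathfrak{m}_2)^2$ and $1/(z_1-z_2)^2$ cancel. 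At the edge, substituting the square-root expansion into (\ref{bz_sample}) via the self-consistent-equation identity $\frac{(\mathfrak{m}')^2}{\mathfrak{m}}\frac{1}{M}\sum_i\frac{\gamma\sigma_i^2}{(1+\sigma_i\mathfrak{m})^3}=\frac{\mathfrak{m}''}{2\mathfrak{m}'}-\frac{\mathfrak{m}'}{\mathfrak{m}}$ gives the leading behaviour $\frac{\mathfrak{m}''}{2\mathfrak{m}'}\approx-\frac{1}{4(z-E_+)}$, not $-\frac{1}{2(z-E_+)}$ as you write; this does not alter the final answer $g(0)/4$ because in the sample covariance case the analogue of (\ref{bias_formula}) carries the prefactor $\frac{1}{\pi}$ rather than $\frac{1}{2\pi}$ (here $\E(\Tr G-Mm_{fc})=b(z)+O_\prec$, without the extra factor $\frac12$ that arises for deformed Wigner matrices), but you should keep the constants consistent when you carry the computation out.
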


{\it Remark:} We remark that (\ref{k_4}) in Assumption \ref{assumption_sample_1}  can  be removed. In addition, we can relax the single support condition for $\mu_{fc}$ by assuming instead that the cuts of the support of $\mu_{fc}$ are separated by order one and the density has square root behaviors at the edges away from zero.

\subsection{Proof of the CLT and variance computation}

	From the definition of the Green function and (\ref{H_sample}), we get
	\begin{equation}\label{uselater_sample}
	z {G}_{ii}=(HG)_{ii}-1=(\Sigma^{1/2} X X^* \Sigma^{1/2} {G})_{ii}-1=\sqrt{\sigma_i} \sum_{j=1}^N  X_{ij} (G Y)_{ij} -1.
	\end{equation}
	Similarly as (\ref{sum}), by the cumulant expansion formula, we have
\begin{align}\label{sum_sample}
z\E [\ea(G_{ii}-\E  G_{ii})]=I_1+I_2+I_3+O_{\prec}(N^{-\frac{3}{2}} (1+|\lambda|^4)),
\end{align}
where
\begin{align*}
I_1:&=\frac{\sqrt{\sigma_i}}{N}\sum_{j=1}^N  c^{(2)}_{ij} \bigg( \E \Big[\frac{\partial  \ea}{\partial X_{ij}} ( G Y)_{ij} \Big]+\E \Big[ \Big( \frac{\partial (G Y)_{ij} }{\partial X_{ij}} - \E \Big[\frac{\partial (G Y)_{ij} }{\partial X_{ij}}  \Big] \Big) \ea \Big] \bigg),\\
I_2:&=\frac{\sqrt{\sigma_i}}{2! N^{\frac{3}{2}}}\sum_{j=1}^N  c^{(3)}_{ij} \bigg( \E \Big[\frac{\partial^2 \ea}{\partial^2 X_{ij}} (G Y)_{ij}  \Big]+2\E \Big[\frac{\partial \ea}{\partial X_{ij}} \frac{\partial (G Y)_{ij}}{\partial X_{ij}} \Big]+\E \Big[\Big(\frac{\partial^2 (G Y)_{ij} }{\partial^2 X_{ij}} -\E \Big[\frac{\partial^2 (GY)_{ij} }{\partial^2 X_{ij}}  \Big] \Big) \ea \Big] \bigg),\\
I_3:&=\frac{\sqrt{\sigma_i}}{3! N^{2}}\sum_{j=1}^N c^{(4)}_{ij} \bigg( \E \Big[\frac{\partial^3  \ea}{\partial^3 X_{ij}} (G Y)_{ij} \Big]+3 \E \Big[\frac{\partial^2  \ea}{\partial^2 X_{ij}} \frac{\partial (G Y)_{ij}}{\partial X_{ij}} \Big]+3\E \Big[\frac{\partial  \ea}{\partial X_{ij}} \frac{\partial^2 (G Y)_{ij}}{\partial^2 X_{ij}} \Big]\\
&\qquad +\E \Big[(1-\E) \Big( \frac{\partial^3 (GY)_{ij} }{\partial^3 X_{ij}} \Big)\ea \Big]  \bigg).
\end{align*}
The last term on the right side of (\ref{sum_sample}) is estimated by (\ref{dke_sample}), (\ref{moment_condition_sample}) and Lemma \ref{dominant}. The argument is similar as in (\ref{sum}). The only things to check are the deterministic bounds of $(Y^*GY)_{ii}$ and $(GY)_{ij}$. Note that from (\ref{H_sample}) and (\ref{mathcalH}), $YG=\mathcal{G}Y$ and  $|G_{ij}| =O(N^c)$, for $z \in \Omega_0 \cap S'$, thus we have
\begin{align}
&(Y^*GY)_{ii}=(Y^* Y \mathcal{G} )_{ii}=(\mathcal{H} \mathcal{G} )_{ii}=(1+z \mathcal{G})_{ii}=O(N^{c_1});\label{hgh}\\
&|(GY)_{ij}| \leq \sqrt{N} (G Y Y^* G^*)_{ii}^{1/2}=\sqrt{N} (z(G G^*)_{ii}+G_{ii}^*)^{1/2}=\sqrt{N} \Big(\frac{z}{2 \mathrm{Im}z}(G_{ii}- G^*_{ii})+G_{ii}^*\Big)^{1/2}=O(N^{c_2}),\label{hg}
\end{align}
where we use Cauchy-Schwarz inequality and the resolvent identity (\ref{resolvent_identity}). 

Using the formulas,
\begin{equation}\label{dH_sample}
\frac{\partial G_{ab} }{\partial Y_{jk}}=-G_{aj} (Y^*G)_{kb}-(GY)_{ak} G_{jb}, \qquad \frac{\partial G_{ab} }{\partial X_{jk}}=\frac{\partial G_{ab} }{\partial Y_{jk}} \sqrt{\sigma_{j}},
\end{equation}
we obtain the analogue of Lemma \ref{lemma2}:

\begin{lemma}\label{lemma2_sample}
For any $i,j$, we have	
\begin{align}
	\frac{\partial \ea}{\partial X_{ij}}&=-\frac{\ii 2 \sqrt{\sigma_i} \lambda}{\pi}  \ea   \int_{\Omega_{0}}\pzz \tf(z) \frac{\dd}{\dd z} (GY)_{ij} \dd^2 z;\label{lemma21_sample}\\
	\frac{\partial^2 \ea}{\partial^2 X_{ij}}&=-\frac{\ii 2 \sigma_i \lambda}{\pi} \ea  \int_{\Omega_{0}}\pzz \tf(z) \frac{\dd}{\dd z} \Big(\frac{\mathfrak m}{1+\mathfrak m \sigma_i} \Big) \dd^2 z+O_{\prec} \Big( \frac{(1+|\lambda|)^2}{\sqrt{N \eta_0}}\Big).\label{lemma22_sample}
	\end{align}
	In general, for any integer $k \in \N$, we have
	\begin{equation}\label{dke_sample}
	\Big| \frac{\partial^k (GY)_{ij}}{\partial X^k_{ij}} \Big| \prec O(1); \quad \Big| \frac{\partial^k  \ea}{\partial^k X_{ij}} \Big| \prec O((1+|\lambda|)^k).
	\end{equation}
\end{lemma}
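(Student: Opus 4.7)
The plan is to follow the structure of the Wigner counterpart Lemma~\ref{lemma2}, replacing the Wigner derivation rule by its sample covariance analogue (\ref{dH_sample}) and invoking the anisotropic local law (Theorem~\ref{fluctuation}) on the block form $R$ to control $(GY)_{ij}$, $G_{ii}$ and $\mathcal{G}_{jj}$. Throughout, only $\Tr G(z)$ in the exponent of $\ea$ in (\ref{e2}) depends on $X_{ij}$, so every differentiation pulls down a factor $(\i\lambda/\pi)\int_{\Omega_0}\pzz\tf(z)(\cdot)\,\dd^2z$ acting on derivatives of $\Tr G(z)$.

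For (\ref{lemma21_sample}), I first compute via (\ref{dH_sample}) and the chain rule $\partial/\partial X_{ij}=\sqrt{\sigma_i}\,\partial/\partial Y_{ij}$ that
\[
\frac{\partial G_{kk}}{\partial X_{ij}}=-\sqrt{\sigma_i}\bigl[G_{ki}(Y^*G)_{jk}+(GY)_{kj}G_{ik}\bigr],
\]
and sum over $k$. Using $\sum_k G_{ki}(Y^*G)_{jk}=(Y^*G^2)_{ji}=(G^2Y)_{ij}$ (by symmetry/Hermiticity of $G$) and $\sum_k(GY)_{kj}G_{ik}=(G^2Y)_{ij}$, I obtain $\partial_{X_{ij}}\Tr G(z)=-2\sqrt{\sigma_i}(G^2Y)_{ij}=-2\sqrt{\sigma_i}\frac{\dd}{\dd z}(GY)_{ij}$. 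Substituting into $\partial_{X_{ij}}\ea=(\i\lambda/\pi)\ea\int_{\Omega_0}\pzz\tf(z)\,\partial_{X_{ij}}\Tr G(z)\,\dd^2z$ yields (\ref{lemma21_sample}).

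For (\ref{lemma22_sample}), the second derivative decomposes into a cross term $(\i\lambda/\pi)(\partial_{X_{ij}}\ea)\int\pzz\tf\,\partial_{X_{ij}}\Tr G\,\dd^2z$ and a term with one more derivative inside the integral. The cross term has order $\lambda^2$ times the square of the first-derivative integral; the anisotropic local law for the off-diagonal block of $R$ gives $(GY)_{ij}\prec\Psi(z)$, and a Cauchy-integral bound on $\frac{\dd}{\dd z}(GY)_{ij}$ combined with Lemma~\ref{lemma3} controls this cross term by the stated error $O_\prec((1+|\lambda|)^2/\sqrt{N\eta_0})$. For the remaining term I apply (\ref{dH_sample}) once more together with (\ref{hgh}) to get
\[
\partial_{X_{ij}}(GY)_{ij}=-\sqrt{\sigma_i}\,zG_{ii}\mathcal{G}_{jj}-\sqrt{\sigma_i}(GY)_{ij}^2.
\]
Substituting $G_{ii}=-1/(z(1+\mathfrak m\sigma_i))+O_\prec(\Psi)$ and $\mathcal{G}_{jj}=\mathfrak m+O_\prec(\Psi)$ from Theorem~\ref{fluctuation} produces the deterministic leading term $\sqrt{\sigma_i}\mathfrak m/(1+\mathfrak m\sigma_i)$, while the $(GY)_{ij}^2$ piece and the local-law remainders fold into the same error class. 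Differentiating in $z$ and re-integrating against $\pzz\tf$ gives (\ref{lemma22_sample}).

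The bounds (\ref{dke_sample}) are proved by induction on $k$: each application of (\ref{dH_sample}) to a product built from $G_{ab}$, $\mathcal{G}_{ab}$, $(GY)_{ab}$ and $(Y^*G)_{ab}$ returns a finite linear combination of products of the same type, each entry being $O_\prec(1)$ by Theorem~\ref{fluctuation} (and $(Y^*GY)_{jj}=1+z\mathcal{G}_{jj}$ for $Y$-$Y^*$ contractions), while each differentiation of $\ea$ inserts one factor of $\lambda$ times an integral that is $O_\prec(1)$ via Lemma~\ref{lemma3} and the induction hypothesis. The main obstacle is the $\lambda^2$ cross term in (\ref{lemma22_sample}): extracting the improved factor $\eta_0^{-1/2}$ rather than the naive $\eta_0^{-1}$ requires combining the stochastic bound $(GY)_{ij}\prec\Psi$ with the deterministic bound $|(GY)_{ij}|\le C\sqrt{N}/|\Im z|$ from (\ref{hg}), so that Lemma~\ref{dominant}(3) applies, and then using a Cauchy representation for $\dd/\dd z$ so that Lemma~\ref{lemma3} converts the square of $\Psi$ into the advertised $\sqrt{N\eta_0}^{-1}$.
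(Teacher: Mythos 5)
Your proof is correct and follows essentially the same route the paper takes: mimic the Wigner case (Lemma~\ref{lemma2}, proved in Appendix~B) by differentiating $\ea$ via the chain rule, using the derivation rule~(\ref{dH_sample}) together with~(\ref{hgh}) and the anisotropic local law of Theorem~\ref{fluctuation} to extract the deterministic leading parts, and applying Lemma~\ref{lemma3} (after a Cauchy-integral bound) plus the deterministic bound~(\ref{hg}) and Lemma~\ref{dominant}(3) to control the integrated remainders. One small caveat: the $\lambda^2$ cross term you highlight as the ``main obstacle'' is in fact of the smaller order $O_\prec((1+|\lambda|)^2/(N\eta_0))$ (it is a product of two first-derivative integrals, each $O_\prec(1/\sqrt{N\eta_0})$); the error $O_\prec((1+|\lambda|)^2/\sqrt{N\eta_0})$ is dictated by the local-law replacement inside the single second-derivative integral, though your stated bound remains valid since it is weaker.
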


We first look at $I_1$. Using (\ref{dH_sample}) and (\ref{lemma21_sample}), we have
	\begin{align*}
	I_1=&\frac{{\sigma_i}}{N} \sum_{j=1}^N   \E \Big[\ea (1-\E) \Big(   G_{ii} \Big)  \Big] - \frac{\sigma_i}{N} \sum_{j=1}^N  \E \Big[\ea (1-\E) \Big( (Y^*GY)_{jj} G_{ii} \Big)  \Big]\\
	&- \frac{\sigma_i}{N} \sum_{j=1}^N  \E \Big[\ea (1-\E) \Big( (GY)_{ji}(GY)_{ji} \Big)  \Big]+\frac{\sqrt{\sigma_i}}{N} \sum_{j=1}^N  \E \Big[\frac{\partial  \ea}{\partial X_{ji}} (GY)_{ji} \Big] =: A_1+A_2+A_3+A_4.
	\end{align*}
	Note that
	$$A_1=\sigma_i \E[\ea (1-\E) G_{ii} ]=\sigma_i \E[\ea (1-\E) G_{ii} ].$$
 In addition, using the definition of the resolvent and the local law in Theorem \ref{fluctuation}, we have
 $$A_2=-\frac{\sigma_i}{N}\E [\ea (1-\E)  \big( \Tr  (G H) G_{ii} \big)]=-\gamma \sigma_i \E [\ea (1-\E)  \big( z M^{-1}\Tr G G_{ii} \big)]-\gamma \sigma_i \E [\ea (1-\E)  \big(G_{ii} \big)]$$
 $$=(-\gamma \sigma_i  z m_{fc}-\gamma \sigma_i) \E [\ea (1-\E)  \big( G_{ii} \big)] +\frac{\gamma \sigma_i}{1+\mathfrak{m} \sigma_i}\E [\ea (1-\E)  \big( M^{-1} \Tr G  \big)]+O_{\prec}\big( \Psi(z) \Theta(z) \big).$$
         Next, we use the local law to estimate the third term,
         $$A_3=- \frac{1}{N}  \E \Big[\ea (1-\E) ( Y^* G^2 Y)_{ii}  \Big]=- \frac{1}{N}  \E \Big[\ea (1-\E) \frac{\dd}{\dd z} ( Y^*GY)_{ii}  \Big]=O_{\prec}\Big( \frac{\Psi(z)}{ N \eta} \Big).$$
	Finally, we study the last term $A_4$ using (\ref{lemma21_sample}), which can be written as
\begin{align*}
A_4&=-\frac{\i 2 \sigma_i \lambda}{\pi N} \E\Big[  \ea   \int_{\Omega_{0}}\frac{\partial}{\partial \overline{z_2}} \tf(z_2) \frac{\partial}{\partial z_2} ( G(z_2) H G(z_1) )_{ii}\dd^2 z_2  \Big]\\
&=-\frac{\i 2 \sigma_i \lambda}{\pi N} \E\Big[  \ea   \int_{\Omega_{0}}\frac{\partial}{\partial \overline{z_2}} \tf(z_2) \frac{\partial}{\partial z_2} (z_2 G(z_2) G(z_1))_{ii}\dd^2 z_2  \Big].
\end{align*}
Using the resolvent identity (\ref{resolvent_identity}), the local law Theorem \ref{fluctuation} and Lemma \ref{lemma3},
we obtain that
	\begin{equation}\label{a4_sample}
	A_4 = -\frac{\i 2 \lambda}{\pi N} \E\Big[  \ea   \int_{\Omega_{0}}\frac{\partial}{\partial \overline{z_2}} \tf(z_2) \frac{\partial}{\partial z_2}  \Big( \frac{ \sigma_i z_2 (\g_i(z_1)-\g_i(z_2))}{z_1-z_2} \Big)\dd^2 z_2  \Big]+e(i),
	\end{equation}	
	where $\g_i(z):=-\frac{1}{z(1+\mathfrak{m} \sigma_i)}$ for simplicity, and $e(i)$ is the error term.
	If we consider the linear statistics of the error term $e(i)$, using the same argument as for deformed Wigner matrix in Section \ref{subsection:sum}, we have
	$$\big| \sum_{i=1}^N \g_i(z) e_i(z) \big| = O_{\prec} \Big( \frac{1}{N \eta^2_1}\Big)+O_{\prec} \Big( \frac{1}{N \eta_0\eta_1}\Big).$$

As for the second cumulant expansion term $I_2$, we apply the same argument as in (\ref{estimate}) and the anisotropic law Theorem \ref{fluctuation} to find that
 $$I_2 =O_{\prec}\Big( \frac{(1+|\lambda|^2) \Psi(z)}{N \sqrt{ \eta_0}} \Big)+O_{\prec} \Big(  \frac{\Psi^2(z)}{\sqrt{N}}\Big).$$

We compute the third cumulant expansion term $I_3$ similarly using (\ref{dH_sample}), the local law Theorem \ref{fluctuation} and (\ref{dke_sample}). The leading term comes from the second term of $I_3$, denoted by $D_2$, i.e.,
$$I_3=\frac{\sigma_i}{2 N^{2}}\sum_{j=1}^N \E \Big[\frac{\partial^2  \ea}{\partial^2 X_{ij}} c^{(4)}_{ij}  \Big( G_{ii}-(Y^*GY)_{jj} G_{ii}-(GY)_{ij} (GY)_{ij}  \Big) \Big]+O_{\prec} ((1+|\lambda|^3) N^{-1}\Psi(z)).$$
Using (\ref{hgh}) and (\ref{lemma22_sample}), $D_2$ can be written as
\begin{align}\label{d2_sample}
D_2=&\frac{\sigma_i}{2 N^{2}}\sum_{j=1}^N \E \Big[\frac{\partial^2  \ea}{\partial^2 X_{ij}} c^{(4)}_{ij} \frac{\mathfrak{m}}{1+\mathfrak m \sigma_i}  \Big]+O_{\prec} \Big( (1+|\lambda|^2)N^{-1} \Psi(z) \Big)\nonumber \\ 
=&-\frac{ \i  K_4 \sigma_i^2 \lambda}{\pi N} \E \Big[  \ea \int_{\Omega_{0}} \pzzp 
\tf(z') \pzp  \Big( \frac{\mathfrak{m}(z)}{1+\mathfrak m(z) \sigma_i}  \frac{\mathfrak{m}(z')}{1+\mathfrak m(z') \sigma_i}   \Big) \dd^2 z' \Big] \\
&+O_{\prec} \Big( \frac{1+|\lambda|^2}{ N \sqrt{N \eta_0}} \Big)+O_{\prec} \Big( (1+|\lambda|^2) N^{-1}\Psi(z) \Big),\nonumber
\end{align}
where $K_4$ is given in (\ref{k_4}) and is independent of the index $i$.

	Summing up and rearranging terms in (\ref{sum_sample}), the coefficient in front of $\E [\ea (1-\E) G_{ii}]$ is given by
	$$(z-\sigma_i+\gamma \sigma_i +\gamma \sigma_i z m_{fc} )=z(1+ \mathfrak{m}(z) \sigma_i),$$
	which is away from zero for $z \in S'$. Dividing both sides of (\ref{sum_sample}) by this coefficient, we have
	\begin{equation*}
	\E [\ea ( G_{ii} -\E  G_{ii})]=\frac{\gamma \sigma_i}{M z(1+\mathfrak{m}(z) \sigma_i)^2}\E [\ea (1-\E)  \big(  \Tr G  \big)]+\frac{ A_4(i)}{z(1+ \mathfrak{m}(z) \sigma_i)}+\frac{D_2(i)}{z(1+ \mathfrak{m}(z) \sigma_i)} +\mathcal E(i),
	\end{equation*}
	where $A_4$ and $D_2$ are given in (\ref{a4_sample}) and (\ref{d2_sample}), and the error terms $\mathcal E(i)$ is analytic in $\Omega_0$ and estimated as before. Summing over $i$ and rearranging,  we have
	\begin{equation}\label{newsum_sample}
	\Big( 1-\frac{1}{M} \sum_{i=1}^M \frac{\gamma \sigma_i}{z(1+\mathfrak{m}(z) \sigma_i)^2}  \Big) \E [\ea (1-\E ) \Tr G]=\sum_{i=1}^M \frac{ A_4(i)}{z(1+ \mathfrak{m}(z) \sigma_i)}+\sum_{i=1}^M \frac{D_2(i)}{z(1+ \mathfrak{m}(z) \sigma_i)} +\mathcal E,
	\end{equation}
	where the error term $\mathcal{E}$ has an upper bound as in (\ref{erroruselater}).	
	Dividing by the coefficient $1-\frac{1}{M} \sum_{i=1}^M \frac{\gamma \sigma_i}{z(1+\mathfrak{m}(z) \sigma_i)^2}$ and recalling (\ref{coefficient}), the first two terms on the right side of (\ref{newsum_sample}) are denoted as $A$ and $D$ respectively, and the error term is bounded by (\ref{error_sample}). Using (\ref{a4_sample}) and (\ref{coefficient}), we have
	\begin{align*}
	A&=-\frac{z_1 \mr'_1}{\mr_1} \frac{\i 2 \lambda \gamma}{\pi}  \E\Big[  \ea   \int_{\Omega_{0}}\frac{\partial}{\partial \overline{z_2}} \tf(z_2) \frac{\partial}{\partial z_2}  \Big( \frac{z_2}{z_1-z_2} \frac{1}{M}\sum_{i=1}^M \sigma_i \g_i(z_1)(\g_i(z_1)-\g_i(z_2))  \Big)\dd^2 z_2  \Big]\\
	&=\frac{\i 2 \lambda}{\pi}  \E\Big[  \ea   \int_{\Omega_{0}}\frac{\partial}{\partial \overline{z_2}} \tf(z_2)  \Big( \frac{\mr_1' \mr_2'}{(\mr_1-\mr_2)^2}-\frac{1}{(z_1-z_2)^2} \Big)\dd^2 z_2  \Big],
	\end{align*}
	where we set $\mr_1:=\mr(z_1)$ and $\mr_2:=\mr(z_2)$ for short.
	This follows from
	\begin{align*}
	z_1z_2 \frac{1}{M}\sum_{i=1}^M \sigma_i  \g_i(z_1)  \g_i(z_2) &=\frac{1}{M}\sum_{i=1}^N \frac{\sigma_i}{(1+\mr_1 \sigma_i) (1+\mr_2 \sigma_i)}=\frac{z_1\mr_1-z_2 \mr_2}{\gamma(\mr_1-\mr_2)};\\
	z_1^2 \frac{1}{M}\sum_{i=1}^M \sigma_i  \g^2_i(z_1)  &=\frac{1}{M}\sum_{i=1}^N \frac{\sigma_i}{(1+\mr_1 \sigma_i)^2}=\frac{(z_1 \mr_1 )'}{\gamma \mr_1'}.
	\end{align*}
	Similarly, recalling (\ref{d2}), we obtain that
	\begin{align*}
	D&=-\frac{z_1 \mr'_1}{\mr_1} \sum_{i=1}^N \frac{ D_2(i)}{z(1+ \mathfrak{m}(z) \sigma_i)}=\frac{ \i  K_4 \gamma  \lambda}{\pi } \E \Big[  \ea \int_{\Omega_{0}} \pzbb \tf(z_2)  \frac{1}{M} \sum_{i=1}^M \Big( \sigma_i^2 \frac{\mathfrak{m}'_1}{(1+\mathfrak m_1 \sigma_i)^2}  \pzb   \frac{\mathfrak{m}_2}{1+\mathfrak m_2 \sigma_i}  \Big) \dd^2 z_2 \Big]\\
	&=\frac{ \i  K_4 \gamma  \lambda}{\pi } \E \Big[  \ea \int_{\Omega_{0}} \pzbb \tf(z_2)  \Big( \pzab \Big[ \frac{1}{M} \sum_{i=1}^M \frac{1}{(1+\mathfrak m_1 \sigma_i)(1+\mathfrak m_2 \sigma_i)}    \Big] \Big) \dd^2 z_2 \Big].
	\end{align*}
Therefore, we obtain an analogue of Lemma \ref{lemma4}, where the kernel is given  instead by (\ref{kernel_sample}).

Next, we compute the explicit formula for the variance $V(f)$ given by (\ref{vf}) with $K$ in (\ref{kernel_sample}) and test function $f$ in (\ref{fn}). Both in the bulk and at the edge, the second term of (\ref{kernel_sample}) will only contribute $O(\eta_0N^{\tau})$, because of (\ref{prime_gamma}), (\ref{1+tm}) and (\ref{assumpf}). It is sufficient to look at the first term. In the bulk, the main contribution of $V(f)$ comes from the term $-\frac{2}{(z_1-z_2)^2}$ with $z_1$, $z_2$ in different half planes. Hence by similar arguments as in Lemma \ref{vfbulk}, we obtain that
$$\lim _{N \rightarrow \infty} V(f)=\frac{1}{2  \pi^2} \int_{\R} \int_{\R} \frac{(g(x_1)-g(x_2))^2}{(x_1-x_2)^2} \dd x_1 \dd x_2.$$
At the edge, we recall the expansion of $\mr(z)$ near $z=E_+$ from (\ref{expansion_sample}), 
$$\mr(z)=\mr(E_+)+c_+ \sqrt{z-E_+}(1+A_+(\sqrt{z-E_+}))=\xi_++c_+\sqrt{z-E_+}+O(|z-E_+|),$$
where the square root is taken in a branch cut such that $\Im \mr>0$ when $\Im z>0$, and $c_+>c_0>0$ for sufficiently large $N$. Differentiating it, we have
$$\mr'(z)=\frac{c_+}{2\sqrt{z-E_+}}+d_++O(\sqrt{|z-E_+|}).$$
Therefore, repeating the arguments in the proof of Lemma \ref{bfedge}, we get the variance at the edge
$$\lim _{N \rightarrow \infty} V(f)=\frac{1}{4 \pi^2} \int_{\R}\int_{\R} \Big(\frac{g(-x_1^2)-g(-x_2^2)}{x_1-x_2} \Big)^2 \dd x_1 \dd x_2.$$

\subsection{Expectation and bias computation}

Starting from (\ref{uselater_sample}), using the cumulant expansion and (\ref{dH_sample}), we obtain that
	\begin{align}
	z \E {G}_{ii}=&\sqrt{\sigma_i} \sum_{j=1}^N  \E X_{ij} (G Y)_{ij} -1=\frac{\sqrt{\sigma_i}}{N} \E \sum_{j=1}^N c^{(2)}_{ij}  \frac{\partial (GY)_{ij} }{\partial X_{ij}}-1 \nonumber\\
	&+\frac{\sqrt{\sigma_i}}{2! N^{\frac{3}{2}}}\sum_{j=1}^N c^{(3)}_{ij} \E \frac{\partial^2 (GY)_{ij} }{\partial^2 X_{ij}} +\frac{\sqrt{\sigma_i}}{3! N^{2}}\sum_{j=1}^N c^{(4)}_{ij} \E\frac{\partial^3 (GY)_{ij} }{\partial^3 X_{ij}} +O_{\prec}(N^{-\frac{3}{2}})\nonumber\\
=&\frac{\sigma_i}{N} \sum_{j=1}^N \E G_{ii}-\frac{\sigma_i}{N} \sum_{j=1}^N \E (Y^*GY)_{jj} G_{ii}-\frac{\sigma_i}{N} \sum_{j=1}^N \E ((GY)_{ij})^2-1\label{first_line}\\
&+\frac{\sigma_i^{3/2}}{2 N^{\frac{3}{2}}}\sum_{j=1}^N c^{(3)}_{ij} \E \Big(-6 G_{ii} (GY)_{ij}+6 G_{ii}(GY)_{ij}(Y^* G Y)_{jj}+2 ((GY)_{ij})^3\Big)\label{second_line}\\
&+\frac{\sigma_i^2}{6 N^{2}}\sum_{j=1}^N c^{(4)}_{ij} \E \Big(-6(G_{ii})^2+12 (G_{ii})^2 (Y^* G Y)_{jj}-6 (G_{ii})^2((Y^*GY)_{jj})^2 \Big)+O_{\prec}(N^{-\frac{3}{2}}).\label{last_line}
\end{align}
The first line (\ref{first_line}) can be written as
\begin{align*}
&\sigma_i \E G_{ii}-\frac{\sigma_i}{N} \E [\Tr (Y^*GY) G_{ii}]-\frac{\sigma_i}{N} \E  (GH G)_{ii}-1\\
=&\sigma_i (1-\gamma)\E G_{ii}-z{\sigma_i} \gamma \E [M^{-1}\Tr G G_{ii}]+\frac{\sigma_i}{N} \Big( \frac{1}{1+\sigma_i \mr} \Big)'-1+O_{\prec}(N^{-3/2})\\
=&\sigma_i (1-\gamma) \E G_{ii}-z{\sigma_i} \gamma m_{fc} \E \Big( G_{ii}+\frac{1}{z(1+ \mr \sigma_i)} \Big)+\frac{\sigma_i\gamma}{M(1+\mr \sigma_i)}  \E \Tr G-\frac{\sigma^2_i}{N}  \frac{\mr'}{(1+\sigma_i \mr)^2}-1+O_{\prec}\Big((N \eta)^{-\frac{3}{2}}\Big).
\end{align*}
Using the anisotropic local law and the same arguments as in (\ref{estimate}), one shows that the second line (\ref{second_line}) is $O_{\prec}( N^{-1}\Psi(z)).$ The local law implies that the last line (\ref{last_line}) becomes
$$\frac{\sigma_i^2}{ N^{2}}\sum_{j=1}^N c^{(4)}_{ij} \E \Big(-(G_{ii})^2+2 (G_{ii})^2 (Y^* G Y)_{jj}- (G_{ii})^2((Y^*GY)_{jj})^2 \Big)+O_{\prec}(N^{-\frac{3}{2}})$$
$$=-\frac{\sigma_i^2}{ N^{2}}\sum_{j=1}^N c^{(4)}_{ij}  \Big(\frac{\mr^2}{(1+\mr \sigma_i)^2} \Big)+O_{\prec}(N^{-\frac{3}{2}})=-K_4\frac{\sigma_i^2 }{ N}  \Big(\frac{\mr^2}{(1+\mr \sigma_i)^2} \Big)+O_{\prec}(N^{-\frac{3}{2}}).$$
Therefore, we have
\begin{align*}
z(1+ \mathfrak{m} \sigma_i) \Big( \E G_{ii}+\frac{1}{z(1+\mr \sigma_i)} \Big)=\frac{\sigma_i\gamma}{1+\mr \sigma_i}  \E [M^{-1}\Tr G-m_{fc}]-\frac{1}{M}  \frac{ \gamma \sigma^2_i \mr'}{(1+\sigma_i \mr)^2} \\
-K_4\frac{1 }{ M}  \frac{\gamma \sigma_i^2 \mr^2}{(1+\mr \sigma_i)^2}+O_{\prec}\Big((N \eta)^{-\frac{3}{2}}\Big).
\end{align*}
Dividing both sides by $z(1+ \mathfrak{m} \sigma_i) \sim O(1)$ from (\ref{1+tm}) and summing over $i$, we obtain
$$\Big(1-\frac{1}{M} \sum_{i=1}^M \frac{\gamma \sigma_i}{z(1+\mathfrak{m}(z) \sigma_i)^2} \Big)\E (\Tr G -M m_{fc})=  -  \frac{1}{M} \sum_{i=1}^{M} \frac{\gamma \sigma_i^2 \mr' }{z(1+\mr \sigma_i)^3}-K_4  \frac{1}{M}\sum_{i=1}^M \frac{\gamma \sigma_i^2 \mr^2}{z(1+\mr \sigma_i)^3} +O_{\prec}\Big(\frac{1}{\sqrt{N \eta^3}}\Big).$$
Dividing both sides by the coefficient of $\E (\Tr G -M m_{fc})$ and using (\ref{coefficient}), the error becomes $O_{\prec}\Big( \frac{1}{\eta\sqrt{N \eta \sqrt{\kappa+\eta}}}\Big)$ and the leading term on the right side becomes (\ref{bz_sample}).
Therefore, we obtained an analogue of Proposition~\ref{prop2} with the integral kernel $b(z)$ given instead by (\ref{bz_sample}). 

Finally, we compute the explicit formula for the bias. Using (\ref{prime_gamma}), (\ref{1+tm}) and (\ref{assumpf}), the second term of $b(z)$ will contribute $O(\sqrt{\eta_0 N^{\tau}})$ both in the bulk and at the edge. In addition, (\ref{mplawgamma-1}) implies that the first term of $b(z)$ can be written as
$$\frac{(\mr')^2}{\mr}\frac{1}{M} \sum_{i=1}^M \frac{\gamma \sigma_i^2 }{(1+\sigma_i \mr)^3}=\frac{ \mr''}{2 \mr'}-\frac{\mr'}{\mr}.$$
The second term on the right side contributes $O(\sqrt{\eta_0 N^{\tau}})$. The first term vanishes if $\kappa_0>c>0$ and thus the bias in the bulk vanishes. At the edge, we use the expansion of $\mr(z)$ around $E_+$ from (\ref{expansion_sample}), 
$$\mr'(z)=\frac{c_+}{2\sqrt{z-E_+}}+d_++O(\sqrt{|z-E_+|}), \qquad \mr''(z)=-\frac{c_+}{4(\sqrt{z-E_+})^3}+O\Big(\frac{1}{\sqrt{|z-E_+|}}\Big).$$
Hence we have
$$\frac{ \mr''}{2 \mr'}=- \frac{1}{4(z-E_+)}+O\Big(\frac{1}{\sqrt{|z-E_+|}}\Big),$$
and using the Sokhotski-Plemelj lemma, the bias at the edge becomes $\frac{g(0)}{4}$.

\appendix

\section{Complex case}\label{sec:complex}

In this appendix, we extend previous results from real symmetric to complex Hermitian matrices. We will use the complex analogue of Lemma \ref{cumulant}. 
\begin{lemma}(Complex cumulant expansion)
	Let $h$ be a complex-valued random variable with finite moments, and $f$ is a complex-valued smooth function on $\R$ with bounded derivatives. Let $c_{p,q}$ be the $(p,q)$ cumulant of $h$, which is defined as 
	$$c_{p,q}:=(-\ii)^{p+q} \Big( \frac{\partial^{p+q}}{\partial s^p \partial t^q} \log \E e^{\i s h+\i t \overline{h}} \Big) \Big|_{s,t=0}.$$ 
	Then for any fixed $l \in \N$, we have
	$$\E[ \bar{h} f(h, \bar{h})]=\sum_{p+q=0}^l \frac{1}{p!q!} c_{p,q+1}(h)\E[ f^{(p,q)}(h)] +R_{l+1},$$
	where the error term satisfies
	$$|R_{l+1}| \leq C_l \E |h|^{l+2} \max_{p+q=l+1} \Big\{ \sup_{|x| \leq M} |f^{(p,q)}(z,\bar{z})| \Big\}+C_l \E \Big[ |h|^{l+2} 1_{|h|>M}\Big] \max_{p+q=l+1} \|f^{(p,q)}(z,\bar{z})\|_{\infty},$$
	and $M>0$ is an arbitrary fixed cutoff.
\end{lemma}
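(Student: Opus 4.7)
The plan is to mirror the Fourier/characteristic-function derivation behind Lemma \ref{cumulant}, now applied to the bivariate characteristic function $\phi(\xi,\eta) \deq \E[\e{\ii\xi h + \ii\eta \bar h}]$ of the pair $(h,\bar h)$. Writing $K \deq \log\phi$ for the cumulant generating function, the definition of $c_{p,q}(h)$ via the mixed $\xi$--$\eta$ Taylor expansion of $K$ will supply exactly the coefficients appearing on the right-hand side of the claim.

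The starting point is the identity $\bar h\,\e{\ii\xi h + \ii\eta \bar h} = -\ii\,\partial_\eta \e{\ii\xi h + \ii\eta \bar h}$, which after taking expectations gives
\begin{equation*}
\E\bigl[\bar h\,\e{\ii\xi h + \ii\eta \bar h}\bigr] \;=\; -\ii\,\partial_\eta\phi(\xi,\eta) \;=\; \phi(\xi,\eta)\cdot\bigl(-\ii\,\partial_\eta K(\xi,\eta)\bigr).
\end{equation*}
Differentiating the defining series of $K$ term by term, one gets
\begin{equation*}
-\ii\,\partial_\eta K(\xi,\eta) \;=\; \sum_{p,q\geq 0} \frac{(\ii\xi)^p(\ii\eta)^q}{p!\,q!}\, c_{p,q+1}(h),
\end{equation*}
which is already the generating series matching the desired right-hand side. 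I would then combine this with the Fourier representation $f(h,\bar h) = (2\pi)^{-2}\int\!\int \hat f(\xi,\eta)\,\e{\ii\xi h + \ii\eta\bar h}\,\dd\xi\,\dd\eta$ (valid after a smooth cutoff and a standard approximation argument). Multiplying by $\bar h$, taking expectations, and inserting the above formula, the Fourier correspondence $(\ii\xi)^p(\ii\eta)^q \leftrightarrow \partial_h^p\partial_{\bar h}^q$ converts the generating series into the claimed sum $\sum_{p+q=0}^{l}\tfrac{c_{p,q+1}(h)}{p!\,q!}\,\E[\partial_h^p\partial_{\bar h}^q f(h,\bar h)]$.

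The remainder $R_{l+1}$ collects everything past degree $l$ in the Taylor expansion of $-\ii\partial_\eta K$ integrated against $\phi$ and $\hat f$. To bound it I would repeat the estimate of Lemma \ref{cumulant}: use the integral form of Taylor's remainder in $(\xi,\eta)$, return the extra factors $(\ii\xi)^p(\ii\eta)^q$ to $f$ via Fourier inversion, and split $\E|h|^{l+2} = \E[|h|^{l+2}\mathbf{1}_{|h|\leq M}] + \E[|h|^{l+2}\mathbf{1}_{|h|>M}]$, estimating the first piece against $\sup_{|z|\leq M}|\partial_h^p\partial_{\bar h}^q f|$ and the second against the global sup-norm of the same derivatives, over all $(p,q)$ with $p+q=l+1$.

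The main obstacle is purely bookkeeping: making sure that the Wirtinger derivatives $\partial_h,\partial_{\bar h}$ on the right-hand side are consistent with the variables $\ii\xi,\ii\eta$ dual to them in the Fourier representation, and that no spurious conjugate or sign creeps in. Because $h$ and $\bar h$ are not independent complex variables, one treats $f(h,\bar h)$ as a smooth function of the real pair $(\re h,\im h)$ with $\partial_h,\partial_{\bar h}$ defined through $\partial_X,\partial_Y$ in the usual way; equivalently, the statement can be cross-checked by decomposing $h = X + \ii Y$ and applying the bivariate real cumulant expansion to $\E[XF] - \ii\E[YF]$ with $F(x,y) \deq f(x+\ii y,x-\ii y)$, regrouping the resulting double sum by $p+q$. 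Once this identification is fixed at the generating-function level, the coefficient matching is automatic and the remainder estimate transcribes verbatim from Lemma \ref{cumulant}.
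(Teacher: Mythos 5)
The paper does not actually prove this lemma: both the real cumulant expansion (Lemma~\ref{cumulant}) and its complex analogue are stated without proof, the real one with a pointer to Lemma~3.1 of~\cite{moment}. So there is no in-paper argument to compare against; I can only assess your proposal on its own terms, and it is correct. The generating-function identity you write down checks out: differentiating $K(\xi,\eta)=\log\E\big[\e{\ii\xi h+\ii\eta\bar h}\big]$ term-by-term gives $-\ii\,\partial_\eta K(\xi,\eta)=\sum_{p,q\ge0}\tfrac{(\ii\xi)^p(\ii\eta)^q}{p!\,q!}\,c_{p,q+1}(h)$, and the Fourier dictionary $\ii\xi\leftrightarrow\partial_h$, $\ii\eta\leftrightarrow\partial_{\bar h}$ is exactly the change of dual variables $s=\xi+\eta$, $t=\ii(\xi-\eta)$ combined with $\partial_h=\tfrac12(\partial_X-\ii\,\partial_Y)$, $\partial_{\bar h}=\tfrac12(\partial_X+\ii\,\partial_Y)$, so the coefficient matching is forced once that dictionary is in place. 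Two remarks. First, since $f$ is only assumed smooth with bounded derivatives, its Fourier transform is a tempered distribution, not an integrable function; the ``smooth cutoff and standard approximation argument'' you mention in passing is precisely the step that turns the formal Fourier computation into a proof and should be carried out explicitly (truncate $f$, prove the identity for the truncation, then let the cutoff go to infinity, controlling the error by the sup-norm bounds on $f^{(p,q)}$). Second, your fallback route is, in my view, the cleaner one to write down as a self-contained proof: decompose $h=X+\ii Y$, apply the real bivariate cumulant expansion (itself an easy iteration of the univariate Lemma~\ref{cumulant}) to $\E[XF]-\ii\,\E[YF]$ with $F(x,y)=f(x+\ii y,x-\ii y)$, and regroup by total order $p+q$; the $c_{p,q}$ then appear as the linear combinations of joint real cumulants of $(X,Y)$ dictated by this change of variables, and the $|h|\le M$ versus $|h|>M$ split transfers the remainder bound verbatim from the real case, avoiding any Fourier-integrability issue.
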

Instead of (\ref{dH}), we have
\begin{equation}\label{partial}
\frac{\partial G_{ij}}{\partial H_{ab}}=-G_{ia}G_{bj},
\end{equation}
from which we obtain the analogue of Lemma \ref{lemma2}.

The assumption $\E H_{ij}^2=0$ implies that $c_{ij}^{(1,1)}=1$, $c_{ij}^{(2,2)}=W_4-2$ for $i \neq j$.  Using the anisotropic law and (\ref{partial}), one shows similarly that the expansion terms corresponding to $p+q=3$ are negligible. Using (\ref{partial}) and the analogue of Lemma \ref{lemma2}, we obtain that
         \begin{align*}
         (z-a_i) \E [\ea(G_{ii}-\E G_{ii})]=&\frac{1}{N}\sum_{j=1}^N c^{(1,1)}_{ij} \E \Big[\frac{\partial }{\partial H_{ji}} \Big( \ea (G_{ji}-\E G_{ji}) \Big) \Big] \\
&+\frac{1}{2! N^{2}}\sum_{j=1}^N c^{(2,2)}_{ij} \E \Big[\frac{\partial^3 }{\partial^2 H_{ji}\partial H_{ij}} \Big( \ea (G_{ji}-\E G_{ji}) \Big) \Big] +\cdots\\
=&\frac{1}{N} \E \Big[\ea \Big( \frac{\partial G_{ji}}{\partial H_{ji}}- \E \frac{\partial G_{ji}}{\partial H_{ji}} \Big) \Big]+\frac{1}{N} \sum_{j=1}^N  \E\Big[ \frac{\partial \ea}{ \partial H_{ji}} G_{ji}\Big]+\frac{m_2-1}{N} \E\Big[ \frac{\partial \ea}{ \partial H_{ii}} G_{ii}\Big]\\
&+\frac{1}{N^2} \sum_{j=1}^N (W_4-2) \E \Big[ \frac{\partial \ea}{\partial H_{ji} \partial H_{ij}} \frac{\partial G_{ji}}{\partial H_{ji}} \Big]+\cdots.
         \end{align*}
Thus Proposition \ref{prop} holds with modified variance, i.e. $m_2-2$ be replaced by $m_2-1$, $W_4-3$ be replaced by $W_4-2$, and the coefficient of the remaining term be 1 instead of 2. 
Similarly, as for the expectation, 
$$(z-a_i) \E G_{ii} =\E (HG)_{ii}-1=\frac{1}{N}\sum_{j=1}^N c^{(1,1)}_{ij} \E \frac{\partial G_{ji} }{\partial H_{ji}}-1 +\frac{1}{2! N^{2}}\sum_{j=1}^N c^{(2,2)}_{ij} \E  \frac{\partial^3 G_{ji} }{\partial^2 H_{ji}\partial H_{ij}} +\cdots.$$
Thus the first term of $b(z)$ given in (\ref{bz}) vanishes, $m_2-2$ is replaced by $m_2-1$ and $W_4-3$ is replaced by $W_4-2$.

\section{Proofs of Auxiliary Lemmas}\label{appendix B}

\begin{proof}[Proof of Lemma \ref{lemma3}]
         For $1 \leq s \leq 2$, the proof is given in Lemma 4.4 in \cite{character}. Since $h(z)$ is holomorphic on $\Omega_0$, $\pzz \tf(z) h(z)=\pzz(\tf(z) h(z))$. Using Stokes' formula, we have
	$$\int_{\Omega_{0}} \pzz \tf(z) h(z) \dd^2z =-\frac{\i}{2}\int_{\partial \Omega_{0}}  \tf(z) h(z) \dd z.$$
	Since $g$ is compactly support, $\tf(z)=0$ on $\partial \Omega_0$ except $$\Gamma_0:=\Big\{ x+\i y: x \in \mbox{supp}(f),  |y|=N^{-\tau} \eta_0 \Big\}.$$ Using (\ref{assumpf}) we have
	$$\Big| \int_{\Omega_{0}} \pzz \tf(z) h(z) \dd^2z \Big| \leq C K \int_{\Gamma_0} \Big(  |y |^{-s} |f(x)|+ |y|^{1-s} |f'(x)|  \Big) \dd z \leq C' K N^{\tau s} \eta^{1-s}_{0} .$$	
\end{proof}

\begin{proof}[Proof of Lemma \ref{Izz}]
	Using the self-consistent equation of $m_{fc}$ in (\ref{self}), we have
	         \begin{align*}
	         m_{fc}(z_1)-m_{fc}(z_2)&=\frac{1}{N} \sum_{i=1}^N \Big( \frac{1}{a_i-z_1-m_{fc}(z_1)}-\frac{1}{a_i-z_2-m_{fc}(z_2)} \Big)\\
	&=\frac{1}{N} \sum_{i=1}^N \Big( \frac{z_1+m_{fc}(z_1)-z_2-m_{fc}(z_2)}{(a_i-z_1-m_{fc}(z_1))(a_i-z_2-m_{fc}(z_2))} \Big).
	         \end{align*}
	Dividing $z_1+m_{fc}(z_1)-z_2-m_{fc}(z_2)$ on both sides and we get the first identity.
	Taking the derivative of (\ref{self}), we have
	\begin{equation}\label{self3}
	\frac{1}{N} \sum_{i=1}^N \frac{1+m'_{fc}(z)}{(a_i-z-m_{fc})^2}=m'_{fc}(z).
	\end{equation}
	 We treat $\tilde{I}$ and $\tilde{I}_s$ similarly. Thus we complete the proof.
\end{proof}

\begin{proof}[Proof of Lemma \ref{m}]
Note that
	$$|I_s(z)| \leq \frac{1}{N} \sum_{i=1}^N \frac{1}{|a_i-z-m_{fc}(z)|^2} =\frac{{\Im}m_{fc}(z)}{{\Im}m_{fc}(z)+\eta} < 1.$$
	
	By (\ref{self3}), we have $\frac{m'_{fc}}{1+m'_{fc}}=I_s(z)$ and thus $m'_{fc}(z)=\frac{I_s(z)}{1-I_s(z)}.$ Using  Lemma \ref{previous}, we have
	\begin{equation}\label{aaa}
	|m'_{fc}(z)| \leq \frac{1}{|1-I_s(z)|} \sim \frac{1}{\sqrt{\kappa+\eta}}.
	\end{equation}
	Differentiating (\ref{self3}) again, we obtain that
	$$\frac{m''_{fc}}{(1+m'_{fc})^3}=\frac{2}{N} \sum_{i=1}^N \frac{1}{(a_i-z-m_{fc})^3}.$$
	Combining (\ref{2}) and (\ref{aaa}),  we get the upper bound of $m''_{fc}$. The rest inequalities follow directly from Lemma \ref{previous}
\end{proof}

\begin{proof}[Proof of Lemma \ref{lemma2}]
	Using (\ref{dH}), we have
	$$\frac{\partial \ea }{\partial H_{ij}}= \frac{\i \lambda}{\pi} \ea   \int_{\Omega_{0}}\pzz \tf(z) \Big(\sum_{l=1}^N \frac{\partial G_{ll}}{\partial H_{ij}} \Big) \dd^2 z=-\frac{\i (2-\delta_{ij})\lambda}{\pi}  \ea   \int_{\Omega_{0}}\pzz \tf(z) (G^2)_{ji} \dd^2 z.$$
	Note that $(G^2)_{ji}=\frac{d}{dz}G_{ji}(z)$. Since $G_{ij}$ is analytic in $D'$, using the Cauchy integral formula and the local law, we have that for $i \neq j$,  $(G^2)_{ji} \prec \frac{\Psi(z)}{{\Im}z}$. Combining with Lemma~\ref{lemma3}, we obtain that, for $i \neq j$,
	$$\Big|\frac{\partial \ea }{\partial H_{ij}}\Big|=O_{\prec} \Big( \frac{1+|\lambda|}{\sqrt{N \eta_0}}\Big).$$
	Similarly, if $i=j$, we have 
	$$\frac{\partial \ea}{\partial H_{ii}}  = -\frac{\i \lambda}{\pi}  \ea  \int_{\Omega_{0}}\pzz \tf(z) \pz \frac{1}{a_i-z-m_{fc}(z)} \dd^2 z+O_{\prec} \Big(\frac{1+|\lambda|}{\sqrt{N \eta_0}}\Big).$$
	Furthermore, we compute that
	         \begin{align*}
	         \frac{\partial^2 \ea}{\partial^2 H_{ij}}=&-\frac{\lambda^2 (2-\delta_{ij})^2}{\pi^2} \ea \Big( \int_{\Omega_{0}}\pzz \tf(z) (G^2)_{ji} \dd^2 z  \Big)^2\\
	&+\frac{\i (2-\delta_{ij}) \lambda}{\pi} \ea  \int_{\Omega_{0}}\pzz \tf(z) \Big(2 (G^2)_{ji} G_{ij}+ (1-\delta_{ij})(G^2)_{ii} G_{jj}+(1-\delta_{ij})(G^2)_{jj} G_{ii} \Big) \dd^2 z.
	       \end{align*}
	For $i \neq j$, combining the local law and Lemma \ref{lemma3}, we have
	\begin{align*}
	\frac{\partial^2 \ea}{\partial^2 H_{ij}}&=\frac{\i 2 \lambda}{\pi} \ea  \int_{\Omega_{0}}\pzz \tf(z) \frac{\dd}{\dd z} \Big(G_{ji} G_{ij}+ G_{ii} G_{jj} \Big) \dd^2 z+O_{\prec} \Big( \frac{(1+|\lambda|)^2}{{N \eta_0}}\Big)\\
	&=\frac{\i 2 \lambda}{\pi} \ea  \int_{\Omega_{0}}\pzz \tf(z) \pz  \frac{1}{(a_i-z-m_{fc})(a_j-z-m_{fc})}  \dd^2 z+O_{\prec} \Big( \frac{(1+|\lambda|)^2}{\sqrt{N \eta_0}}\Big).
	\end{align*}
	Similarly, for $i=j$, we have
	$$\frac{\partial^2 \ea}{\partial^2 H_{ii}}=\frac{\i \lambda}{\pi} \ea  \int_{\Omega_{0}}\pzz \tf(z) \pz  \frac{1}{(a_i-z-m_{fc})^2}  \dd^2 z+O_{\prec} \Big( \frac{(1+|\lambda|)^2}{\sqrt{N \eta_0}}\Big).$$
	In general, using the local law, (\ref{dH}) and Lemma \ref{lemma3}, we complete the proof of (\ref{dke}).	
\end{proof}

\begin{proof}[Proof of Lemma \ref{sample_m}]
We start by proving the first two statements, using which we will show (\ref{coefficient}). The last statement then follows directly from (\ref{coefficient}). Note that the first equation in~(\ref{mplawgamma-11}) implies the first inequality in (\ref{1+tm}). To prove the rest, we divide the spectral domain $S$ into three regimes, corresponding to the bulk, the edge and the outside.
First, we consider $z$ near the edge $E_+$, i.e., $z \in S^{e}:=\{ z=E+\ii \eta \in S: E \in [E_+-\tau', E_+ +\tau']\}$ for some small $\tau'>0$. Let $\xi=-\mr(z)$ so that (\ref{mplawgamma-1}) is equivalent to 
$$z=F(\xi), \qquad F(\xi):=\frac{1}{\xi} + \gamma \int_{\R} \frac{t }{1-t \xi} \dd \mu_\Sigma(t).$$ 
Due to (\ref{gap_sample}), $F(\xi)$ is analytic around $\xi_+$, and we have
$$F(\xi)=F(\xi_+)+F'(\xi_+)(\xi-\xi_+)+\frac{1}{2!}F''(\xi_+)(\xi-\xi_+)^2+O(|\xi-\xi_+|^3),$$
where the linear term vanishes because of (\ref{solutionedge_sample}). It also yields
$$ F''(\xi_+)=\frac{2}{\xi_+^3}+ 2 \gamma \int \frac{t^3}{(1-t \xi_+)^3} \dd \mu_\Sigma(t)=2 \gamma \int \frac{t^2}{\xi_+(1-t \xi_+)^3} \dd \mu_\Sigma(t) \geq c>0.$$
The last step follows from the fact that $\xi_+ \geq c>0$ and $1-t\xi_+ \geq c>0$ for sufficiently large $N$. Thus we have the expansion of $\mr$ near the edge $z=E_+$, 
\begin{equation}\label{expansion_sample}
\mr(z)=\mr(E_+)+c_+ \sqrt{z-E_+}(1+A_+(\sqrt{z-E_+}))=\xi_++c_+\sqrt{z-E_+}+O(|z-E_+|),
\end{equation}
where $c_+>c_0>0$ for sufficiently large $N$, $A_+$ is an analytic function with $A_+(0)=0$, and the square root is taken with the branch cut such that $\Im \mr(z)>0$ when $\Im z>0$. Hence the corresponding density has the square root behavior at the right edge. The left edge can be treated similarly when $\gamma_0 \neq 1$. Using the definition of Stieltjes transform, one shows (\ref{im_m_fc}). Similar arguments can be found in Lemma A.5~\cite{locallaw}. 

 If $E$ is inside the bulk, i.e., $z \in S^{b}:=\{ z \in S: E \in [E_-+\tau', E_+ -\tau']\}$, then $\Im \mr \geq c>0$. Thus (\ref{1+tm}) and (\ref{im_m_fc}) follows.  Finally, for the outside spectral domain, if  $z \in S^{o}:=\{ z \in S: \mbox{dist}(E, [E_-, E_+]) \geq \tau'\}$, it follows from $\Im \mr \sim \eta$ and (\ref{gap_sample}). Similar arguments can be found in Appendix A in \cite{isotropic}. 

Next, we will prove (\ref{coefficient}). We first prove the upper bound. Taking the real and imaginary part of (\ref{mplawgamma-1}), we have
         \begin{equation}\label{useful_sample}
          E \Im \mr+ \eta \Re \mr=\frac{\gamma}{M} \sum_{i=1}^M \frac{ \sigma_i \Im \mathfrak{m}}{|1+\mathfrak{m} \sigma_i|^2}; \qquad E \Re \mr- \eta \Im \mr=-\frac{\gamma}{M} \sum_{i=1}^M \frac{ 1+\sigma_i \Re \mathfrak{m}}{|1+\mathfrak{m} \sigma_i|^2}+\gamma-1,
          \end{equation}
        with $\mr \equiv \mr(z)$. Then we have
         $$\Big| z-\frac{\gamma}{M} \sum_{i=1}^M \frac{ \sigma_i}{|1+\mathfrak{m}(z) \sigma_i|^2} \Big|=\Big| {\i \eta-\frac{\Re \mr}{\Im \mr} \eta} \Big| =\frac{|\mr| \eta}{\Im \mr}.$$
         Using $|\mr(z)| \sim 1$ and (\ref{im_m_fc}). we obtain an upper bound of the right side as $C\sqrt{\kappa+\eta}$.
        In addition,
        \begin{align*}
        &\qquad \Big| \frac{1}{M} \sum_{i=1}^M \frac{ \sigma_i}{|1+\mathfrak{m}(z) \sigma_i|^2}-\frac{1}{M} \sum_{i=1}^M \frac{ \sigma_i}{(1+\mathfrak{m}(z) \sigma_i)^2} \Big|\\
        &=2 \Big|\frac{1}{M} \sum_{i=1}^M \frac{ \sigma_i (\Im (1+\mathfrak{m}(z) \sigma_i))^2+\ii \sigma_i \Re (1+\mathfrak{m}(z) \sigma_i) \Im (1+\mathfrak{m}(z) \sigma_i)}{|1+\mathfrak{m}(z) \sigma_i|^4} \Big|  \leq  C \sqrt{\kappa+\eta},
        \end{align*}
        hence we obtain an upper bound for the left side of (\ref{coefficient}). Next, it is sufficient to show the lower bound. 
         If $z \in S^{e}$, (\ref{gap_sample}) implies that $\Re(1+\sigma_i \mr)>c$ if we choose $\tau'$ sufficiently small. We split into two cases. If $E \in [E_-, E_+]$, we have
         $$\Big|z-\frac{1}{M} \sum_{i=1}^M \frac{\gamma \sigma_i}{(1+\mathfrak{m}(z) \sigma_i)^2} \Big| \geq  \Big| \Im \Big( z- \frac{1}{M} \sum_{i=1}^M \frac{\gamma \sigma_i}{(1+\mathfrak{m}(z) \sigma_i)^2} \Big) \Big| \geq   \Big| \eta+ \frac{2\gamma}{M} \sum_{i=1}^M \frac{ \sigma^2_i \Im \mr \Re  (1+\sigma_i \mr)}{|(1+\mathfrak{m}(z) \sigma_i)|^4} \Big| $$
         \begin{equation}\label{repeat}
         \geq  \frac{2\gamma}{M} \sum_{i=1}^M \frac{\sigma_i  \Im  \mr\Re  (1+\sigma_i \mr)}{|(1+\mathfrak{m}(z) \sigma_i)|^4} \geq C \sqrt{\kappa+\eta}.
         \end{equation}
         Otherwise, $E \in [E_-, E_+]^c$, we have
         \begin{align*}
         \Big| z-\frac{\gamma}{M} \sum_{i=1}^M \frac{ \sigma_i}{(1+\mathfrak{m}(z) \sigma_i)^2} \Big| & \geq \Big| |z|-\frac{\gamma}{M} \sum_{i=1}^M \frac{ \sigma_i}{|1+\mathfrak{m}(z) \sigma_i|^2} \Big|=\Big| \sqrt{E^2+\eta^2}-E -\frac{\Re \mr}{\Im \mr} \eta \Big|\\
         & \geq \frac{|\Re \mr|}{\Im \mr} \eta -\eta \geq C \sqrt{\kappa+\eta}-\eta \geq C' \sqrt{\kappa+\eta},
         \end{align*}
         if we choose $\tau'$ sufficiently small. The last second step follows from the fact that near the edge, $|\Re \mr| \geq C>0$, because of the second equation of (\ref{useful_sample}).
Next, if $z \in S^b$, then we have $\Im \mr >c$. We also split into two cases. If $\Re \mr > 0$, we repeat (\ref{repeat}) to get (\ref{coefficient}). If $\Re \mr \leq 0$, from (\ref{useful_sample}) we have
$$\frac{\gamma}{M} \sum_{i=1}^M \frac{ \sigma_i }{|1+\mathfrak{m} \sigma_i|^2}=E + \eta \frac{\Re \mr}{\Im \mr} \leq E.$$ 
In addition, we have
$$\Re \frac{1}{M} \sum_{i=1}^M \frac{\gamma \sigma_i}{(1+\mathfrak{m}(z) \sigma_i)^2} =\frac{\gamma}{M} \sum_{i=1}^M \frac{ \sigma_i [\Re (1+\mathfrak{m}(z) \sigma_i)^2-\Im (1+\mathfrak{m}(z) \sigma_i)]^2] }{|1+\mathfrak{m}(z) \sigma_i|^4} \leq E -C . $$
Therefore, we have
          \begin{align*}
          \Big|z-\frac{1}{M} \sum_{i=1}^M \frac{\gamma \sigma_i}{(1+\mathfrak{m}(z) \sigma_i)^2} \Big| \geq   E- \Re \Big( \frac{\gamma}{M} \sum_{i=1}^M \frac{ \sigma_i}{(1+\mathfrak{m}(z) \sigma_i)^2} \Big) \geq C \geq C\sqrt{\kappa+\eta}.
	  \end{align*}
Finally, taking the derivative of (\ref{mplawgamma-1}), we obtain that
	$$(z \mr)'=\mr+z \mr'=\frac{1}{M} \sum_{i=1}^M \frac{\gamma \sigma_i \mr'}{(1+ \sigma_i \mr)^2}.$$
	Hence, we finish the proof of (\ref{coefficient}), which directly implies (\ref{prime_gamma}).
\end{proof}


\begin{thebibliography}{00}

 	\bibitem{huang}  A. Adhikari, J. Huang. Dyson Brownian Motion for General $\beta$ and Potential at the Edge, preprint, arXiv:1810.08308 (2018).

\bibitem{Ak} N. I. Akhiezer. {\it The classical moment problem: and some related questions in analysis}, Hafner Publishing Co., New York (1965).

 	\bibitem{isotropic3}  J. Alt, L. Erd\H{o}s, T. Kr\"uger and D. Schr\"oder. Correlated Random Matrices: Band Rigidity and Edge Universality, {\it Annals of Probability} {\bf 48}(2), 963-1001 (2020).
	
	\bibitem{baisample} Z. D. Bai, J. W. Silverstein. CLT for linear spectral statistics of large-dimensional sample covariance matrices, {\it Ann. Probab.} {\bf 32}(1A), pp 553-605 (2004).
	
	\bibitem{baibook} Z. D. Bai, J. W. Silverstein. {\it Spectral Analysis of Large Dimensional Random Matrices. Mathematics Monograph Series} {\bf 2}, Beijing: Science Press (2006).

	\bibitem{baisample2} Z. D. Bai, X. Wang and W. Zhou. Functional CLT for sample covariance matrices, {\it Bernoulli} {\bf 16}(4), pp 1086-1113 (2010).
				
 	\bibitem{baiwigner} Z. D. Bai, J. F. Yao. On the convergence of the spectral empirical process of Wigner matrices, {\it Bernoulli} {\bf 11}(6), 1059-1092 (2005).	
		
	\bibitem{Bao+Pan+Zhou} Z. Bao, G. Pan and W. Zhou. Universality for the Largest Eigenvalue of Sample Covariance Matrices with General Population, {\it Ann. Statist.} {\bf 43}(1), 382-421 (2015).
 	
 	\bibitem{basor+widom} E. L. Basor, H. Widom. Determinants of Airy Operators and Applications to Random Matrices, {\it J. Stat. Phys.}  {\bf 96}, 1-20 (1999).
		
 	
 	\bibitem{Bekerman+lodhia} F. Bekerman, A. Lodhia. Mesoscopic central limit theorem for general $\beta$-ensembles, {\it Ann. Inst. H. Poincare Probab. Statist.}  {\bf 54}, 1917-1938 (2018).	
 	
	 \bibitem{quantum}  L. Benigni. Eigenvectors distribution and quantum unique ergodicity for deformed Wigner matrices, preprint, arXiv:1711.07103 (2018).
		
 	\bibitem{free2} P. Biane. On the Free Convolution with a Semi-circular Distribution, {\it Indiana Univ. Math. J.} {\bf 46}, 705-718 (1997).
	
	\bibitem{Bia98} P. Biane. {Processes with free increments}, {\it Math. Z.}, {\bf 227}(1), 143-174 (1998).
	
        \bibitem{Alex+Erdos+Knowles+Yau+Yin}  A. Bloemendal, L. Erd\H{o}s, A. Knowles, H.T. Yau and J. Yin. Isotropic local laws for sample covariance and generalized Wigner matrices, {\it Electron. J. Probab.} {\bf 19}(33), 53pp (2014).	
        
        \bibitem{Alex+Knowles+Yau+Yin}  A. Bloemendal,  A. Knowles, H.T. Yau and J. Yin. On the principal components of sample covariance matrices, {\it Probab. Theo. and Relat. Field.} {\bf 164}(1-2), 459-551 (2016).       
	
 	\bibitem{Bourgadegap} P. Bourgade. Extreme gaps between eigenvalues of Wigner matrices, preprint arXiv:1812.10376, (2018).
 	
 	\bibitem{BEYY} P. Bourgade, L. Erd\H{o}s, H.-T. Yau and J. Yin. Fixed energy universality for generalized Wigner matrices, {\it Comm. Pure Appl. Math.} {\bf 69}(10), 1815-1881 (2016).
		
          \bibitem{logarithm} P. Bourgade, K. Mody. Gaussian fluctuations of the determinant of Wigner matrices, {\it Electron. J. Probab.} {\bf 24}(96), 28 pp (2019).
		
		\bibitem{meso1} A. Boutet de Monvel, A. Khorunzhy. Asymptotic distribution of smoothed eigenvalue density. I. Gaussian random matrices, {\it Random Oper. and Stoch. Equ.} {\bf 7}(1), pp 1-22 (1999).
	
 	\bibitem{meso2} A. Boutet de Monvel, A. Khorunzhy. Asymptotic distribution of smoothed eigenvalue density. II. Wigner random matrices, {\it Random Oper. and Stoch. Equ.} {\bf 7}(2), pp 149-168 (1999).

 	
 	\bibitem{Breuer+Duits} J. Breuer, M. Duits. Universality of mesoscopic fluctuations for orthogonal polynomial ensembles, {\it Comm. Math. Phys.} {\bf 342} (2), 491-531 (2016).
 	
 	\bibitem{free} M. Capitaine, C. Donati-Martin, D. Feral and M. Fevrier. Free Convolution with a Semicircular Distribution and Eigenvalues of Spiked Deformations of Wigner Matrices, {\it Electron. J. Probab.} {\bf 16}(64), 1750-1792 (2011).

	
		\bibitem{Cusp2} G. Cipolloni, L. Erd\H{o}s, T. Kr\"uger and D. Schr\"oder. Cusp universality for random matrices II: the real symmetric case, {\it Pure and App. Analy.} {\bf 1}(4), 615-707 (2019).
		
			\bibitem{global2} S. Dallaporta, M. Fevrier. Fluctuations Of Linear Spectral Statistics Of Deformed Wigner Matrices. Preprint, arXiv:1903.11324 (2019).
 	

	 	\bibitem{Duits+Johansson}  M. Duits, K. Johansson. On mesoscopic equilibrium for linear statistics in Dyson's Brownian Motion, {\it Mem. Amer. Math. Soc.} {\bf 255} (1222) (2018).
 	
 	\bibitem{erdos+knowles} L. Erd\H{o}s, A. Knowles. The Altshuler-Shklovskii formulas for random band matrices I: the
 	unimodular case, {\it Comm. Math. Phys.} {\bf 333}(3), 1365-1416 (2015).
 	
 	\bibitem{erdos+knowles2} L. Erd\H{o}s, A. Knowles. The Altshuler-Shklovskii formulas for random band matrices II: the
 	general case, {\it Ann. H. Poincar\'e} {\bf 16}(3), 709-799 (2015).
 	
 	
\bibitem{EKY} L. Erd\H{o}s, A. Knowles and H.-T. Yau. Averaging fluctuations in resolvents of random band matrices, {\it Ann.\ Henri Poincar\'{e}} {\bf 14}(8), 1837-1926 (2013). 
	
 	
 	\bibitem{isotropic2}  L. Erd\H{o}s, T. Kr\"uger and D. Schr\"oder. Random Matrices with Slow Correlation Decay, {\it Forum of Mathematics, Sigma} {\bf 7}(8) (2019).
	
	 \bibitem{Cusp1} L. Erd\H{o}s, T. Kr\"uger and D. Schr\"oder. Cusp universality for random matrices I: Local law and the complex Hermitian case. Preprint arXiv:1809.03971 (2018).
	 
	 \bibitem{ESY} L. Erd\H{o}s, B. Schlein and  H.-T. Yau.  Universality of random matrices and local relaxation flow, {\it Invent. Math.} {\bf 185}(1), 75-119 (2011). 
	
	 \bibitem{SchnelliErdoes} L. Erd\H{o}s, K. Schnelli.  Universality for random matrix flows with time-dependent density, {\it Ann. Inst. Henri Poincar\'{e} Probab. Stat.} {\bf 53}(4), 1606-1656 (2017). 
	
	 \bibitem{bull_american_math_society} L. Erd\H{o}s, H.-T. Yau.  Universality of local spectral statistics of random matrices, {\it Bull. Amer. Math. Soc.} {\bf 49}(3), 377-414 (2012). 
		
 	\bibitem{book}  L. Erd\H{o}s, H.-T. Yau. {\it A dynamical approach to random matrix theory}, volume 28 of {\it Courant Lecture Notes}. American Mathematical Soc. (2017).
 	
	\bibitem{Hachem+Hardy+Najim_1} W. Hachem, A. Hardy, and J. Najim. Large complex correlated Wishart matrices: Fluctuations and asymptotic independence at the edges, {\it Ann. Probab.} {\bf 44}(3), 2264-2348 (2016).
	
	\bibitem{Hachem+Hardy+Najim_2} W. Hachem, A. Hardy, and J. Najim. Large complex correlated Wishart matrices: the Pearcey kernel and expansion at the hard edge, {\it Electron. J. Probab.} {\bf 21}(1), 36pp. (2016).
	
  	\bibitem{sparse} Y. He. Bulk eigenvalue fluctuations of sparse random matrices, Preprint, arXiv:1904.07140 (2019).	
 	
 	\bibitem{HeKnowlesDensityCorrelation} Y. He,  A. Knowles. Mesoscopic Eigenvalue Density Correlations of Wigner Matrices, {\it Probability Theory and Related Fields} {\bf 177}, 147-216 (2020).

 	\bibitem{moment} Y. He,  A. Knowles. Mesoscopic eigenvalue statistics of Wigner matrices, {\it Ann. Appl. Probab.} {\bf 27}(3), 1510-1550 (2017).
 	 	
 \bibitem{Huang+Landon}  J. Huang, B. Landon. Rigidity and a mesoscopic central limit theorem for Dyson Brownian
 motion for general beta and potentials, {\it Prob. Theory and Related Fields} {\bf 175}(1-2), 209-253 (2019).
	
 	\bibitem{global} H.\ C.\ Ji, J.\ O.\ Lee. Gaussian fluctuations for linear spectral statistics of deformed Wigner matrices, {\it Random Matrices: Theory and Applications}, 2050011 (2019).
 	
 	\bibitem{Johansson} K. Johansson. On fluctuations of eigenvalues of random Hermitian matrices, {\it Duke Math. J.} {\bf 91}(1), 151-204 (1998).
	
	 \bibitem{gumbel} K. Johansson. From Gumbel to Tracy-Widom, {\it Prob. Theory and Related Fields} {\bf 138}(1-2), 75-112 (2007).
	 
	\bibitem{Jonsson} D. Jonsson. Some limit theorems for the eigenvalues of a sample covariance matrix, {\it J. Multiva. Analy.} {\bf 12}(1), 1-38 (1982).	 
	
	\bibitem{edgecondition} N. El Karoui. Tracy-Widom limit for the largest eigenvalue of a large class of complex
sample covariance matrices, {\it Ann. Probab.} {\bf 35}(2), 663-714 (2007).	
		
	 \bibitem{isotropic}  A. Knowles, J. Yin. Anisotropic local laws for random matrices, {\it Prob. Theory and Related Fields} {\bf 169}(1-2), 257-352 (2017).
	 
	  \bibitem{character} B. Landon, P. Sosoe. Applications of mesoscopic CLTs in random matrix theory, Preprint, arXiv:1811.05915 (2018).
 	
 	\bibitem{character2} B. Landon, P. Sosoe and H.-T. Yau. Fixed energy universality of Dyson Brownian motion, {\it Advances in Math}, {\bf 346}, pp. 1137-1332 (2019).
	
 	\bibitem{LandonYau} B. Landon, H.-T. Yau. Convergence of Local Statistics of Dyson Brownian Motion, {\it Comm. Math. Phys.}, {\bf 355}(3), pp. 949-1000 (2017).

 	\bibitem{LandonYau_edge} B. Landon, H.-T. Yau. Edge statistics of Dyson Brownian motion, preprint, arXiv 1712.03881 (2017).

 	\bibitem{locallaw} J. O. Lee, K. Schnelli. Local deformed semicircle law and complete delocalization for Wigner matrices with random potential, {\it J. Math. Phys} {\bf 54}(10), 103504 (2013).
	
	\bibitem{edge} J. Oon Lee, K. Schnelli. Edge universality for deformed Wigner matrices, {\it Rev. Math. Phys} {\bf 27(8)}, 1550018 (2015).	

 	\bibitem{eigenvector} J. O. Lee, K. Schnelli. Extremal eigenvalues and eigenvectors of deformed Wigner matrices, {\it Probab. Theory Related Fields} {\bf 164}(1), 165-241 (2016).
	
	\bibitem{sample_lee_kevin} J. Oon Lee, K. Schnelli. Tracy-Widom distribution for the largest eigenvalue of real sample covariance matrices with general population, {\it Ann. Appl. Probab.} {\bf 26}(6), 3786-3839 (2016).	
	
 	\bibitem{bulk} J. O. Lee, K. Schnelli, B. Stetler and H.-T. Yau. Bulk universality for deformed Wigner matrices, {\it Ann. Probab.} {\bf 44}(3), 2349-2425 (2016).
	
 	\bibitem{mesowigner} A. Lodhia, N. J. Simm. Mesoscopic linear statistics of Wigner matrices, Preprint, arXiv:1503.03533 (2015).
 	
 	\bibitem{lytova+pastur} A. Lytova, L. Pastur. Central limit theorem for linear eigenvalue statistics of random
 	matrices with independent entries, {\it Ann. Prob.} {\bf 37}(5), 1778-1840 (2009).
	
	\bibitem{M+Pastur} V.A. Marchenko, L. Pastur. Distribution of Eigenvalues for Some Sets of Random Matrices, {\it Math. USSR Sb.} {\bf 1}(4), 457-483 (1967).		
 	
 	\bibitem{Min+Chen} C. Min, Y. Chen. Linear Statistics of Random Matrix Ensembles at the
 	Spectrum Edge Associated with the Airy Kernel, {\it Nuclear Physics B} {\bf 950}, 114836 (2020).
		
	\bibitem{secondfreeness} J. A. Mingo, R. Speicher. Second order freeness and fluctuations of random matrices: I. Gaussian and Wishart matrices and cyclic Fock spaces, {\it J. Func. Analy.} {\bf 235}(1), 226-270 (2006).
	
         \bibitem{Najim+Yao} J. Najim, J. Yao. Gaussian fluctuations for linear spectral statistics of large random covariance matrices, {\it Ann. Appl. Probab.} {\bf 26}(3), 1837-1887 (2016).
         
           \bibitem{O;rourke_Vu} S. O'Rourke, V. Vu. Universality of local eigenvalue statistics in random matrices with external source, {\it Random Matrices: Theory and Applications} {\bf 3}(2), 1450005 (2014).

 	\bibitem{solution} L.\ Pastur. The spectrum of random matrices, {\it Teor. Math. Phys} {\bf 10}, 64-74 (1972).
	
	\bibitem{Silverstein+Choi} J. W. Silverstein, S. I. Choi. Analysis of the Limiting Spectral Distribution of Large Dimensional Random Matrices, {\it J. Multiv. Anal.} {\bf 54}(2), 295-309 (1995).	
 	
 	\bibitem{M.Shcherbina} M. Shcherbina. Central limit theorem for linear eigenvalue statistics of the Wigner and sample covariance random matrices, {\it Zh. Mat. Fiz. Anal. Geom.} {\bf 7}(2), 176-192 (2011).
 	
 	\bibitem{edgesolution}  T. Shcherbina. On universality of local bulk regime for the deformed Gaussian unitary ensemble, {\it Zh. Mat. Fiz. Anal. Geom.} {\bf 5}(4), 396-433 (2009).
	
 	\bibitem{edgesolution2}  T. Shcherbina. On universality of local edge regime for the deformed Gaussian unitary ensemble, {\it J. Stat. Phys.} {\bf 143}, 455-481 (2011).

 	\bibitem{SosoeWong}P. Sosoe, P. Wong. Regularity conditions in the CLT for linear eigenvalue statistics of Wigner matrices. {\it Advances in Mathematics} {\bf 249}(20), 37-87 (2013)
 	
          \bibitem{Voi86} D. Voiculescu. Addition of certain non-commuting random variables, {\it J.\ Funct.\ Anal.} {\bf 66} (3), 323-346 (1986).

           \bibitem{freetimes2} D. Voiculescu. Multiplication of certain non-commuting random variables, {\it J. Operator Theory} \textbf{18}(2), 223-235 (1987).
                      
           \bibitem{Voi93} D. Voiculescu. The analogues of entropy and of Fisher's information theory in free probability theory, I, { \it Comm. Math. Phys.} {\bf155}, 71-92 (1993).

	\bibitem{freetimes} D. Voiculescu, K. J.  Dykema and A. Nica. {\it Free Random Variables: A Noncommutative Probability Approach to Free Products with Applications to Random Matrices, Operator Algebras and Harmonic Analysis on Free Groups}, American Mathematical Society (1992).
	
 	\bibitem{wigner_annals}  E. P. Wigner. Characteristic vectors of bordered matrices with infinite dimensions, {\it Ann. of Math. } {\bf 62} (3), 548-564 (1955).
	
\end{thebibliography}
\end{document}